\documentclass[pdflatex,sn-mathphys-num]{sn-jnl}

\usepackage{amsmath,amssymb,mathrsfs,bbm}
\usepackage{enumerate,setspace,xcolor,etoolbox}
\usepackage[nameinlink]{cleveref}

\makeatletter
\patchcmd{\@maketitle}{\vskip21pt}{\vskip0pt}{}{
  \PackageError{AMO-template}{Title spacing patch failed}{Use the supplied sn-jnl.cls.}}
\patchcmd{\@maketitle}{\vskip20pt}{\vskip10pt}{}{
  \PackageError{AMO-template}{Title spacing patch failed}{Use the supplied sn-jnl.cls.}}
\makeatother

\hypersetup{
  colorlinks=true,
  linkcolor=red,
  citecolor=blue,
  urlcolor=blue
}
\setcitestyle{numbers,square,comma}

\theoremstyle{thmstyleone}
\newtheorem{theorem}{Theorem}[section]
\newtheorem{lemma}[theorem]{Lemma}
\newtheorem{proposition}[theorem]{Proposition}

\theoremstyle{thmstylethree}
\newtheorem{definition}[theorem]{Definition}
\theoremstyle{thmstyletwo}
\newtheorem{example}[theorem]{Example}
\newtheorem{remark}[theorem]{Remark}

\crefname{subsection}{Subsection}{Subsections}
\crefname{theorem}{Theorem}{Theorems}
\crefname{example}{Example}{Examples}
\crefname{proposition}{Proposition}{Propositions}
\crefname{corollary}{Corollary}{Corollaries}
\crefname{definition}{Definition}{Definitions}
\crefname{lemma}{Lemma}{Lemmas}
\crefname{remark}{Remark}{Remarks}
\crefname{assumption}{Assumption}{Assumptions}
\crefname{problem}{Problem}{Problems}
\crefname{appendix}{Appendix}{Appendices}
\Crefname{appendix}{Appendix}{Appendices}

\AtBeginEnvironment{appendices}{\crefalias{section}{appendix}}

\begin{document}

\title{Multidimensional Infinite-Horizon Markovian Backward Stochastic Differential Equations with Continuous Coefficients and Nonzero-Sum Differential Games}

\author[1]{\fnm{Sa\"{i}d} \sur{Hamad\`{e}ne}}
\author*[2]{\fnm{Hongyu} \sur{Shi}}
\author[3]{\fnm{Zhen} \sur{Wu}}

\affil[1]{\orgdiv{Laboratoire Manceau de Math\'ematiques},
  \orgname{Le Mans Universit\'e},
  \orgaddress{\city{Le Mans}, \postcode{72085}, \country{France}}}
\affil[2]{\orgdiv{School of Mathematics},
  \orgname{Shandong University},
  \orgaddress{\city{Jinan}, \postcode{250100}, \country{China}}}
\affil[3]{\orgdiv{School of Mathematics, State Key Laboratory of Cryptography and Digital Economy Security},
  \orgname{Shandong University},
  \orgaddress{\city{Jinan}, \postcode{250100}, \country{China}}}

\abstract{
This paper addresses the solvability of multidimensional infinite-horizon Markovian backward stochastic differential equations with continuous coefficients. We do not require the Lipschitz property or other quantitative controls of differences, and each component of the generator is allowed to depend simultaneously on all components of the multidimensional backward variables. A local $L^q$-domination framework adapted to the infinite-horizon setting is developed and combined with our linear flow conjugacy analysis to obtain the required distributional control in the presence of unbounded linear drift. We also derive a common Markov representation valid for the entire family of initial states. The analytical obstacles arising from weak regularity and the multidimensional structure are thereby overcome. Furthermore, we consider a class of infinite-horizon nonzero-sum differential games with a general non linear structure, allowing player controls to be fully coupled and permitting payoff criteria that are not restricted to an exponential discount form, establish an equivalence between the game and the corresponding multidimensional infinite-horizon backward stochastic differential equations, and prove the existence of a Nash equilibrium.
}
\keywords{Multidimensional infinite-horizon backward stochastic differential equations, Continuous coefficients, Local $L^q$-domination, Markov representation, Nonzero-sum differential games, Nash equilibrium
}
\pacs[Mathematics Subject Classification]{60H10,91A15,91A10
}

\maketitle
\begingroup
  \renewcommand{\thefootnote}{\fnsymbol{footnote}}
  \footnotetext[1]{\small Corresponding author: Hongyu Shi.
  E-mail: \href{mailto:202112026@mail.sdu.edu.cn}{202112026@mail.sdu.edu.cn}.}
\endgroup

\section{Introduction}
Since Pardoux and Peng \cite{PP1990} established the well-posedness theory for backward stochastic differential equations (BSDEs) under the Lipschitz condition, BSDE theory has become an important probabilistic tool in stochastic control, stochastic differential games, mathematical finance, and related fields. In this paper, we mainly consider a class of multidimensional infinite-horizon Markovian BSDEs driven by diffusion processes. More precisely, for any initial pair $(t,a)\in[0,\infty)\times\mathbb{R}^{n}$, consider the stochastic differential equation (SDE)
\begin{equation}
    \label{eq01_1}
    x^{t,a}_{s} = a+\int^s_t(Ax^{t,a}_{r}+F(r,x^{t,a}_{r}))\mathrm{d}r+\int^s_t \sigma(r,x^{t,a}_{r})\mathrm{d}W_{r},
\end{equation}
together with the associated infinite-horizon Markovian BSDE
\begin{equation*}
    y^{t,a}_s=\int^\infty_s f(r,x^{t,a}_r,y^{t,a}_r,z^{t,a}_r)\mathrm{d}r-\int^\infty_s z^{t,a}_r\mathrm{d}W_r,
\end{equation*}
where we focus on the multidimensional case i.e. $y^{t,a}_\cdot$ takes values in $\mathbb{R}^{m}$, without imposing any special structure, such as a diagonal form, on the generator $f$. In contrast to the classical Lipschitz theory, we require only that $f$ be continuous with respect to $(y,z)$ and satisfy the time-inhomogeneous polynomial growth condition
\begin{equation*}
    |f(s,x,y,z)|\leq\beta_{1,s}(1+|x|^p+|y|)+\beta_{2,s}|z|.
\end{equation*}
Here, $\beta_{1,\cdot}$ and $\beta_{2,\cdot}$ satisfy integrability conditions suitable for the infinite-horizon setting. In particular, no Lipschitz condition or other form of quantitative control on differences with respect to $(y,z)$ is imposed on $f$. Within this weak regularity framework, we investigate the existence of solutions to multidimensional infinite-horizon BSDEs and further apply the results to the corresponding infinite-horizon nonzero-sum differential games.

A series of studies have addressed the solvability of infinite-horizon BSDEs, but they generally require the generator to satisfy Lipschitz conditions or quantitative restrictions on its differences. Chen \cite{C2000} considered infinite-horizon BSDEs under a non uniform Lipschitz condition, where he assumed that
\begin{equation*}
    |g(s,y_1,z_1)-g(s,y_2,z_2)|\leq \zeta_{1,s}|y_1-y_2|+\zeta_{2,s}|z_1-z_2|.
\end{equation*}
where $\zeta_{1,\cdot}$ (resp. $\zeta_{2,\cdot}$) is deterministict and integrable (resp. square integrable). Fan and Jiang \cite{FJ2012} investigated the $L^p$ solvability of multidimensional infinite-horizon BSDEs and generalized the above deterministic Lipschitz condition to a more general difference condition, while still imposing quantitative restrictions on differences of the generator. When the regularity conditions on the generator are weakened, infinite-horizon results become restricted by the dimension or the structure of the model. Zong and Hu \cite{ZH2016} considered infinite-horizon BSDEs under non-Lipschitz conditions, but their proof relied on the one-dimensional comparison principle to construct and control the approximation sequence. Since the comparison theorem is not valid for general multidimensional BSDEs, their result is confined to the one-dimensional case, and this method cannot be extended to multidimensional setting. Let us also mention the paper by Hu, Liang, and Tang \cite{HLT2020} where a multidimensional infinite-horizon BSDE is considered in the case when the generators have the following forms: 
\begin{equation*}
    g_i(x,y,z)=g_i(x,y,z_i),\qquad i=1,\cdots,m,
\end{equation*}
i.e. $g_i(x,y,z)$ depends only the component $z_i$ of $z$ and then this componentwise separated structure allows the multidimensional comparison theorem to be applied to the analysis of the system, but it does not apply to the fully coupled case in which each component of the generator depends simultaneously on the entire vector $z=(z_1,\cdots,z_m)^{\top}$. Li, Zhang, and Fan \cite{LZF2025} examined weighted solutions of multidimensional BSDEs with a random unbounded terminal time, but still required the generator to satisfy a stochastic monotonicity condition in $y$ and a Lipschitz condition in $z$. Other studies on BSDEs over infinite time intervals can be found in \cite{LLW2025,JL2023,LXF2021}. Consequently, existing results for multidimensional infinite-horizon BSDEs either require the generator to satisfy assumptions such as Lipschitz continuity or rely on the comparison principle and possibly specific component structures. Therefore, under a weak regularity framework without the above deterministic Lipschitz property, the solvability of multidimensional infinite-horizon BSDEs with a general structure still faces substantial difficulties. To the best of our knowledge, this remains an open problem.

For the case where the generator coefficients are continuous and the backward variables are multidimensional, related progress has been made in the finite-horizon setting. Hamad\`{e}ne, Lepeltier, and Peng \cite{HLP1997} established existence results for multidimensional finite-horizon Markovian BSDEs whose generator is continuous with respect to $(y,z)$ and satisfies a polynomial growth condition. In order to obtain the distributional properties of the forward state through the classical Aronson estimates \cite{A1967}, they required the drift and diffusion coefficients of the SDE to satisfy boundedness conditions. For an infinite-horizon problem, however, one first needs to ensure the long-term stability of \eqref{eq01_1}, namely, $\sup_{s\geq 0}\mathbb{E}|x_s^{t,a}|^p<\infty$ (see \Cref{le01}). Therefore, a dissipativity condition is usually needed on the forward dynamics; related studies can be found in \cite{FHT2009, GT2020}. More precisely we impose the existence of an $\alpha >0$ such that
\begin{equation*}
    \langle x,Ax\rangle\leq-\alpha|x|^2,\qquad \forall x\in \mathbb{R}^n.
\end{equation*}
Consequently, the drift term of \eqref{eq01_1} necessarily contains the unbounded linear term $Ax^{t,a}_{\cdot}$, so that the Aronson estimates for bounded coefficients cannot be applied directly. To address this issue, we introduce an adaptation of the local $L^q$-domination according to the characteristics of infinite-horizon problems (see \Cref{de21}) and develop a transition density analysis method based on linear flow conjugacy (see \Cref{le22}), thereby overcoming the essential difficulty caused by the unbounded linear drift term. In addition, in the existence analysis of BSDEs with multidimensional continuous coefficients, it is also necessary to characterize a common Markov representation of $(y^{t,a}_{\cdot},z^{t,a}_{\cdot})$ in terms of $x^{t,a}_{\cdot}$. El Karoui, Peng, and Quenez \cite{EPQ1997} established a Markov representation of finite-horizon BSDE solutions in terms of the forward state. However, a common Markov representation for general multidimensional infinite-horizon BSDEs is still not covered by existing theory. We prove this extension in \Cref{th04} and we moreover show that the deterministic measurable functions are Borel, i.e., for any $(t,a)\in [0,\infty]\times \mathbb{R}^n$, $y^{t,a}_s=\gamma_1(s,x_s^{t,a})$ and $z^{t,a}_s=\gamma_2(s,x_s^{t,a})$ where $\gamma_1$ and $\gamma_2$ are Borel. This result provides a common state space representation for the convergence analysis of the approximating sequence of generators.

An important motivation for the multidimensional infinite-horizon BSDE theory considered in this paper comes from a class of infinite-horizon nonzero-sum differential games. We consider the case of two players. If there are more than two players, the corresponding extension is direct. Consider the state process:
\begin{equation*}
X_s=\tilde a+\int_0^s\big(AX_r+F(r,X_r)\big)\,\mathrm{d}r+\int_0^s\sigma(r,X_r)\,\mathrm{d}W_r.
\end{equation*}
The corresponding cost functional of player $i$ is given by:
\begin{equation*}
J_i(u^1,u^2)=\mathbb{E}^{u^1,u^2}\left[\int_0^\infty l_i(s,X_s,u_s^1,u_s^2)\,\mathrm{d}s\right], \quad i=1,2,
\end{equation*}
where $\mathbb{E}^{u^1,u^2}$ denotes the expectation under the probability measure corresponding to the controls $(u^1_\cdot,u^2_\cdot)$. Finite-horizon nonzero-sum stochastic differential games have been systematically investigated from different perspectives (see \cite{HLP1997,EKH2003,BCR2004,M2004,L2012,HM2015,HM2020,HM2021} and the references therein). By contrast, infinite-horizon nonzero-sum differential games have received relatively less attention. Existing studies mostly employ relaxed controls subject to stationarity restrictions, rely on settings in which the players' controls have additively separable structures, and mainly consider discounted or long-run average payoff criteria \cite{GSPP2023,GSPP2021,GSK2005,JLE2017,GP2022,SK2008}. Under the classical control framework, studies of infinite-horizon nonzero-sum stochastic differential games have mainly focused on linear-quadratic models \cite{LSY2021,WLXZ2026,WLZ2026}. However, the case of a general non linear structure has not yet been covered by the existing literature and remains to be addressed. Therefore, within the classical control framework, we consider a class of infinite-horizon nonzero-sum differential games with a more general non linear structure, in which the payoff criterion is not restricted to an exponential discount form (see \Cref{re61_1}) and players controls are allowed to be fully coupled. At the same time, we establish a connection between the game problem and the following infinite-horizon BSDE:
\begin{equation*}
\bar Y_s^i=\int_s^\infty H_i\left(r,X_r,\bar Z_r^i,\bar u^1(r,X_r,\bar Z_r^1,\bar Z_r^2),\bar u^2(r,X_r,\bar Z_r^1,\bar Z_r^2)\right)\,\mathrm{d}r-\int_s^\infty \bar Z_r^i\,\mathrm{d}W_r, \quad i=1,2,
\end{equation*}
where $H_i$, $i=1,2$, are the hamiltonians associated with the game and $\bar u_i$, $i=1,2$, are the functions that satisfy the generalized Isaac's condition. More precisely, the existence of a Nash equilibrium is equivalent to the existence of a solution to the above BSDE. This BSDE is multidimensional, and since $(\bar Z^1_\cdot,\bar Z^2_\cdot)$ are fully coupled, its generator $H_i$ generally does not satisfy the deterministic Lipschitz condition (see \Cref{ex62}). Therefore, by using the result of \Cref{th24}, we establish a Nash equilibrium for the game (see \Cref{th33}).

The main contributions and novelties of this paper are:

(i) We establish a more general theory of infinite-horizon BSDEs. The study of multidimensional infinite-horizon BSDEs is extended to a weak regularity framework with continuous coefficients, without requiring the generator to satisfy a deterministic Lipschitz condition or any other quantitative control on differences. At the same time, the theory removes the restrictions imposed by one-dimensional settings and special component structures, and allows general fully coupled structures in multidimensional systems.

(ii) To overcome the convergence difficulties arising for multidimensional infinite-horizon BSDEs under weak regularity conditions, we obtain the distributional control required in the presence of an unbounded linear drift through the combined use of a local $L^q$-domination framework established here and the linear flow conjugacy analysis developed in this paper. In addition, we prove a common Markov representation for all initial pairs, which enables the approximating sequence of generators to be compared and passed to the limit within a common state space framework.

(iii) We consider a class of infinite-horizon nonzero-sum differential games with a general non linear structure, in which the model is not restricted to an exponential discount form and the controls of the players are allowed to be mutually coupled. Moreover, we establish an equivalence between the game problem and the multidimensional infinite-horizon BSDEs under weak regularity considered in this paper, and prove the existence of a Nash equilibrium.

The remainder of the paper is organized as follows. \Cref{se2} introduces the basic notation and function spaces used throughout the paper. Infinite-horizon SDEs are considered in \Cref{se3}, where long-time estimates for the forward state process are established. \Cref{se4} is devoted to infinite-horizon Markovian BSDEs and their common Markov representation. Multidimensional infinite-horizon BSDEs with continuous coefficients are investigated in \Cref{se5}, where the main existence result of this paper is presented. In \Cref{se6}, the above theory is applied to infinite-horizon nonzero-sum differential games, and the existence of the corresponding Nash equilibrium is proved. Finally, \Cref{se7} concludes the paper. Some technical results are proved in \hyperref[app]{Appendix}.

\section{Notations}
\label{se2}
Throughout this paper, $\{W_t\}_{t\geq 0}$ denote an $n$-dimensional Brownian motion, defined on a filtered complete probability space $(\Omega, \mathcal{F}, \mathbb P)$. For $t\geq 0$, $\mathcal{F}_t$ is the natural filtration generated by $\{W_s,0\leq s\leq t\}$ augmented by all $\mathbb P$-null set $\mathcal{N}$. $D^{\top}$ (resp. $D^{-1}$) denotes the transposition (resp. inverse) of $D$, and $\operatorname{Tr}(D)$ denotes the trace of $D$. We denote by $\mathbb{R}^{m}$ and $\mathbb{R}^{m\times n}$ the $m$-dimensional Euclidean space and the space of real $m\times n$ matrices, respectively, and use $|\cdot|$ and $\langle\cdot,\cdot\rangle$ to denote the corresponding norms and inner products. For $x=(x_i),y=(y_i)\in\mathbb{R}^{m}$, we define $|x|=\sqrt{\sum_{i=1}^{m}x_i^{2}}$ and $\langle x,y\rangle=\sum_{i=1}^{m}x_i y_i$; for $A=(a_{ij}),B=(b_{ij})\in\mathbb{R}^{m\times n}$, we define $|A|=\sqrt{\sum_{i=1}^{m}\sum_{j=1}^{n}a_{ij}^{2}}$ and $\langle A,B\rangle=\sum_{i=1}^{m}\sum_{j=1}^{n}a_{ij}b_{ij}$.

For a given Hilbert space $\mathbb H$, $p>0$ and $t\geq 0 $, $L^p_{\mathcal{F}}(t,\infty;\mathbb{H})$ denotes the space of all $\mathbb H$-valued, $\mathcal{F}_t$-adapted processes $\varphi_\cdot$ such that $\mathbb{E}\int^{\infty}_t|\varphi_s|^p\mathrm{d}s<\infty$; $H^p_{\mathcal{F},loc}(t,\infty;\mathbb{H})$ denotes the space of all $\mathbb H$-valued, $\mathcal{F}_t$-adapted c\`adl\`ag processes $\varphi_\cdot$ such that, for any $T>t$, $ \sup_{t\leq s\leq T}\mathbb{E}[|\varphi_s|^p] < \infty$; $H^p_{\mathcal{F}}(t,\infty;\mathbb{H})$ denotes the space of all $\mathbb H$-valued, $\mathcal{F}_t$-adapted c\`adl\`ag processes $\varphi_\cdot$ such that $ \sup_{s\geq t}\mathbb{E}[|\varphi_s|^p] < \infty$; $S^p_{\mathcal{F}}(t,\infty;\mathbb{H})$ denotes the space of all $\mathbb H$-valued, $\mathcal{F}_t$-adapted c\`adl\`ag processes $\varphi_\cdot$ such that $ \mathbb{E}[\sup_{s\geq t}|\varphi_s|^p] < \infty$.

\section{Infinite-Horizon SDE}
\label{se3}
For any $(t,a) \in [0,\infty) \times \mathbb{R}^n$, we consider the following SDE:
\begin{equation}
    \label{eq01}
    \left\{
        \begin{aligned}
            &x^{t,a}_{s} = a+\int^s_t(Ax^{t,a}_{r}+F(r,x^{t,a}_{r}))\mathrm{d}r+\int^s_t \sigma(r,x^{t,a}_{r}) \mathrm{d}W_{r},\quad s > t,\\
            &x^{t,a}_{s} = a,\quad 0\leq s \leq t,
        \end{aligned}
    \right.
\end{equation}
where $A \in \mathbb{R}^{n\times n}$ and $(F,\sigma):[0,\infty) \times \mathbb{R}^n \rightarrow \mathbb{R}^n\times \mathbb{R}^{n\times n}$ are Borel measurable functions. Hereafter we require the following assumptions.

\noindent \textbf{Assumption 1.} (i) $A$ is dissipative, that is, there exists a constant $\alpha>0$ such that $\langle x, Ax \rangle \le -\alpha |x|^2$ for all $x \in \mathbb{R}^n$.

\noindent (ii) $F$ is uniformly bounded and Lipschitz continuous with respect to $x$.

\noindent (iii) $\sigma$ is invertible, and both $\sigma$ and $\sigma^{-1}$ are uniformly bounded. Moreover, $\sigma$ is Lipschitz continuous with respect to $x$.

\begin{remark}
    \label{re01}
    By Assumption 1-(iii), there exists a constant $C>0$ such that, for any $(s,x) \in [0,\infty) \times \mathbb{R}^n$,
    \begin{equation*}
        C I_{n\times n} \leq \sigma(s,x)\sigma^{\top}(s,x) \leq \frac{1}{C}I_{n\times n},
    \end{equation*}
    where $I_{n\times n}$ denotes the $n\times n$-dimensional identity matrix.
\end{remark}

By Theorem 7.5 in \cite{GJ1992}, under Assumption 1, for any $p>0$, \eqref{eq01} admits a unique solution $x^{t,a}_{\cdot}\in H^p_{\mathcal{F},loc}(0,\infty;\mathbb{H})$. Next, we derive an estimate for $x^{t,a}_{\cdot}$, which also shows that $x^{t,a}_{\cdot}\in H^p_{\mathcal{F}}(0,\infty;\mathbb{R}^n)$.

\begin{theorem}
    \label{le01}
    Suppose that Assumption 1 holds. Then, for any $p\geq 0$,
    \begin{equation}\label{estimxa}
        \sup_{s\geq 0}\mathbb{E} |x^{t,a}_s|^p\leq C_p(1+|a|^p).
    \end{equation}
\end{theorem}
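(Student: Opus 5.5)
The plan is to apply Itô's formula to a smooth power of the norm and let the dissipativity of $A$ (Assumption 1-(i)) give a uniform-in-time Gronwall-type bound. It suffices to treat $p\ge 2$. For $0\le p<2$ the case $p=0$ is trivial, and otherwise Jensen's inequality gives $\mathbb{E}|x_s|^p\le (\mathbb{E}|x_s|^2)^{p/2}\le (C_2(1+|a|^2))^{p/2}\le C_p(1+|a|^p)$. For $s\le t$ we have $x^{t,a}_s=a$, so only $s\ge t$ matters. Write $x_s=x^{t,a}_s$ and apply Itô's formula on $[t,s]$ to $e^{\lambda s}|x_s|^p$, with $\lambda>0$ to be chosen. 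Since $p\ge2$, the function $|x|^p$ is $C^2$. Its derivative in $s$ contains the terms $\lambda|x|^p$, $p|x|^{p-2}\langle x,Ax+F\rangle$, and
\[
\tfrac p2|x|^{p-2}|\sigma|^2+\tfrac{p(p-2)}2|x|^{p-4}|\sigma^\top x|^2,
\]
plus a local martingale.

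The key pointwise estimate uses $\langle x,Ax\rangle\le-\alpha|x|^2$ together with the boundedness of $F$ and $\sigma$ (Assumption 1-(ii),(iii)). These give
\[
p|x|^{p-2}\langle x,Ax+F\rangle+\tfrac{p(p-1)}2|x|^{p-2}|\sigma|^2\le -p\alpha|x|^p+C|x|^{p-1}+C|x|^{p-2}.
\]
Young's inequality then bounds the last two terms by $\tfrac{p\alpha}{2}|x|^p+C_p$. Choosing $\lambda=p\alpha/2$, the drift of $e^{\lambda s}|x_s|^p$ is bounded by $C_pe^{\lambda s}$.

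To remove the stochastic integral, localize with $\tau_N=\inf\{r\ge t:|x_r|\ge N\}$. The stopped stochastic integral is then a true martingale, because its integrand is bounded by $pN^{p-1}\|\sigma\|_\infty e^{\lambda r}$. Taking expectations yields
\[
\mathbb{E}\big[e^{\lambda (s\wedge\tau_N)}|x_{s\wedge\tau_N}|^p\big]\le e^{\lambda t}|a|^p+C_p\int_t^s e^{\lambda r}\,\mathrm{d}r .
\]
Since $x$ has continuous paths, $\tau_N\uparrow\infty$, and Fatou's lemma gives the same bound for $\mathbb{E}[e^{\lambda s}|x_s|^p]$. Alternatively one can use $x\in H^p_{\mathcal F,loc}$ directly. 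Multiplying by $e^{-\lambda s}$ gives
\[
\mathbb{E}|x_s|^p\le e^{-\lambda(s-t)}|a|^p+\frac{C_p}{\lambda}\le C_p(1+|a|^p),
\]
uniformly in $s\ge 0$, which is \eqref{estimxa}.

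The main point to get right is the uniformity in time. A naive Gronwall argument produces constants growing like $e^{Cs}$, so the dissipativity must absorb all the lower-order terms $|x|^{p-1}$ and $|x|^{p-2}$ coming from $F$ and $\sigma$. This is exactly what the Young-inequality step with the exponential weight $e^{\lambda s}$ achieves. The localization is routine.
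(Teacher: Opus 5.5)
Your proposal is correct and takes essentially the same route as the paper: It\^{o}'s formula for $|x^{t,a}_s|^p$ with $p\ge 2$, then dissipativity plus Young's inequality to bound the drift by $-\frac{\alpha p}{2}|x^{t,a}_s|^p+C_p$ uniformly in time, and H\"{o}lder's (Jensen's) inequality to reduce $0<p<2$ to $p=2$. The differences are matters of rigor, not strategy: your integrating factor $e^{\lambda s}$ with $\lambda=\frac{\alpha p}{2}$ is the integrated form of the paper's differential Gronwall lemma, and your localization by $\tau_N$ with Fatou's lemma justifies the step where the paper takes expectations of the stochastic integral directly.
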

\begin{proof}
    When $p=0$, the conclusion holds trivially. Thus, we consider the case where $p>0$. 

    \textbf{Step 1.} Let $p\geq 2$. For any $s\geq t$, applying It\^{o}'s formula to $|x^{t,a}_s|^p$ yields
    \begin{equation}
        \label{eq02_1}
        \mathbb{E}|x^{t,a}_s|^p = |a|^p + \int^s_t\mathbb{E}[\Phi_r]\mathrm{d}r,
    \end{equation}
    where
    \begin{equation*}
        \begin{aligned}
            & \Phi_s = p|x^{t,a}_s|^{p-2}\langle x^{t,a}_s, Ax^{t,a}_s + F(r, x^{t,a}_s) \rangle \\
            &\qquad+ \frac{1}{2}\mathrm{Tr}\left(\sigma^\top(r, x^{t,a}_s) \big(p|x^{t,a}_s|^{p-2}I_{n\times n} + p(p-2)|x^{t,a}_s|^{p-4}x^{t,a}_s (x^{t,a}_s)^\top\big)\sigma(r, x^{t,a}_s)\right).
        \end{aligned}
    \end{equation*}
    By Assumption 1, it is straightforward to obtain 
    \begin{equation*}
        \Phi_s \leq -\alpha p|x^{t,a}_s|^p + \hat C_p|x^{t,a}_s|^{p-2}\big(|x^{t,a}_s|+1\big),
    \end{equation*}
    where $\hat C_p>0$. By Young's inequality, for any $\epsilon >0$,
    \begin{equation*}
        |x^{t,a}_s|^{p-2}\big(|x^{t,a}_s|+1\big)\leq 2\epsilon |x^{t,a}_s|^{p} +\hat C_{p,\epsilon},
    \end{equation*}
    where $\hat C_{p,\epsilon}>0$. Taking $\epsilon = \frac{\alpha p}{4\hat C_p}$ gives
    \begin{equation}
        \label{eq03_1}
        \Phi_s \leq -\frac{\alpha p}{2}|x^{t,a}_s|^p + C_p,
    \end{equation}
    where $C_p>0$. In what follows, with a slight abuse of notation, $C_p$ denotes a generic positive constant whose value may vary from line to line. Combining \eqref{eq02_1} and \eqref{eq03_1}, we obtain
    \begin{equation*}
        \frac{\mathrm{d}}{\mathrm{d}s} \mathbb{E}|x^{t,a}_s|^p = \mathbb{E}[\Phi_s] \leq  -\frac{\alpha p}{2}\mathbb{E}|x^{t,a}_s|^p + C_p,\quad s\geq t.
    \end{equation*}
    Therefore, applying the differential form of Gronwall's inequality (see \cite{ST2014}, Lemma 3.2), we deduce that, for any $s\geq t$,
    \begin{equation*}
        \mathbb{E}|x^{t,a}_s|^p \le \mathrm{e}^{-\frac{\alpha p}{2}(s-t)}|a|^p + C_p\int_t^s \mathrm{e}^{-\frac{\alpha p}{2}(s-r)}\mathrm{d}r \leq |a|^p + \frac{2C_p}{\alpha p}.
    \end{equation*}
    This implies that $\sup_{s\geq t}\mathbb{E} |x^{t,a}_s|^p\leq C_p(1+|a|^p)$. Combining this estimate with \eqref{eq01}, we further obtain%
    \begin{equation*}
        \sup_{s\geq 0}\mathbb{E} |x^{t,a}_s|^p\leq C_p(1+|a|^p).
    \end{equation*}

    \textbf{Step 2.} Let $0<p<2$. By H\"{o}lder's inequality,
    \begin{equation*}
        \mathbb{E}|x^{t,a}_s|^p \le (\mathbb{E}|x^{t,a}_s|^2)^{\frac{p}{2}}.
    \end{equation*}
    Step 1 ensures the existence of a constant $C>0$ such that $\sup_{s\geq 0}\mathbb{E} |x^{t,a}_s|^2 \le C(1+|a|^2)$. It follows that
    \begin{equation*}
        \sup_{s\geq 0}\mathbb{E} |x^{t,a}_s|^p \le [C(1+|a|^2)]^{\frac{p}{2}} \leq C_p(1+|a|^p).
    \end{equation*}
    This completes the proof.
\end{proof}

\section{Infinite-Horizon Markovian BSDE}
\label{se4}
For any $(t,a) \in [0,\infty) \times \mathbb{R}^n$, we introduce the following infinite-horizon BSDE:
\begin{equation}
    \label{eq02}
    y^{t,a}_s = \int^{\infty}_s f(r,x^{t,a}_r,y^{t,a}_r,z^{t,a}_r)\mathrm{d}r-\int^{\infty}_sz^{t,a}_r\mathrm{d}W_r, \quad s\geq 0,
\end{equation}
where $f:[0,\infty) \times \mathbb{R}^n \times \mathbb{R}^m \times \mathbb{R}^{m\times n} \rightarrow \mathbb{R}^m$ is a Borel measurable function and $x^{t,a}_\cdot$ is the solution to \eqref{eq01}. We impose the following assumptions.

\noindent \textbf{Assumption 2.} (i) For any $(t,a) \in [0,\infty) \times \mathbb{R}^n$, $\mathbb{E}(\int^{\infty}_0|f(s,x^{t,a}_s,0,0)|\mathrm{d}s)^2<\infty$.

\noindent (ii) There exist two functions $\zeta_1,\zeta_2:[0,\infty)\to (0,\infty)$ satisfying $\int^{\infty}_0\zeta_{1,s}\mathrm{d}s<\infty$ and $\int^{\infty}_0\zeta^2_{2,s}\mathrm{d}s<\infty$, such that, for any $(s,x,y_i,z_i) \in [0,\infty)\times \mathbb{R}^n \times \mathbb{R}^m \times \mathbb{R}^{m\times n}$, $i=1,2$,
\begin{equation*}
    |f(s,x,y_1,z_1)-f(s,x,y_2,z_2)|\leq \zeta_{1,s}|y_1-y_2| + \zeta_{2,s}|z_1-z_2|.
\end{equation*} 
 
By Proposition 3.3 in \cite{FJ2012}, under Assumptions 1 and 2, \eqref{eq02} admits a unique solution $(y^{t,a}_{\cdot},z^{t,a}_{\cdot}) \in S^2_{\mathcal{F}}(0,\infty;\mathbb{R}^m) \times L^2_{\mathcal{F}}(0,\infty;\mathbb{R}^{m\times n})$. 

\begin{remark}
    \label{re41}
    Notice that \eqref{eq02} contains no terminal condition. In fact, by Assumption 2 and $(y^{t,a}_{\cdot},z^{t,a}_{\cdot}) \in S^2_{\mathcal{F}}(0,\infty;\mathbb{H}) \times L^2_{\mathcal{F}}(0,\infty;\mathbb{H})$, for any $T>0$, we have 
    \begin{equation*}
        \begin{aligned}
            &\quad \text{ }\mathbb{E}\left(\int^{\infty}_T\big|f(s,x^{t,a}_s,y^{t,a}_s,z^{t,a}_s)\big|\mathrm{d}s\right)^2\\
            &\leq 3\mathbb{E}\left(\int^{\infty}_T\big|f(s,x^{t,a}_s,0,0)\big|\mathrm{d}s\right)^2+3\mathbb{E}\left(\int^{\infty}_T\zeta_{1,s}|y^{t,a}_s|\mathrm{d}s\right)^2+3\mathbb{E}\left(\int^{\infty}_T\zeta_{2,s}|z^{t,a}_s|\mathrm{d}s\right)^2\\
            &\leq 3\mathbb{E}\left(\int^{\infty}_T\big|f(s,x^{t,a}_s,0,0)\big|\mathrm{d}s\right)^2+3\left(\int^{\infty}_T\zeta_{1,s}\mathrm{d}s\right)^2\mathbb{E}\left[\sup_{s\geq 0}|y^{t,a}_s|^2\right]\\
            &\qquad+3\left(\int^{\infty}_T\zeta^2_{2,s}\mathrm{d}s\right)\mathbb{E}\int^{\infty}_0|z^{t,a}_s|^2\mathrm{d}s.
        \end{aligned}
    \end{equation*}
    Moreover, it follows directly from \eqref{eq02} that $y^{t,a}_T = \mathbb{E}[\int^{\infty}_T f(s,x^{t,a}_s,y^{t,a}_s,z^{t,a}_s)\mathrm{d}s|\mathcal{F}_T]$. Therefore, 
    \begin{equation*}
        \lim_{T\to \infty} \mathbb{E}|y^{t,a}_T|^2 \leq \lim_{T\to \infty} \mathbb{E}\left(\int^{\infty}_T\big|f(s,x^{t,a}_s,y^{t,a}_s,z^{t,a}_s)\big|\mathrm{d}s\right)^2 = 0.
    \end{equation*}
\end{remark}

We now establish an estimate for $(y^{t,a}_{\cdot},z^{t,a}_{\cdot})$.

\begin{lemma}
    \label{le02}
    Suppose that Assumptions 1 and 2 hold. Assume further that there exist two functions $\beta_1,\beta_2:[0,\infty)\to (0,\infty)$ such that, for any $(s,x,y,z) \in [0,\infty)\times \mathbb{R}^n \times \mathbb{R}^m \times \mathbb{R}^{m\times n}$,
    \begin{equation*}
        |f(s,x,y,z)|\leq \beta_{1,s}(1+|x|^p+|y|)+\beta_{2,s}|z|,
    \end{equation*}
    where $p\geq 0$, $\int^{\infty}_0\beta_{1,s}\mathrm{d}s<\infty$, and $\int^{\infty}_0\beta^2_{2,s}\mathrm{d}s<\infty$. Then,
    \begin{equation*}
        \mathbb{E}\left[\sup_{s\geq 0}|y^{t,a}_s|^2\right]+\mathbb{E}\int^{\infty}_0|z^{t,a}_s|^2\mathrm{d}s \leq C_p(1+|a|^{2p}).
    \end{equation*}
\end{lemma}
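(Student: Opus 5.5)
Since the generator is not Lipschitz-controlled by the same functions as the growth bound, I will derive the estimate directly from the equation by It\^{o}'s formula together with a time-weighted change of variables. This absorbs the integrable coefficients $\beta_{1,\cdot}$ and $\beta_{2,\cdot}^2$. The existence of $(y^{t,a}_\cdot,z^{t,a}_\cdot)\in S^2_{\mathcal F}\times L^2_{\mathcal F}$ is already known from Proposition 3.3 in \cite{FJ2012}, so only the a priori bound is needed. By \Cref{re41}, $\mathbb{E}|y^{t,a}_T|^2\to0$ as $T\to\infty$.

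\textbf{Weighted It\^{o} expansion.} Set $B_s:=\int_0^s(2\beta_{1,r}+2\beta_{2,r}^2)\,\mathrm{d}r$, which is bounded by a constant $K$ by assumption. Apply It\^{o}'s formula to $\mathrm{e}^{B_s}|y^{t,a}_s|^2$ on $[s,T]$. The growth condition and Young's inequality give
\[
2\langle y,f\rangle\le 2\beta_{1,r}|y|(1+|x|^p)+2\beta_{1,r}|y|^2+2\beta_{2,r}^2|y|^2+\tfrac12|z|^2 .
\]
The weight $\mathrm{e}^{B}$ cancels the $|y|^2$ terms, and we obtain
\[
\mathrm{e}^{B_s}|y_s|^2+\tfrac12\int_s^T \mathrm{e}^{B_r}|z_r|^2\mathrm{d}r\le \mathrm{e}^{B_T}|y_T|^2+2\int_s^T \mathrm{e}^{B_r}\beta_{1,r}|y_r|(1+|x_r|^p)\mathrm{d}r-2\int_s^T \mathrm{e}^{B_r}\langle y_r,z_r\mathrm{d}W_r\rangle .
\]
Since $1\le \mathrm{e}^{B}\le \mathrm{e}^K$, the weights can be dropped at the cost of constants.

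\textbf{Sup and BDG.} Take $\sup_{s\in[0,T]}$ and then expectations. The stochastic integral is a true martingale because $y\in S^2$ and $z\in L^2$. By the Burkholder--Davis--Gundy inequality it is bounded by
\[
\epsilon\,\mathbb{E}\sup|y|^2+C_\epsilon\,\mathbb{E}\int_0^T|z|^2 .
\]
The middle term is bounded by
\[
\epsilon\,\mathbb{E}\sup_r|y_r|^2+C_\epsilon\,\mathbb{E}\Big(\int_0^\infty\beta_{1,r}(1+|x_r|^p)\mathrm{d}r\Big)^2 .
\]
To close the argument I will first estimate $\mathbb{E}\int|z|^2$ from the inequality with $s=0$ in expectation, without the sup. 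That estimate only involves $\mathbb{E}\sup|y|^2$ through the $\epsilon$-term, so combining the two estimates with $\epsilon$ small lets us absorb everything. Letting $T\to\infty$ and using $\mathbb{E}|y_T|^2\to0$ then yields
\[
\mathbb{E}\sup_{s}|y_s|^2+\mathbb{E}\int_0^\infty|z|^2\le C\,\mathbb{E}\Big(\int_0^\infty\beta_{1,r}(1+|x^{t,a}_r|^p)\mathrm{d}r\Big)^2 .
\]

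\textbf{Main obstacle.} The key step is controlling this last quantity by $C_p(1+|a|^{2p})$ uniformly over the infinite horizon. The pointwise bound of \Cref{le01} alone is not enough, since the integrand is squared. I will write the square as a double integral and apply Cauchy--Schwarz inside:
\[
\mathbb{E}\Big(\int\beta_{1,r}(1+|x_r|^p)\mathrm{d}r\Big)^2=\int\!\!\int\beta_{1,r}\beta_{1,u}\,\mathbb{E}[(1+|x_r|^p)(1+|x_u|^p)]\,\mathrm{d}r\,\mathrm{d}u .
\]
The expectation is at most $\sup_r\mathbb{E}(1+|x_r|^p)^2\le C_p(1+|a|^{2p})$, by \Cref{le01} applied with exponent $2p$. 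Hence the whole quantity is at most $C_p(1+|a|^{2p})\big(\int_0^\infty\beta_{1,r}\mathrm{d}r\big)^2$, which gives the claim.

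A minor technical point is that the absorption requires finiteness of $\mathbb{E}\sup|y|^2$ and $\mathbb{E}\int|z|^2$. This is guaranteed a priori by the solution space.
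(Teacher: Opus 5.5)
Your argument is correct, but it is organized differently from the paper's. The paper works in two stages. It applies It\^{o}'s formula to the unweighted $|y^{t,a}_s|^2$ and takes expectations at once. It then splits the forcing term by Young as $2\beta_{1,r}|y|(1+|x|^p)\le\beta_{1,r}|y|^2+\beta_{1,r}(1+|x|^p)^2$, so at this stage only $\sup_r\mathbb{E}|x_r|^{2p}$ integrated against $\beta_{1,\cdot}$ is needed. A backward Gronwall inequality then gives $\sup_s\mathbb{E}|y^{t,a}_s|^2\le C_p(1+|a|^{2p})$, and the $z$-bound follows. Only afterwards does the paper bound $\mathbb{E}\big(\int_0^\infty|f(s,x_s,y_s,z_s)|\,\mathrm{d}s\big)^2$, using the same double-integral and Cauchy--Schwarz trick you use. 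It obtains $\mathbb{E}\sup_s|y_s|^2$ by applying BDG to the equation $y_s=y_0-\int_0^s f\,\mathrm{d}r+\int_0^s z\,\mathrm{d}W$ itself, with no absorption.

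In your route, the exponential weight $\mathrm{e}^{B_s}$ plays the role of the Gronwall lemma, as the weight $\varrho$ does in \Cref{le03}. You get the pathwise supremum in one pass, by BDG on the martingale term of the weighted It\^{o} expansion plus absorption. This costs you the a priori finiteness of $\mathbb{E}\sup|y|^2$, which is available from $S^2$. It also needs a little care in the order of constants: the $\epsilon$ in your $s=0$ estimate for $\mathbb{E}\int|z|^2$ must be chosen after the constant $C_\epsilon$ from the BDG step is fixed. With that ordering, the absorption closes.

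What your route buys is a cleaner intermediate a priori estimate,
\[
\mathbb{E}\sup_s|y_s|^2+\mathbb{E}\int_0^\infty|z_s|^2\,\mathrm{d}s\le C\,\mathbb{E}\Big(\int_0^\infty\beta_{1,r}(1+|x_r|^p)\,\mathrm{d}r\Big)^2 ,
\]
with $C$ depending only on $\int_0^\infty\beta_{1,s}\,\mathrm{d}s$ and $\int_0^\infty\beta_{2,s}^2\,\mathrm{d}s$. It isolates the forward process in a single quantity, so it applies to any forward process for which that quantity is finite; \Cref{le01} enters only at the last step. The paper's route avoids the absorption argument and yields the moment bound $\sup_s\mathbb{E}|y_s|^2$ as a by-product.
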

\begin{proof}
    Fix $T > s \ge 0$. Applying It\^{o}'s formula to $|y^{t,a}_s|^2$ gives
    \begin{equation*}
        \mathbb{E}|y^{t,a}_s|^2 + \mathbb{E}\int_s^T |z^{t,a}_r|^2 \mathrm{d}r = \mathbb{E}|y^{t,a}_T|^2 + 2\mathbb{E}\int_s^T \langle y^{t,a}_r, f(r,x^{t,a}_r,y^{t,a}_r,z^{t,a}_r) \rangle \mathrm{d}r,
    \end{equation*}
    since $\int_s^\cdot \langle y^{t,a}_r,z^{t,a}_r\,\mathrm dW_r\rangle$ is a martingale. Hence, Young's inequality implies that
    \begin{equation*}
        \mathbb{E}|y^{t,a}_s|^2 + \frac{1}{2}\mathbb{E}\int_s^T |z^{t,a}_r|^2 \mathrm{d}r \leq \mathbb{E}|y^{t,a}_T|^2 + \mathbb{E}\int_s^T (3\beta_{1,r}+2\beta^2_{2,r}) |y^{t,a}_r|^2 \mathrm{d}r + \mathbb{E}\int_s^T \beta_{1,r}(1+|x^{t,a}_r|^p)^2 \mathrm{d}r.
    \end{equation*}
    Letting $T \to \infty$ and applying \Cref{le01}, \Cref{re41}, and the dominated convergence theorem, we obtain
    \begin{equation}
        \label{eq03}
        \mathbb{E}|y^{t,a}_s|^2 + \frac{1}{2}\mathbb{E}\int_s^{\infty} |z^{t,a}_r|^2 \mathrm{d}r \leq  \int_s^{\infty} (3\beta_{1,r}+2\beta^2_{2,r}) \mathbb{E}|y^{t,a}_r|^2 \mathrm{d}r + C_p(1+|a|^{2p}).
    \end{equation}
    The backward Gronwall inequality then yields
    \begin{equation*}
        \mathbb{E}|y^{t,a}_s|^2 \leq C_p(1+|a|^{2p}) \exp\left(\int_s^{\infty} (3\beta_{1,r}+2\beta^2_{2,r}) \mathrm{d}r\right).
    \end{equation*}
    Therefore,
    \begin{equation}
        \label{eq04}
        \sup_{s\geq 0}\mathbb{E}|y^{t,a}_s|^2 \leq C_p(1+|a|^{2p}).
    \end{equation}
    Moreover, setting $s=0$ in \eqref{eq03} and combining the resulting estimate with \eqref{eq04}, we deduce that 
    \begin{equation}
        \label{eq07_1}
        \mathbb{E}\int^{\infty}_0|z^{t,a}_s|^2\mathrm{d}s \leq C_p(1+|a|^{2p}).
    \end{equation}

    Next, we establish an estimate for $\mathbb{E}[\sup_{s\geq 0}|y^{t,a}_s|^2]$. By \eqref{eq04}, \eqref{eq07_1}, and \Cref{le01}, we obtain 
    \begin{equation*}
        \begin{aligned}
            &\quad \text{ }\mathbb{E}\left(\int^{\infty}_0\big|f(s,x^{t,a}_s,y^{t,a}_s,z^{t,a}_s)\big|\mathrm{d}s\right)^2\\
            &\leq 3\mathbb{E}\left(\int^{\infty}_0\beta_{1,s}\big(1+|x^{t,a}_s|^p\big)\mathrm{d}s\right)^2+3\mathbb{E}\left(\int^{\infty}_0\beta_{1,s}|y^{t,a}_s|\mathrm{d}s\right)^2+3\mathbb{E}\left(\int^{\infty}_0\beta_{2,s}|z^{t,a}_s|\mathrm{d}s\right)^2\\
            &\leq\left(\int^{\infty}_0\beta_{1,s}\mathrm{d}s\right)^2\left(6+6\sup_{s\geq 0}\mathbb{E}|x^{t,a}_s|^{2p}+3\sup_{s\geq 0}\mathbb{E}|y^{t,a}_s|^2\right)+3\left(\int^{\infty}_0\beta^2_{2,s}\mathrm{d}s\right)\mathbb{E}\int^{\infty}_0|z^{t,a}_s|^2\mathrm{d}s\\
            &\leq C_p(1+|a|^{2p}).
        \end{aligned}
    \end{equation*}
    Finally, applying the Burkholder--Davis--Gundy inequality yields
    \begin{equation}
        \label{eq08_1}
        \begin{aligned}
            &\mathbb{E}\left[\sup_{s\geq 0}|y^{t,a}_s|^2\right]\leq 3\mathbb{E}|y^{t,a}_0|^2+3\mathbb{E}\left(\int^{\infty}_0\big|f(s,x^{t,a}_s,y^{t,a}_s,z^{t,a}_s)\big|\mathrm{d}s\right)^2+3\mathbb{E}\left[\sup_{s\geq 0}\left|\int^{s}_0z^{t,a}_r\mathrm{d}W_r\right|^2\right]\\
            &\qquad\qquad\qquad\text{ }\leq C\left(\sup_{s\geq 0}\mathbb{E}|y^{t,a}_s|^2+\mathbb{E}\left(\int^{\infty}_0\big|f(s,x^{t,a}_s,y^{t,a}_s,z^{t,a}_s)\big|\mathrm{d}s\right)^2+\mathbb{E}\int^{\infty}_0|z^{t,a}_s|^2\mathrm{d}s\right)\\
            &\qquad\qquad\qquad\text{ }\leq C_p(1+|a|^{2p}),
        \end{aligned}
    \end{equation}
    where $C>0$. The conclusion of the lemma then follows directly from \eqref{eq07_1} and \eqref{eq08_1}.
\end{proof}

Then, we present a useful result showing that the Picard iteration sequence associated with \eqref{eq02} converges almost everywhere to the solution of this BSDE.

\begin{lemma}
    \label{le03}
    Suppose that Assumptions 1 and 2 hold. Let $(y^{t,a,k}_{\cdot},z^{t,a,k}_{\cdot})$ be the sequence defined recursively by $(y^{t,a,0}_{\cdot},z^{t,a,0}_{\cdot})=(0,0)$ and
    \begin{equation}
        \label{eq09_1}
        y^{t,a,k+1}_s = \int^{\infty}_s f(r,x^{t,a}_r,y^{t,a,k}_r,z^{t,a,k}_r)\mathrm{d}r-\int^{\infty}_sz^{t,a,k+1}_r\mathrm{d}W_r, \quad s\ge 0,\quad k\ge 0,
    \end{equation}
    Then,
    \begin{equation*}
        \lim_{k \to \infty}  y^{t,a,k}_s = y^{t,a}_s,\quad \lim_{k \to \infty}  z^{t,a,k}_s = z^{t,a}_s, \quad \mathrm{d}\mathbb{P} \otimes \mathrm{d}t\text{-a.e. on } \Omega \times [0,\infty).
    \end{equation*}
    Furthermore,
    \begin{equation*}
        \lim_{k \to \infty} \sup_{s\geq 0}|y^{t,a,k}_s-y^{t,a}_s|=0,\quad \mathbb{P}\text{-a.s.}
    \end{equation*}
    Moreover,
    \begin{equation}
        \label{eq10_1}
        \lim_{k \to \infty} \mathbb{E}\left\{\sup_{s\geq 0} |y^{t,a,k}_s-y^{t,a}_s|^2 + \int_{0}^{\infty}|z^{t,a,k}_s-z^{t,a}_s|^2\mathrm{d}s\right\} = 0.
    \end{equation}
\end{lemma}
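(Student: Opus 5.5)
The plan is to prove \eqref{eq10_1} first and then extract the almost-everywhere and almost-sure statements from it. The natural tool is a weighted norm adapted to the non-uniform Lipschitz coefficients of Assumption 2-(ii). First, each Picard iterate is well defined. If $(y^{t,a,k},z^{t,a,k})\in S^2_{\mathcal F}\times L^2_{\mathcal F}$, then Assumption 2 together with Cauchy--Schwarz (as in \Cref{re41}) gives
\begin{equation*}
\mathbb{E}\Bigl(\int_0^\infty |f(r,x^{t,a}_r,y^{t,a,k}_r,z^{t,a,k}_r)|\,\mathrm{d}r\Bigr)^2<\infty .
\end{equation*}
Hence $y^{t,a,k+1}_s=\mathbb{E}[\int_s^\infty f(\cdots)\mathrm{d}r\mid\mathcal F_s]$, and $z^{t,a,k+1}$ is obtained from the martingale representation theorem applied to $\mathbb{E}[\int_0^\infty f\,\mathrm{d}r\mid\mathcal F_s]$. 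Doob's and the BDG inequalities put the pair back in $S^2_{\mathcal F}\times L^2_{\mathcal F}$, so the induction closes.

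Set $\delta y^k=y^{t,a,k}-y^{t,a}$ and $\delta z^k=z^{t,a,k}-z^{t,a}$. Introduce the increasing deterministic weight $\Lambda_s=\exp\bigl(\lambda\int_0^s(\zeta_{1,r}+\zeta_{2,r}^2)\mathrm{d}r\bigr)$, which is bounded between $1$ and a finite constant $\bar\Lambda$ by the integrability of $\zeta_1,\zeta_2^2$. Apply It\^o's formula to $\Lambda_s|\delta y^{k+1}_s|^2$ on $[s,T]$ and let $T\to\infty$, using $\mathbb{E}|\delta y^{k+1}_T|^2\to0$ (\Cref{re41} applied to both solutions). The Lipschitz bound and Young's inequality in the form $2|y|\zeta_1|u|\le\zeta_1(|y|^2+|u|^2)$ and $2|y|\zeta_2|v|\le 2\zeta_2^2|y|^2+\tfrac12|v|^2$ should give, for $\lambda$ large,
\begin{equation*}
\mathbb{E}\!\int_0^\infty\!\Lambda_r\bigl(\zeta_{1,r}|\delta y^{k+1}_r|^2+|\delta z^{k+1}_r|^2\bigr)\mathrm{d}r\le \theta\,\mathbb{E}\!\int_0^\infty\!\Lambda_r\bigl(\zeta_{1,r}|\delta y^{k}_r|^2+|\delta z^{k}_r|^2\bigr)\mathrm{d}r
\end{equation*}
for some $\theta<1$. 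The main obstacle is tuning $\lambda$ so that this contraction genuinely holds, since the $y$-term carries the weight $\zeta_1$ rather than $\mathrm{d}r$. If this direct tuning fails, I would fall back on the standard alternative: bound the quantity
\begin{equation*}
\mathbb{E}\sup_s|\delta y^{k+1}_s|^2+\mathbb{E}\int_0^\infty|\delta z^{k+1}|^2
\end{equation*}
by $C\bigl(\int_0^\infty\zeta_1\bigr)^2\mathbb{E}\sup|\delta y^k|^2+C\int_0^\infty\zeta_2^2\,\mathbb{E}\int|\delta z^k|^2$. This is a contraction on a tail interval $[T_0,\infty)$ where these integrals are small. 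One then concatenates finitely many short intervals backward, on each of which $\int\zeta_1$ and $\int\zeta_2^2$ are small, as in Fan--Jiang \cite{FJ2012}; the finite-horizon Picard iterates on each piece then converge exponentially.

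Either route yields $\mathbb{E}\int_0^\infty(\ldots)\le C\theta^k$. Next, BDG on the representation
\begin{equation*}
\delta y^{k+1}_s=\int_s^\infty\bigl(f(\cdot,y^{k},z^{k})-f(\cdot,y,z)\bigr)\mathrm{d}r-\int_s^\infty\delta z^{k+1}\,\mathrm{d}W
\end{equation*}
gives
\begin{equation*}
\mathbb{E}\sup_{s\ge0}|\delta y^{k+1}_s|^2\le C\Bigl(\int\zeta_1\Bigr)\mathbb{E}\!\int\!\zeta_1|\delta y^k|^2+C\Bigl(\int\zeta_2^2\Bigr)\mathbb{E}\!\int\!|\delta z^k|^2+C\,\mathbb{E}\!\int\!|\delta z^{k+1}|^2\le C\theta^k .
\end{equation*}
This proves \eqref{eq10_1} with a geometric rate.

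Finally, because $\sum_k\mathbb{E}\sup_s|\delta y^k_s|^2<\infty$, monotone convergence gives $\sum_k\sup_s|\delta y^k_s|^2<\infty$ $\mathbb P$-a.s. Therefore $\sup_s|\delta y^k_s|\to0$ almost surely along the whole sequence, not merely along a subsequence. The same argument applied to $\sum_k\int_0^\infty|\delta z^k_s|^2\mathrm{d}s$ shows that $\sum_k|\delta z^k_s(\omega)|^2<\infty$ for $\mathrm d\mathbb P\otimes\mathrm dt$-a.e. $(\omega,s)$. Hence $z^{t,a,k}\to z^{t,a}$ a.e., and the same holds for $y$.
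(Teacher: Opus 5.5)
Your overall route is the paper's. It uses the same exponential weight (the paper takes $\lambda=3$), It\^{o}'s formula for $\Lambda_s|\delta y^{k+1}_s|^2$, geometric decay of $\mathbb{E}\int_0^\infty\Lambda_r(\zeta_{1,r}|\delta y^k_r|^2+|\delta z^k_r|^2)\mathrm{d}r$, BDG for the supremum, and summability in $k$ for the almost-sure and almost-everywhere statements.

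The one step you leave open is the contraction, and it does not close with the Young split you wrote. Letting $T\to\infty$ and dropping nonnegative terms, that split gives, for $\lambda\ge 2$,
\begin{equation*}
(\lambda-1)\,\mathbb{E}\int_0^\infty\Lambda_r\zeta_{1,r}|\delta y^{k+1}_r|^2\mathrm{d}r+\mathbb{E}\int_0^\infty\Lambda_r|\delta z^{k+1}_r|^2\mathrm{d}r\le\mathbb{E}\int_0^\infty\Lambda_r\bigl(\zeta_{1,r}|\delta y^{k}_r|^2+\tfrac12|\delta z^{k}_r|^2\bigr)\mathrm{d}r .
\end{equation*}
The coefficient of $|\delta z^{k+1}|^2$ on the left is $1$ for every $\lambda$, while $\zeta_1|\delta y^k|^2$ enters the right with coefficient $1$. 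So this inequality yields only $\theta=1$ in the norm you display, and enlarging $\lambda$ does not help.

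The repair is immediate, in either of two ways:
\begin{enumerate}[(i)]
\item Split the $\zeta_1$-term as $2\zeta_1|\delta y^{k+1}||\delta y^k|\le2\zeta_1|\delta y^{k+1}|^2+\tfrac12\zeta_1|\delta y^k|^2$ and take $\lambda=3$. This is exactly the paper's computation and gives $\theta=\tfrac12$, with a leftover $(\zeta_1+\zeta_2^2)|\delta y^{k+1}|^2$ term on the left.
\item Keep your split and read the display as a contraction with factor $\max\{(\lambda-1)^{-1},\tfrac12\}<1$ (for $\lambda>2$) in the equivalent norm $\mathbb{E}\int_0^\infty\Lambda_r\bigl((\lambda-1)\zeta_{1,r}|\delta y_r|^2+|\delta z_r|^2\bigr)\mathrm{d}r$.
\end{enumerate}

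The fallback by backward concatenation is then unnecessary, since $\Lambda$ is bounded on all of $[0,\infty)$. As sketched it would also need more care: on each earlier piece the terminal value $y^{t,a,k+1}_{T_0}$ moves with $k$, so the pieces are perturbed rather than plain finite-horizon Picard schemes.

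Your BDG bound, the summation argument, and the well-posedness check of the iterates are fine; the paper takes that last point for granted.
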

\begin{proof}
    For notational simplicity, we set $\Delta y^{k}_\cdot = y^{t,a,k}_\cdot - y^{t,a}_\cdot$, $\Delta z^{k}_\cdot = z^{t,a,k}_\cdot - z^{t,a}_\cdot$, $\Delta f^k_\cdot = f(\cdot,x^{t,a}_\cdot,y^{t,a,k}_\cdot,z^{t,a,k}_\cdot) - f(\cdot,x^{t,a}_\cdot,y^{t,a}_\cdot,z^{t,a}_\cdot)$, and $\varrho_\cdot = \exp\left(3 \int_0^\cdot (\zeta_{1,r} + \zeta^2_{2,r}) \mathrm{d}r\right)$. Clearly, the following equation holds:
    \begin{equation}
        \label{eq05}
        \Delta y^{k+1}_s = \int^{\infty}_s \Delta f^k_r \mathrm{d}r - \int^{\infty}_s \Delta z^{k+1}_r \mathrm{d}W_r.
    \end{equation}
    Applying It\^{o}'s formula to $\varrho_s |\Delta y^{k+1}_s|^2$ yields
    \begin{equation*}
        \begin{aligned}
            &\quad\text{ } \mathbb{E}[\varrho_s |\Delta y^{k+1}_s|^2] + 3\mathbb{E}\int_s^T \varrho_r (\zeta_{1,r} + \zeta^2_{2,r}) |\Delta y^{k+1}_r|^2 \mathrm{d}r + \mathbb{E}\int_s^T \varrho_r |\Delta z^{k+1}_r|^2 \mathrm{d}r\\
            &= \mathbb{E}[\varrho_T |\Delta y^{k+1}_T|^2] + 2 \mathbb{E}\int_s^T \varrho_r \langle \Delta y^{k+1}_r, \Delta f^k_r \rangle \mathrm{d}r.
        \end{aligned}
    \end{equation*}
    Letting $T \to \infty$ and using Young's inequality, we obtain
    \begin{equation}
        \label{eq06_1}
        \begin{aligned}
            &\quad \text{ }\mathbb{E}[\varrho_s |\Delta y^{k+1}_s|^2] + \mathbb{E}\int_s^{\infty} \varrho_r(\zeta_{1,r} + \zeta^2_{2,r}) |\Delta y^{k+1}_r|^2 \mathrm{d}r + \mathbb{E}\int_s^{\infty} \varrho_r |\Delta z^{k+1}_r|^2 \mathrm{d}r\\
            &\leq \frac{1}{2} \mathbb{E}\int_s^{\infty} \varrho_r (\zeta_{1,r} |\Delta y^k_r|^2 + |\Delta z^k_r|^2) \mathrm{d}r.
        \end{aligned}
    \end{equation}
    It further follows that
    \begin{equation*}
      \mathbb{E}\int_0^{\infty} \varrho_s (\zeta_{1,s} |\Delta y^{k+1}_s|^2 + |\Delta z^{k+1}_s|^2) \mathrm{d}s  \leq \frac{1}{2} \mathbb{E}\int_0^{\infty} \varrho_s (\zeta_{1,s} |\Delta y^k_s|^2 + |\Delta z^k_s|^2) \mathrm{d}s, \quad k\geq 0.
    \end{equation*}
    Since there exists a constant $C>0$ such that $\mathbb{E}\int_0^{\infty} \varrho_s (\zeta_{1,s} |\Delta y^{0}_s|^2 + |\Delta z^{0}_s|^2) \mathrm{d}s = \mathbb{E}\int_0^{\infty} \varrho_s ( \zeta_{1,s} |y^{t,a}_s|^2 + |z^{t,a}_s|^2 ) \mathrm{d}s \leq C$, the following inequality holds:
    \begin{equation}
        \label{eq06}
        \mathbb{E}\int_0^{\infty} \varrho_s (\zeta_{1,s} |\Delta y^k_s|^2 + |\Delta z^k_s|^2) \mathrm{d}s \leq \frac{C}{2^k},\quad k\geq 0.
    \end{equation}
    Combining \eqref{eq06} with the monotone convergence theorem gives
    \begin{equation*}
        \begin{aligned}
            &\mathbb{E}\int_0^{\infty} \left( \sum_{k=0}^{\infty} \big(\zeta_{1,s} |\Delta y^k_s|^2 + |\Delta z^k_s|^2\big) \right) \mathrm{d}s =\sum_{k=0}^{\infty} \mathbb{E}\int_0^{\infty} (\zeta_{1,s} |\Delta y^k_s|^2 + |\Delta z^k_s|^2) \mathrm{d}s \\
            &\qquad\qquad\qquad\qquad\qquad\qquad\qquad\qquad\text{ }\text{ }\leq \sum_{k=0}^{\infty} \mathbb{E}\int_0^{\infty} \varrho_s (\zeta_{1,s} |\Delta y^k_s|^2 + |\Delta z^k_s|^2) \mathrm{d}s\\
            &\qquad\qquad\qquad\qquad\qquad\qquad\qquad\qquad\text{ }\text{ }\leq 2C.
        \end{aligned}
    \end{equation*}
    Hence,
    \begin{equation*}
        \sum_{k=0}^{\infty} \big(\zeta_{1,s} |\Delta y^k_s|^2 + |\Delta z^k_s|^2\big) < \infty, \quad \mathrm{d}\mathbb{P} \otimes \mathrm{d}t\text{-a.e. on } \Omega \times [0,\infty).
    \end{equation*}
    Therefore,
    \begin{equation*}
        \lim_{k \to \infty} |\Delta y^k_s|^2 = 0, \quad \lim_{k \to \infty} |\Delta z^k_s|^2 = 0,\quad \mathrm{d}\mathbb{P} \otimes \mathrm{d}t\text{-a.e. on } \Omega \times [0,\infty).
    \end{equation*}
    
    We now study the limit of $\sup_{s\geq 0}|\Delta y^k_s|$. By \eqref{eq05}, the Burkholder--Davis--Gundy inequality, and the Cauchy--Schwarz inequality,
    \begin{equation}
        \label{eq08}
        \begin{aligned}
            &\quad \text{ }\mathbb{E}\left[\sup_{s \geq 0} |\Delta y^{k+1}_s|^2\right]\\
            &\leq C \mathbb{E}\int_0^{\infty} \varrho_s \left[\zeta_{1,s} \big(|\Delta y^k_s|^2+|\Delta y^{k+1}_s|^2\big) + \big(|\Delta z^k_s|^2+|\Delta z^{k+1}_s|^2\big)\right] \mathrm{d}s,
        \end{aligned}
    \end{equation}
    where $C>0$. Consequently, \eqref{eq06} implies that 
    \begin{equation*}
        \mathbb{E}\left[\sum_{k=0}^{\infty}\sup_{s \geq 0} |\Delta y^{k+1}_s|^2\right] =\sum_{k=0}^{\infty} \mathbb{E}\left[\sup_{s \geq 0} |\Delta y^{k+1}_s|^2\right] < \infty.
    \end{equation*}
    Then, we have 
    \begin{equation*}
        \lim_{k \to \infty} \sup_{s\geq 0}|\Delta y^k_s|^2=0,\quad \mathbb{P}\text{-a.s.}
    \end{equation*}
    Moreover, combining \eqref{eq06_1}--\eqref{eq08}, one readily obtains
    \begin{equation*}
        \lim_{k \to \infty} \mathbb{E}\left\{\sup_{s\geq 0} |\Delta y^k_s|^2 + \int_{0}^{\infty}|\Delta z^k_s|^2\mathrm{d}s\right\} = 0.
    \end{equation*}
    This completes the proof.
\end{proof}

Next, we investigate the Markovian property of \eqref{eq02}, namely, the relationship between $(y^{t,a}_{\cdot},z^{t,a}_{\cdot})$ and $x^{t,a}_{\cdot}$. We first consider the simpler case in which the generator $f$ of \eqref{eq02} is independent of $(y^{t,a}_{\cdot},z^{t,a}_{\cdot})$. More precisely, we consider the following equation: 
\begin{equation}
    \label{eq13_01}
    \tilde y^{t,a}_s = \int^{\infty}_s \tilde f(r,x^{t,a}_r)\mathrm{d}r-\int^{\infty}_s\tilde z^{t,a}_r\mathrm{d}W_r, \quad s\geq t,
\end{equation}
where $\tilde f:[0,\infty) \times \mathbb{R}^n \rightarrow \mathbb{R}^m$ is such that, for any $(t,a) \in [0,\infty) \times \mathbb{R}^n$, $\mathbb{E}(\int^{\infty}_t|\tilde f(s,x^{t,a}_s)|\mathrm{d}s)^2<\infty$. Clearly, \eqref{eq13_01} admits a unique solution $(\tilde y^{t,a}_{\cdot},\tilde z^{t,a}_{\cdot}) \in S^2_{\mathcal{F}}(t,\infty;\mathbb{R}^m) \times L^2_{\mathcal{F}}(t,\infty;\mathbb{R}^{m\times n})$. We now state the result on the Markovian representation of $(\tilde y^{t,a}_{\cdot},\tilde z^{t,a}_{\cdot})$ in this simple case. The detailed proof is provided in \Cref{app}.

\begin{lemma}
    \label{le44}
    Suppose that Assumption 1 holds. For \eqref{eq13_01}, there exist two Borel measurable functions $\gamma_1:[0,\infty)\times \mathbb{R}^n \to \mathbb{R}^m$ and $\gamma_2:[0,\infty)\times \mathbb{R}^n \to \mathbb{R}^{m\times n}$, where $\gamma_1(t,a) = \mathbb{E}\int^{\infty}_t\tilde f(r,x^{t,a}_r)\mathrm{d}r$, such that, for any $(t,a) \in [0,\infty)\times \mathbb{R}^n$, 
    \begin{equation*}
        \tilde y^{t,a}_s = \gamma_1(s,x^{t,a}_s),\quad \forall s\geq t,\quad \mathbb{P}\text{-a.s.},
    \end{equation*}
    and
    \begin{equation*}
        \tilde z^{t,a}_s = \gamma_2(s,x^{t,a}_s),\quad \mathrm{d}\mathbb{P} \otimes \mathrm{d}t\text{-a.e. on } \Omega \times [t,\infty).
    \end{equation*}
\end{lemma}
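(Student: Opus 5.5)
The plan is to reduce to finite horizons, where the Markov property of the forward process is classical, and then pass to the limit using the integrability assumption on $\tilde f$. Everything rests on the Markov property of $x^{t,a}$ under Assumption 1. The coefficients are Lipschitz in $x$, so pathwise uniqueness gives the flow property $x^{t,a}_r=x^{s,x^{t,a}_s}_r$ for $r\ge s\ge t$. Since $x^{s,b}_r$ depends only on the increments $W_\cdot-W_s$, which are independent of $\mathcal F_s$, we get for every $s\ge t$
\begin{equation*}
\mathbb E\Big[\int_s^\infty \tilde f(r,x^{t,a}_r)\,\mathrm dr\,\Big|\,\mathcal F_s\Big]=\gamma_1(s,x^{t,a}_s),\qquad \gamma_1(s,b):=\mathbb E\int_s^\infty\tilde f(r,x^{s,b}_r)\,\mathrm dr .
\end{equation*}

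Borel measurability of $\gamma_1$ comes from a monotone class argument. For $\tilde f$ bounded continuous and truncated to $[s,T]$, the map $(s,b)\mapsto \mathbb E\int_s^T\tilde f(r,x^{s,b}_r)\,\mathrm dr$ is continuous, by $L^2$-continuity of $(s,b)\mapsto x^{s,b}$. Monotone class extends this to bounded Borel $\tilde f$. Truncating $\tilde f$ and letting $T\to\infty$ then gives Borel measurability in general, because the limits are pointwise in $(s,b)$ and are justified by dominated convergence via $\mathbb E(\int_t^\infty|\tilde f(r,x^{t,a}_r)|\,\mathrm dr)^2<\infty$.

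Since $\tilde y^{t,a}_s$ equals this conditional expectation, $\tilde y^{t,a}_s=\gamma_1(s,x^{t,a}_s)$ a.s. for each $s$. Both sides are continuous in $s$: the left side by construction, and the right side as a continuous version of a martingale plus an absolutely continuous term. So the identity holds for all $s\ge t$ simultaneously, $\mathbb P$-a.s.

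For $z$ I use a density argument. Consider $\tilde f$ bounded, $C^{1}$ in $x$ with bounded derivatives, and supported in $[0,T]$. On $[t,T]$ the BSDE is a standard finite-horizon Markovian BSDE, and by El Karoui--Peng--Quenez \cite{EPQ1997} one has $\tilde z^{t,a}_s=\nabla_b\gamma_1(s,x^{t,a}_s)\sigma(s,x^{t,a}_s)$, where $\gamma_1$ is differentiable in $b$. The resulting $\gamma_2$ does not depend on $(t,a)$, and it vanishes for $s>T$. A general $\tilde f$ is reached by approximation, but the approximation must be done uniformly in all initial pairs, and this is the main obstacle. I will avoid it as follows. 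Define $\gamma_2$ intrinsically, independently of any approximation, by
\begin{equation*}
\gamma_2(s,b):=\limsup_{h\downarrow0}\frac1h\,\mathbb E\Big[\big(\gamma_1(s+h,x^{s,b}_{s+h})-\gamma_1(s,b)\big)(W_{s+h}-W_s)^{\top}\Big],
\end{equation*}
taken componentwise and set to $0$ where the limsup is infinite. This is Borel, being a limsup over rational $h$ of Borel functions; measurability of each term follows from the joint measurability of $(s,b,\omega)\mapsto x^{s,b}_{s+h}$ together with Borel $\gamma_1$.

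It remains to identify $\gamma_2$ along trajectories. By the Markov property, $\gamma_2(s,x^{t,a}_s)$ equals the same limsup with $\mathbb E[\,\cdot\,|\mathcal F_s]$ applied to the increments of $\tilde y^{t,a}$. Since $\tilde y^{t,a}_{s+h}-\tilde y^{t,a}_s=-\int_s^{s+h}\tilde f\,\mathrm dr+\int_s^{s+h}\tilde z^{t,a}_r\,\mathrm dW_r$, this conditional quantity is
\begin{equation*}
\frac1h\,\mathbb E\Big[\int_s^{s+h}\tilde z^{t,a}_r\,\mathrm dr\,\Big|\,\mathcal F_s\Big]+\frac1h\,O\Big(\mathbb E\Big[\int_s^{s+h}|\tilde f|\,\mathrm dr\,|W_{s+h}-W_s|\,\Big|\,\mathcal F_s\Big]\Big).
\end{equation*}
The first term converges to $\tilde z^{t,a}_s$, $\mathrm d\mathbb P\otimes\mathrm ds$-a.e., by a Lebesgue differentiation argument for adapted processes: approximate $\tilde z^{t,a}$ in $L^2$ by simple processes and use a maximal inequality. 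The second term vanishes a.e. by Cauchy--Schwarz together with local integrability of $\tilde f$. The delicate step is this a.e. convergence, since the conditional expectation depends on $h$. If it fails directly, I would fall back to the smoothed approximation above combined with a diagonal extraction along a countable dense family, using only measurability of limits.
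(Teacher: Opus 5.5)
Your Borel-measurability argument and the fixed-time identity $\tilde y^{t,a}_s=\gamma_1(s,x^{t,a}_s)$ match the paper's. The proof breaks when you pass from ``for each fixed $s$'' to ``for all $s\ge t$ simultaneously''. The Markov property only tells you that $s\mapsto\gamma_1(s,x^{t,a}_s)$ is a \emph{modification} of the continuous process $\tilde y^{t,a}$. Saying that the right-hand side is continuous ``as a continuous version of a martingale plus an absolutely continuous term'' is circular: it guarantees that \emph{some} version is continuous, not that this particular Borel composition is. The argument in fact proves too much. Take $n=1$, fix $e\in\mathbb{R}^m\setminus\{0\}$, and replace $\gamma_1$ by $\gamma_1+e\,\mathbbm{1}_G$ with $G=[0,\infty)\times\{a+1\}$. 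Because $x^{t,a}_s$ has a density for $s>t$, every fixed-time identity survives, so your reasoning applies verbatim. Yet on the event $\{x^{t,a}_{t+1}>a+1\}$, which has positive probability, the path crosses the level $a+1$ and the pathwise identity fails.

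So you need a regularity property specific to the potential $\gamma_1(t,a)=\mathbb{E}\int_t^\infty\tilde f(r,x^{t,a}_r)\,\mathrm dr$. The paper gets it by writing $\gamma_1=V_+-V_-$, where $V_\pm$ are the potentials of the positive and negative parts of $\tilde f$. These are excessive for the space--time process $(t+r,x^{t,a}_{t+r})$, so $V_\pm(\cdot,x^{t,a}_\cdot)$ is a.s.\ right-continuous (Blumenthal--Getoor, Ch.~II, Prop.~4.2 and Thm.~4.8). Two right-continuous processes that agree at each fixed time are indistinguishable. Within your own framework, an alternative is to prove $\tilde y^{t,a}_\tau=\gamma_1(\tau,x^{t,a}_\tau)$ a.s.\ for every bounded stopping time $\tau\ge t$, using the strong Markov property of the flow. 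You can then apply the optional section theorem to these two optional processes.

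For $z$ you take a genuinely different route from the paper. The paper restricts to $[t,k]$ with terminal value $\gamma_1(k,x^{t,a}_k)$ and quotes the finite-horizon representation (El Karoui--Peng--Quenez, Lemma~4.1, with Nam--Xu, Lemma~A.2). This gives a Borel $\xi_k$ valid for all $(t,a)\in[0,k]\times\mathbb{R}^n$, and the paper pastes $\xi_{k+1}$ on $[k,k+1)$. Your intrinsic difference quotient has the merit of being independent of $(t,a)$ by construction. The conditional Lebesgue differentiation also works, provided you take the limsup along a fixed sequence $h_k\downarrow0$, since a limsup over real $h$ is not automatically a countable one. The quotients are then dominated by $\mathbb{E}[M^*(s)\,|\,\mathcal{F}_s]$, where $M^*$ is the pathwise one-sided Hardy--Littlewood maximal function of $|\tilde z^{t,a}|$. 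This is square integrable, and simple adapted processes are dense.

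The remainder term, however, is not controlled under the lemma's hypothesis. Cauchy--Schwarz gives $h^{-1/2}\bigl(\mathbb{E}[(\int_s^{s+h}|\tilde f(r,x^{t,a}_r)|\,\mathrm dr)^2\,|\,\mathcal{F}_s]\bigr)^{1/2}$. This is small if $\tilde f(\cdot,x^{t,a}_\cdot)\in L^2(\Omega\times[t,T])$, but only $\mathbb{E}(\int_t^\infty|\tilde f(r,x^{t,a}_r)|\,\mathrm dr)^2<\infty$ is assumed. This weaker integrability is exactly what arises when the lemma is used inside the Picard iteration, where $\tilde f$ contains terms like $\zeta_{2,r}|z^{t,a,k}_r|$. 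The simple repair is to include $\int_s^{s+h}\tilde f(r,x^{s,b}_r)\,\mathrm dr$ inside the quotient defining $\gamma_2$. By It\^o's isometry the conditional quotient is then exactly $h^{-1}\mathbb{E}[\int_s^{s+h}\tilde z^{t,a}_r\,\mathrm dr\,|\,\mathcal{F}_s]$. Your fallback via smoothing and diagonal extraction would not help, because the a.e.-convergent subsequences would depend on $(t,a)$.
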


We now present the main theorem of this section, which establishes the Markovian property of \eqref{eq02}. 
\begin{theorem}
    \label{th04}
    Suppose that Assumptions 1 and 2 hold. For \eqref{eq02}, there exist two Borel measurable functions $\gamma_1:[0,\infty)\times \mathbb{R}^n \to \mathbb{R}^m$ and $\gamma_2:[0,\infty)\times \mathbb{R}^n \to \mathbb{R}^{m\times n}$, where $\gamma_1(t,a) = \mathbb{E}\int^{\infty}_tf(r,x^{t,a}_r,y^{t,a}_r,z^{t,a}_r)\mathrm{d}r$, such that, for any $(t,a) \in [0,\infty)\times \mathbb{R}^n$,
    \begin{equation*}
        y^{t,a}_s = \gamma_1(s,x^{t,a}_s),\quad \forall s\geq t,\quad \mathbb{P}\text{-a.s.},
    \end{equation*}
    and
    \begin{equation*}
        z^{t,a}_s = \gamma_2(s,x^{t,a}_s),\quad \mathrm{d}\mathbb{P} \otimes \mathrm{d}t\text{-a.e. on } \Omega \times [t,\infty).
    \end{equation*}
\end{theorem}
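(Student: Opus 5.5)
We will obtain the representation by combining the Picard scheme of \Cref{le03} with the decoupled result \Cref{le44}, using induction on $k$.

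\textbf{Induction over the Picard iterates.} Define $(y^{t,a,k}_\cdot,z^{t,a,k}_\cdot)$ by \eqref{eq09_1}. We claim that for each $k\ge 0$ there are Borel functions $\gamma^k_1,\gamma^k_2$, \emph{independent of $(t,a)$}, such that for every $(t,a)$ we have $y^{t,a,k}_s=\gamma^k_1(s,x^{t,a}_s)$ for all $s\ge t$ a.s., and $z^{t,a,k}_s=\gamma^k_2(s,x^{t,a}_s)$ $\mathrm d\mathbb P\otimes\mathrm dt$-a.e. on $[t,\infty)$. For $k=0$ take $\gamma^0_1=\gamma^0_2=0$. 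If the claim holds for $k$, set
\[
\tilde f^{k}(r,x)=f\bigl(r,x,\gamma^k_1(r,x),\gamma^k_2(r,x)\bigr),
\]
which is Borel. The generator of the $(k+1)$-th equation equals $\tilde f^k(r,x^{t,a}_r)$ $\mathrm d\mathbb P\otimes \mathrm dr$-a.e. on $[t,\infty)$. Its integrability, $\mathbb E(\int_t^\infty|\tilde f^k(r,x^{t,a}_r)|\mathrm dr)^2<\infty$, follows from Assumption 2 together with $y^{t,a,k}\in S^2$ and $z^{t,a,k}\in L^2$, exactly as in \Cref{re41}. Restricted to $[t,\infty)$, $(y^{t,a,k+1},z^{t,a,k+1})$ solves \eqref{eq13_01} with $\tilde f=\tilde f^k$, and by uniqueness it coincides with $(\tilde y^{t,a},\tilde z^{t,a})$. \Cref{le44} then supplies $\gamma^{k+1}_1,\gamma^{k+1}_2$, which depend only on $\tilde f^k$ and hence not on $(t,a)$. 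This gives the claim for $k+1$.

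\textbf{Passage to the limit.} Define
\[
\gamma_1=\limsup_k\gamma_1^k,\qquad \gamma_2=\limsup_k\gamma^k_2
\]
componentwise, set to $0$ where the limsup is infinite. Both are Borel. Fix $(t,a)$. By \Cref{le03}, $\sup_s|y^{t,a,k}_s-y^{t,a}_s|\to0$ a.s. Hence a.s., for all $s\ge t$, the sequence $\gamma^k_1(s,x^{t,a}_s)$ converges to $y^{t,a}_s$, so $\gamma_1(s,x^{t,a}_s)=y^{t,a}_s$ for all $s\ge t$. Similarly, $z^{t,a,k}\to z^{t,a}$ $\mathrm d\mathbb P\otimes\mathrm dt$-a.e. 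Intersecting this with the a.e. sets on which $z^{t,a,k}_s=\gamma^k_2(s,x^{t,a}_s)$ (countably many $k$) gives $z^{t,a}_s=\gamma_2(s,x^{t,a}_s)$ a.e. on $\Omega\times[t,\infty)$.

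\textbf{Formula for $\gamma_1$.} At $s=t$, $x^{t,a}_t=a$ is deterministic, so $y^{t,a}_t=\gamma_1(t,a)$ a.s. On the other hand, $y^{t,a}_t$ is $\mathcal F_t$-measurable, and taking expectations in \eqref{eq02} (the stochastic integral on $[t,\infty)$ has zero mean since $z^{t,a}\in L^2$) gives $\mathbb E[y^{t,a}_t]=\mathbb E\int_t^\infty f(r,x^{t,a}_r,y^{t,a}_r,z^{t,a}_r)\mathrm dr$. Hence $\gamma_1(t,a)$ equals this expectation. Since $\gamma_1$ is already defined as a limsup, this is an identity rather than a definition. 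The same argument shows each $\gamma^k_1(t,a)$ is the analogous expectation, consistent with \Cref{le44}.

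\textbf{Main obstacle.} The delicate point is that the functions must be common to all $(t,a)$ while the convergence statements hold only for each fixed $(t,a)$ outside its own null set. The limsup construction handles this: the Borel functions are fixed before any null sets appear, and the limit identification is then done for each $(t,a)$ separately. The other thing to verify is that "a.e. equality on $[t,\infty)$" suffices to identify the $(k+1)$-th generator with $\tilde f^k(r,x^{t,a}_r)$. It does, because the generator enters only through a $\mathrm dr$-integral, and \Cref{le44} covers the case of a start at time $t$ with data on $[t,\infty)$.
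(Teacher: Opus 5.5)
Your proposal is correct and follows the paper's proof: induction over the Picard iterates of \Cref{le03}, with \Cref{le44} supplying Borel functions $\gamma^k_1,\gamma^k_2$ that do not depend on $(t,a)$, followed by the componentwise $\limsup$ construction of $\gamma_1,\gamma_2$. The only minor difference is how you identify $\gamma_1(t,a)$: you evaluate the representation at $s=t$, where $x^{t,a}_t=a$ is deterministic, and take expectations in \eqref{eq02}, whereas the paper passes to the limit in $\gamma^{k+1}_1(t,a)=\mathbb{E}\int_t^\infty f(r,x^{t,a}_r,y^{t,a,k}_r,z^{t,a,k}_r)\,\mathrm{d}r$ using the $L^1$-convergence of the generators that follows from Assumption 2 and \eqref{eq10_1}; your version is slightly more direct and equally valid.
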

\begin{proof}
    The assertion can be established through the iterative procedure \eqref{eq09_1} used in \Cref{le03}. We proceed by mathematical induction. Set $\gamma_1^0(t,a)\equiv0$ and $\gamma_2^0(t,a)\equiv0$. Then $y^{t,a,0}_s = \gamma_1^0(s,x^{t,a}_s)$ and $z^{t,a,0}_s = \gamma_2^0(s,x^{t,a}_s)$. Suppose that there exist deterministic measurable functions $\gamma_1^k$ and $\gamma_2^k$ such that, for any $(t,a) \in [0,\infty)\times \mathbb{R}^n$, $y^{t,a,k}_s = \gamma_1^k(s,x^{t,a}_s)$ and $z^{t,a,k}_s = \gamma_2^k(s,x^{t,a}_s)$. It follows that
    \begin{equation*}
        y^{t,a,k+1}_s = \int^{\infty}_s f(r,x^{t,a}_r,\gamma_1^k(r,x^{t,a}_r),\gamma_2^k(r,x^{t,a}_r))\mathrm{d}r-\int^{\infty}_sz^{t,a,k+1}_r\mathrm{d}W_r. 
    \end{equation*}
    By Assumption 2 and \Cref{le02}, $\mathbb{E}(\int^{\infty}_t|f(r,x^{t,a}_r,\gamma_1^k(r,x^{t,a}_r),\gamma_2^k(r,x^{t,a}_r))|\mathrm{d}r)^2<\infty$. Therefore, \Cref{le44} ensures the existence of deterministic measurable functions $\gamma_1^{k+1}$ and $\gamma_2^{k+1}$ such that
    \begin{equation*}
        y^{t,a,k+1}_s = \gamma_1^{k+1}(s,x^{t,a}_s), \quad z^{t,a,k+1}_s = \gamma_2^{k+1}(s,x^{t,a}_s),
    \end{equation*}
    where
    \begin{equation}
        \label{eq09}
        \gamma_1^{k+1}(t,a) = \mathbb{E}\int^{\infty}_{t}f(r,x^{t,a}_r,\gamma_1^k(r,x^{t,a}_r),\gamma_2^k(r,x^{t,a}_r))\mathrm{d}r.
    \end{equation}
    This completes the mathematical induction argument. Thus, for any $k\geq 0$, there exist deterministic measurable functions $\gamma_1^k$ and $\gamma_2^k$ such that
    \begin{equation*}
        y^{t,a,k}_s = \gamma_1^k(s,x^{t,a}_s), \quad \forall s\geq t,\quad \mathbb{P}\text{-a.s.},
    \end{equation*}
    and
    \begin{equation*}
        z^{t,a,k}_s = \gamma_2^k(s,x^{t,a}_s),\quad \mathrm{d}\mathbb{P} \otimes \mathrm{d}t\text{-a.e. on } \Omega \times [t,\infty).
    \end{equation*}
    Then, \Cref{le03} implies that $\lim_{k\to\infty} |y^{t,a,k}_s - y^{t,a}_s| \le \lim_{k\to\infty} \sup_{s\geq 0} |y^{t,a,k}_s - y^{t,a}_s| = 0$. For any $(t,a)\in [0,\infty)\times \mathbb{R}^n$, define $\gamma_1(t,a) := \limsup_{k\to \infty} \gamma_1^k(t,a)$. Hence,
    \begin{equation*}
        y^{t,a}_s = \lim_{k\to \infty} y^{t,a,k}_s = \limsup_{k\to \infty} \gamma_1^k(s, x^{t,a}_s) = \gamma_1(s, x^{t,a}_s),\quad \forall s\geq t,\quad \mathbb{P}\text{-a.s.}
    \end{equation*}
    Moreover, let $\Theta(t,a) := \limsup_{k\to \infty} \gamma_2^k(t,a)$ and $D:=\bigcap_{i=1}^{m}\bigcap_{j=1}^{n}\{(t,a)\in [0,\infty)\times \mathbb{R}^n|-\infty<\Theta_{ij}(t,a)<+\infty\}$, where $\Theta_{ij}(t,a)$ denotes the entry of the matrix $\Theta(t,a)$ in row $i$ and column $j$. We then define
    \begin{equation*}
        \gamma_2(t,a):=
        \begin{cases}
            \Theta(t,a), & (t,a)\in D,\\[2mm]
            0_{m\times n},& (t,a)\notin D,
        \end{cases}
    \end{equation*}
    where $0_{m\times n}$ denotes the $m\times n$ zero matrix. Therefore, by \Cref{le03}, we obtain
    \begin{equation*}
        z^{t,a}_s = \lim_{k\to \infty} z^{t,a,k}_s = \limsup_{k\to \infty} \gamma_2^k(s, x^{t,a}_s) = \Theta(s, x^{t,a}_s) = \gamma_2(s, x^{t,a}_s), \quad \mathrm{d}\mathbb{P} \otimes \mathrm{d}t\text{-a.e. on } \Omega \times [t,\infty).
    \end{equation*}
    
    Next, recalling \eqref{eq09}, it can be rewritten as
    \begin{equation*}
        \gamma_1^{k+1}(t,a) = \mathbb{E}\int^{\infty}_{t}f(r,x^{t,a}_r,y^{t,a,k}_r,z^{t,a,k}_r)\mathrm{d}r.
    \end{equation*}
    Based on Assumption 2 and \eqref{eq10_1}, one can easily verify
    \begin{equation*}
        \lim_{k\to \infty} \mathbb{E}\int^{\infty}_{t}|f(r,x^{t,a}_r,y^{t,a,k}_r,z^{t,a,k}_r)-f(r,x^{t,a}_r,y^{t,a}_r,z^{t,a}_r)|\mathrm{d}r = 0.
    \end{equation*}
    Hence, we conclude
    \begin{equation*}
        \gamma_1(t,a) = \limsup_{k\to \infty} \gamma_1^k(t,a) = \mathbb{E}\int^{\infty}_{t}f(r,x^{t,a}_r,y^{t,a}_r,z^{t,a}_r)\mathrm{d}r.
    \end{equation*}
    This completes the proof.
\end{proof}

\section{Existence of Solutions to Infinite Horizon BSDEs with Continuous Coefficients}
\label{se5}

We now set $(t,a) = (0,\tilde a)$ in \eqref{eq02}, where $\tilde a \in \mathbb{R}^n$. More precisely, we consider the following infinite-horizon BSDE:
\begin{equation}
    \label{eq11}
    y_s = \int^{\infty}_s f(r,x^{0,\tilde a}_r,y_r,z_r)\mathrm{d}r-\int^{\infty}_sz_r\mathrm{d}W_r, \quad s\geq 0,
\end{equation}
where $f = (f_1,\cdots,f_m)^{\top}:[0,\infty) \times \mathbb{R}^n \times \mathbb{R}^m \times \mathbb{R}^{m\times n} \rightarrow \mathbb{R}^m$ is a Borel measurable function. Then, we introduce the following assumption.

\noindent \textbf{Assumption 3.} (i) For any $(s,x) \in [0,\infty)\times \mathbb{R}^n$, $f(s,x,\cdot,\cdot)$ is continuous on $\mathbb{R}^m \times \mathbb{R}^{m\times n}$.
 
\noindent (ii) There exist a constant $p\geq 0$ and two functions $\beta_1,\beta_2:[0,\infty)\to (0,\infty)$, such that for any $(s,x,y,z) \in [0,\infty)\times \mathbb{R}^n \times \mathbb{R}^m \times \mathbb{R}^{m\times n}$, it holds that
\begin{equation*}
    |f(s,x,y,z)|\leq \beta_{1,s}(1+|x|^p+|y|) + \beta_{2,s}|z|.
\end{equation*}

\noindent (iii) $\int^{\infty}_0\beta_{1,s}\mathrm{d}s<\infty$ and $\int^{\infty}_0\beta^2_{2,s}\mathrm{d}s<\infty$. Moreover, there exists a constant $1<q<2$ such that, for any $T>0$, $\int^{T}_0\beta^q_{1,s}\mathrm{d}s<\infty$ and $\int^{T}_0\beta^{\frac{2q}{2-q}}_{2,s}\mathrm{d}s<\infty$.

\begin{remark}
    \label{re51}
    Although the latter part of Assumption 3-(iii) is somewhat specific in form, it is in fact not restrictive. Indeed, when $\beta_{1,\cdot}$ and $\beta_{2,\cdot}$ are continuous functions, they are bounded on every finite interval, and hence the corresponding local integrability conditions are automatically satisfied.
\end{remark}
 
In contrast to Assumption 2, Assumption 3 no longer requires the function $f$ to satisfy the deterministic Lipschitz condition. Consequently, the existence of a solution to \eqref{eq11} cannot be obtained from \cite{FJ2012}. To overcome this difficulty, we introduce the following important definition related to domination of probability measures.

\begin{definition}
    \label{de21}
    For a given $t \ge 0$ and $q>1$, a family of probability measures $\{\tilde \mu(s, \mathrm{d}x)\}_{s>t}$ on $\mathbb{R}^n$ is said to be {\bf locally $L^q$-dominated} by a family of probability measures $\{\mu(s, \mathrm{d}x)\}_{s>t}$ provided that, for any $T>t$, there exists a mapping $\phi^t_{T}: (t, T] \times \mathbb{R}^n \to \mathbb{R}^+$ satisfying the following conditions:
 
    \noindent (i) $\tilde \mu(s, \mathrm{d}x)\mathrm{d}s = \phi^t_{T}(s, x)\mu_0(s, \mathrm{d}x)\mathrm{d}s$ on $(t, T] \times \mathbb{R}^n$.
 
    \noindent (ii) For any $k\geq 1$ and $0<\delta\leq T-t $, we have $\phi^t_{T} \in L^q([t+\delta, T] \times [-k, k]^n; \mu_0(s, \mathrm{d}x)\mathrm{d}s)$.
\end{definition}

Next, for any $(t,a)\in [0,\infty)\times \mathbb{R}^n$, we let $\{\mu^{t, a}(s, \mathrm{d}x)\}_{s>t}$ be the family of probability measures on $\mathbb{R}^n$ of the solution $x^{t,a}_{\cdot}$ to \eqref{eq01}, which means that for any $A \in \mathcal{B}(\mathbb{R}^n)$, $\mu^{t, a}(s, A) =\mathbb P(x_s^{t,a} \in A)$. Thus, we can obtain the following important lemma.
\begin{lemma}
    \label{le22}
    Suppose that Assumption 1 holds. For any $q>1$ and $(t,a)\in [0,\infty)\times \mathbb{R}^n$, $\{\mu^{t, a}(s, \mathrm{d}x)\}_{s>t}$ is locally $L^q$-dominated by $\{\mu^{0, \tilde a}(s, \mathrm{d}x)\}_{s>t}$.
\end{lemma}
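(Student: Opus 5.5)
The plan is to reduce the problem to bounded coefficients by conjugating with the linear flow $e^{-As}$. Then I will use two-sided Aronson Gaussian bounds for the transition density, and compute the density ratio explicitly on compact sets.

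\textbf{Step 1: conjugation.} Set $\hat x^{t,a}_s:=e^{-As}x^{t,a}_s$. By It\^o's formula,
\begin{equation*}
\mathrm{d}\hat x_s=e^{-As}F(s,e^{As}\hat x_s)\,\mathrm{d}s+e^{-As}\sigma(s,e^{As}\hat x_s)\,\mathrm{d}W_s ,
\end{equation*}
so $\hat x$ solves an SDE with no linear term. On any finite window $[0,T]$ the matrices $e^{\pm As}$ are bounded by $e^{|A|T}$. Hence the drift $\hat F(s,y)=e^{-As}F(s,e^{As}y)$ is bounded, and the diffusion $\hat\sigma(s,y)=e^{-As}\sigma(s,e^{As}y)$ is bounded, uniformly elliptic by \Cref{re01}, and Lipschitz in $y$, hence H\"older. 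All constants depend only on $T$.

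\textbf{Step 2: Aronson bounds.} By Aronson's estimates \cite{A1967} on $[0,T]$, $\hat x^{t,a}_s$ has a transition density $\hat p(t,a;s,y)$ satisfying
\begin{equation*}
\frac{c_T}{(s-t)^{n/2}}e^{-\frac{|y-a|^2}{c_T(s-t)}}\le \hat p(t,a;s,y)\le \frac{C_T}{(s-t)^{n/2}}e^{-\frac{c_T|y-a|^2}{s-t}},\qquad 0\le t<s\le T .
\end{equation*}
I will note that $\hat x^{t,a}_t=e^{-At}a$, so the starting point is $e^{-At}a$ rather than $a$. The law of $x^{t,a}_s=e^{As}\hat x^{t,a}_s$ is the push-forward under the linear diffeomorphism $y\mapsto e^{As}y$. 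Its density is therefore
\begin{equation*}
p^{t,a}(s,x)=|\det e^{-As}|\;\hat p\big(t,e^{-At}a;s,e^{-As}x\big).
\end{equation*}
Since $e^{\pm As}$ is bounded on $[0,T]$, the same type of Gaussian bounds holds for $p^{t,a}(s,x)$, with $|x-e^{A(s-t)}a|$ comparable to $|e^{-As}x-e^{-At}a|$ up to $T$-dependent constants. Doing the same for $(0,\tilde a)$ gives a density $p^{0,\tilde a}(s,x)$ with a Gaussian lower bound for $s\in(0,T]$.

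\textbf{Step 3: the ratio and its $L^q$ bound.} Define $\phi^t_T(s,x):=p^{t,a}(s,x)/p^{0,\tilde a}(s,x)$ for $s\in(t,T]$. This is well defined because the lower bound makes $p^{0,\tilde a}>0$, and condition (i) of \Cref{de21} holds, with $\mu_0=\mu^{0,\tilde a}$. For (ii), fix $\delta$ and $k$. On $[t+\delta,T]\times[-k,k]^n$:
\begin{itemize}
\item the upper bound gives $p^{t,a}\le C_T\delta^{-n/2}$;
\item the lower bound gives $p^{0,\tilde a}(s,x)\ge c_T T^{-n/2}\exp(-C(k+|\tilde a|)^2/\min(s,1))$, which is bounded below uniformly when $t>0$, since then $s\ge\delta$.
\end{itemize}
Thus $\phi^t_T$ is bounded on this set, and $\int\!\!\int\phi^q\,p^{0,\tilde a}\,\mathrm{d}x\,\mathrm{d}s<\infty$ trivially.

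\textbf{Main obstacle.} I expect the hardest step to be making the Aronson bounds rigorous for the conjugated equation. One must check that the time-dependent coefficients $\hat F,\hat\sigma$ meet the hypotheses uniformly on $[0,T]$, including measurability in $s$; Aronson needs only bounded measurable coefficients in divergence form, or bounded H\"older diffusion for the nondivergence form. One must also track how the shifted starting point $e^{-At}a$ and the Jacobian enter the Gaussian exponents. The case $t=0$ needs separate care, because $p^{0,\tilde a}(s,\cdot)$ degenerates as $s\downarrow0$. There I would use the explicit Gaussian ratio
\begin{equation*}
\exp\Big(\frac{C|x-\tilde a|^2}{s}-\frac{c|x-a|^2}{s}\Big)
\end{equation*}
and check its integrability on $[\delta,T]$, which is immediate since $s\ge\delta$.
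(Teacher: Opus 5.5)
Your proposal is correct and follows essentially the same route as the paper: conjugate by the linear flow (the paper uses $e^{-A(s-t)}$ instead of your $e^{-As}$, a cosmetic difference), apply two-sided Aronson bounds, transfer them through the Jacobian $e^{-\mathrm{Tr}(A)(s-t)}$, and bound the density ratio on $[t+\delta,T]\times[-k,k]^n$. The $t=0$ case needs no separate care, and no restriction ``when $t>0$'' is needed, because $s\ge t+\delta\ge\delta$ on that set for every $t\ge 0$; this is exactly what the paper uses to bound $\tilde\rho^{0,\tilde a}$ from below.
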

\begin{proof}
    First, we define a stochastic process $\eta_{\cdot}^{t,a}$ such that for when $s>t$, $\eta_{s}^{t,a}=e^{-A(s-t)} x_s^{t,a}$ and when $0\leq s\leq t$, $\eta_{s}^{t,a}=a$. Hence, by applying the It\^{o} formula, we have
    \begin{equation*}
        \mathrm{d}\eta_{s}^{t,a} = \tilde F(s,\eta_{s}^{t,a})\mathrm{d}s + \tilde \sigma(s,\eta_{s}^{t,a})\mathrm{d}W_s, 
    \end{equation*} 
    where $\tilde F(s,x) = e^{-A(s-t)} F(s, e^{A(s-t)} x)$ and $\tilde \sigma(s,x) = e^{-A(s-t)} \sigma(s, e^{A(s-t)} x)$. According to \Cref{re01}, for any $t<s\leq T$ and $x \in \mathbb{R}^n$, it is easy to see that there exists a constant $K_{1,T}>0$ satisfying
    \begin{equation*}
        \left\{
            \begin{aligned}
                &|\tilde F(s,x)|\leq K_{1,T},\\
                &\frac{1}{K_{1,T}}I_{n\times n}\leq\tilde \sigma(s,x)\tilde \sigma^{\top}(s,x)\leq K_{1,T}I_{n\times n}.
            \end{aligned}
        \right.
    \end{equation*}
    Therefore, according to the Aronson estimates (see \cite{A1967}), $\eta_{s}^{t,a}$ has a probability density function $\rho^{t,a}(s,\cdot)$ which satisfies
    \begin{equation}
        \label{eq12}
        K_{2,T}(s-t)^{-\frac{n}{2}} \exp\left( - \frac{\lambda_1|x - a|^2}{s-t} \right) \le \rho^{t,a}(s,x) \le K_{3,T} (s-t)^{-\frac{n}{2}} \exp\left( - \frac{\lambda_2|x - a|^2}{s-t} \right),
    \end{equation}
    where $K_{2,T},K_{3,T},\lambda_1,\lambda_2$ are all constants. Since $x_{s}^{t,a}=e^{A(s-t)} \eta_s^{t,a}$, the probability density function $\tilde \rho^{t,a}(s,\cdot)$ of $x_{s}^{t,a}$ satisfies
    \begin{equation}
        \label{eq13}
        \tilde \rho^{t,a}(s,x) = \rho^{t,a}(s,e^{-A(s-t)}x)e^{-\mathrm{Tr}(A)(s-t)}.
    \end{equation}
    Combining \eqref{eq12} and \eqref{eq13}, for $k\geq 1$ and $0<\delta\leq T-t$, we deduce that for any $(s,x) \in [t+\delta, T] \times [-k, k]^n$,
    \begin{equation*}
        K_1(T-t)^{-\frac{n}{2}} e^{-|\mathrm{Tr}(A)|(T-t)} \exp\left( - \frac{\lambda_1 (e^{|A|(T-t)} \sqrt{n}k + |a|)^2}{\delta} \right) \leq \tilde \rho^{t,a}(s,x)\leq K_2  \delta^{-\frac{n}{2}}  e^{|\mathrm{Tr}(A)|(T-t)}.
    \end{equation*}
    Then, we define 
    \begin{equation*}
        \left\{
        \begin{aligned}
            &\phi^{t,a}_T(s,x):=\frac{\tilde \rho^{t,a}(s,x)}{\tilde \rho^{0,\tilde a}(s,x)},\quad (s,x) \in (t, T] \times \mathbb{R}^n,\\
            &\Gamma_1:= K_1T^{-\frac{n}{2}} e^{-|\mathrm{Tr}(A)|T} \exp\left( - \frac{\lambda_1 (e^{|A|T} \sqrt{n}k + |\tilde a|)^2}{\delta} \right),\quad \Gamma_2:= K_2  \delta^{-\frac{n}{2}}  e^{|\mathrm{Tr}(A)|(T-t)}.
        \end{aligned}
        \right.
    \end{equation*}
    As a result, it follows that
    \begin{equation*}
        \begin{aligned}
            &\int_{t+\delta}^{T} \int_{[-k, k]^n} |\phi^{t,a}_T(s, x)|^q \mu^{0, \tilde a}(s, \mathrm{d}x)\mathrm{d}s = \int_{t+\delta}^{T} \int_{[-k, k]^n} \frac{\left|\tilde \rho^{t,a}(s,x)\right|^q}{\left|\tilde \rho^{0,\tilde a}(s,x)\right|^{q-1}} \mathrm{d}x \mathrm{d}s\\
            &\qquad\qquad\qquad\qquad\qquad\qquad\qquad\qquad\text{ }\text{ }\leq \frac{\Gamma^{q-1}_2}{\Gamma^{q-1}_1}(T-t-\delta)< \infty.
        \end{aligned}
    \end{equation*}
    The proof is complete.
\end{proof}

We now present the main result of this section, namely, an existence theorem for solutions to multidimensional infinite-horizon BSDEs when the generator does not satisfy the Lipschitz condition.
\begin{theorem}
    \label{th24}
    Suppose that Assumptions 1 and 3 hold. Then \eqref{eq11} admits a solution $(y_\cdot,z_\cdot)\in S^2_{\mathcal{F}}(0,\infty;\mathbb{R}^m) \times L^2_{\mathcal{F}}(0,\infty;\mathbb{R}^{m\times n})$. Moreover, there exist two measurable functions $\gamma_1:[0,\infty)\times \mathbb{R}^n \to \mathbb{R}^m$ and $\gamma_2:[0,\infty)\times \mathbb{R}^n \to \mathbb{R}^{m\times n}$ such that
    \begin{equation*}
        y_s = \gamma_1(s,x^{0,\tilde a}_s),\quad \forall s\geq 0,\quad \mathbb{P}\text{-a.s.},
    \end{equation*}
    and
    \begin{equation*}
        z_s = \gamma_2(s,x^{0,\tilde a}_s),\quad \mathrm{d}\mathbb{P} \otimes \mathrm{d}t\text{-a.e. on } \Omega \times [0,\infty).
    \end{equation*}
\end{theorem}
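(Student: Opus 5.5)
We follow the strategy of Hamad\`ene, Lepeltier and Peng, adapted to the infinite horizon. First we approximate $f$ by generators $f^k$ that satisfy Assumption 2. Set
\[
f^k(s,x,y,z)=\inf_{(y',z')}\{f(s,x,y',z')+k\beta_{1,s}|y-y'|+k\beta_{2,s}|z-z'|\}
\]
componentwise, with a truncation in $x$ to keep the sup-inf convolution finite, or equivalently a mollification in $(y,z)$ weighted by $\beta$. This gives $f^k$ that are Lipschitz in $(y,z)$ with the deterministic coefficients $\zeta^k_{1,s}=k\beta_{1,s}$ and $\zeta^k_{2,s}=k\beta_{2,s}$. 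These coefficients are integrable and square integrable, respectively, by Assumption 3-(iii). The $f^k$ satisfy the growth bound of Assumption 3-(ii) with constants independent of $k$, and $f^k(s,x,\cdot,\cdot)\to f(s,x,\cdot,\cdot)$ uniformly on compacts by continuity. Assumption 2-(i) then follows from \Cref{le01}.

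For every $(t,a)$, Proposition 3.3 of \cite{FJ2012} gives a solution $(y^{k,t,a},z^{k,t,a})$. \Cref{le02} bounds it uniformly in $k$ by $C_p(1+|a|^{2p})$, and \Cref{th04} gives Borel functions $\gamma^k_1,\gamma^k_2$ with $y^{k,t,a}_s=\gamma^k_1(s,x^{t,a}_s)$ and $z^{k,t,a}_s=\gamma^k_2(s,x^{t,a}_s)$, valid simultaneously for all $(t,a)$.

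Second, we extract limits. Put $\bar f^k_s:=f^k(s,x^{0,\tilde a}_s,y^k_s,z^k_s)=:g^k(s,x^{0,\tilde a}_s)$, where $g^k(s,x)=f^k(s,x,\gamma_1^k(s,x),\gamma_2^k(s,x))$. By the growth bound, Hölder's inequality with exponents $2/q$ and $2/(2-q)$, and the uniform estimates, $g^k$ is bounded in $L^q([0,T]\times\mathbb{R}^n;\mu^{0,\tilde a}(s,\mathrm dx)\mathrm ds)$ for every $T$. This is exactly where the local integrability of $\beta_1^q$ and $\beta_2^{2q/(2-q)}$ enters. Similarly, $\gamma_2^k$ is bounded in $L^2(\mathrm d\mu^{0,\tilde a}\mathrm ds)$. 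By a diagonal argument over $T\in\mathbb{N}$ we get a subsequence and limits $g^k\rightharpoonup g$ weakly in $L^q_{loc}$ and $z^k\rightharpoonup z$ weakly in $L^2$.

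Third, we prove that $y^k$ converges strongly. By Itô's formula, for $k,l$ we have $\mathbb{E}\sup_s|y^k_s-y^l_s|^2+\mathbb{E}\int|z^k-z^l|^2\le C\,\mathbb{E}\int_0^\infty\langle y^k-y^l,\bar f^k-\bar f^l\rangle\,\mathrm ds$. We split this into $[0,\delta]$, $[T,\infty)$ and $[\delta,T]$. The piece on $[T,\infty)$ is small uniformly in $k$ because of the tails of $\int\beta_1$ and $\int\beta_2^2$. The piece on $[0,\delta]$ is small by Hölder's inequality and the uniform bounds. The key step is to show that $\gamma_1^k(s,\cdot)$ converges pointwise. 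Since $\gamma_1^k(t,a)=\mathbb{E}\int_t^\infty g^k(r,x^{t,a}_r)\,\mathrm dr$, we write the part over $[t+\delta,T]$ as $\int\int g^k\phi^{t,a}_T\,\mathrm d\mu^{0,\tilde a}\mathrm dr$. By \Cref{le22}, $\phi^{t,a}_T\in L^{q'}$ on compacts, with $q'$ the conjugate exponent, so this converges by weak convergence. The parts outside $[-k_0,k_0]^n$ are controlled uniformly via Chebyshev's inequality and the moment bounds of \Cref{le01}.

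Consequently $\gamma_1^k\to\gamma_1$ pointwise, and the functions are locally uniformly integrable. Dominated convergence then shows that $y^k_s=\gamma^k_1(s,x_s)$ converges in $L^2(\mathrm d\mathbb P\otimes\mathrm ds)$. Plugging this back into the Itô estimate gives convergence of $z^k$ in $L^2$ as well, once the cross term is controlled. We control it by restricting to the set $\{|y^k|+|z^k|\le N\}$, where $f^k\to f$ uniformly, and using uniform integrability on its complement. Hence $(y^k,z^k)\to(y,z)$ in $S^2\times L^2$, along a subsequence also $\mathrm d\mathbb P\otimes\mathrm ds$-a.e.

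Finally, $f^k(s,x_s,y^k_s,z^k_s)\to f(s,x_s,y_s,z_s)$ a.e. by the local uniform convergence $f^k\to f$ together with the continuity of $f$. Domination by $\beta_1(1+|x|^p+|y^k|)+\beta_2|z^k|$, which is uniformly integrable, lets us pass to the limit in the BSDE. The Markov representation follows by taking $\gamma_1=\limsup\gamma_1^k$ and $\gamma_2=\limsup\gamma_2^k$ (truncated to finite values, as in \Cref{th04}). The main obstacle is the third step, namely turning weak $L^q$ convergence of $g^k$ into pointwise convergence of $\gamma_1^k$ uniformly enough to get strong convergence. For this the domination from \Cref{le22} and the tail control on $[T,\infty)$ via $\int_T^\infty\beta$ are essential.
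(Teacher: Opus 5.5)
Your proposal is correct and follows essentially the same route as the paper. The shared steps are: the $\beta$-weighted inf-convolution $f^k$; the uniform bounds of \Cref{le02} together with the common Markov representation of \Cref{th04}; weak $L^q$ compactness, with diagonal extraction, of $g^k(s,x)=f^k(s,x,\gamma^k_1(s,x),\gamma^k_2(s,x))$; pointwise convergence of $\gamma^k_1(t,a)$ for every $(t,a)$ via the local $L^{q/(q-1)}$-domination of \Cref{le22}, splitting into $[t,t+\delta]$, the compact part of $[t+\delta,T]$, its complement, and $[T,\infty)$; It\^o's formula to make $z^k$ Cauchy in $L^2$; and a truncation to identify the limit generator.

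Two minor corrections. First, the growth bound already makes the inf-convolution finite, so no truncation in $x$ is needed, and a mollification is not equivalent because it would not produce global Lipschitz constants of the form $k\beta_{i,s}$. Second, the cross term $\mathbb{E}\int_0^T|y^k_s-y^l_s|\,|\bar f^k_s-\bar f^l_s|\,\mathrm ds$ is handled most simply, as in the paper, by H\"older's inequality: combine the uniform $L^q$ bound on $\bar f^k$ with convergence of $y^k$ in $L^{q/(q-1)}(\Omega\times[0,T])$. Dominated convergence gives this since $|\gamma^k_1(s,x)|\le C(1+|x|^p)$, and $L^2$ alone would not pair with an $L^q$ bound when $q<2$. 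The restriction to $\{|y^k|+|z^k|\le N\}$ is what is needed afterwards, to identify the limit of $\bar f^k$.
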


In order to prove \Cref{th24}, we need the following lemma. For each $i\geq 1$, define $f^i = (f^i_1,\cdots,f^i_m)^{\top}:[0,\infty) \times \mathbb{R}^n \times \mathbb{R}^m \times \mathbb{R}^{m\times n} \rightarrow \mathbb{R}^m$ by specifying its components as follows:
\begin{equation}
    \label{eq12_1}
    f^i_j(s,x,y,z) := \inf_{(\theta,\vartheta) \in \mathbb{R}^m \times \mathbb{R}^{m\times n}} \left\{ f_j(s,x,\theta ,\vartheta ) + i \beta_{1,s} |y-\theta| + i \beta_{2,s} |z-\vartheta| \right\},\quad j=1,\cdots,m,
\end{equation}
where $f_j$ denotes the $j$-th component of $f$ in \eqref{eq11}. The functions $\beta_{1,\cdot}$ and $\beta_{2,\cdot}$ are given in Assumption 3. The following lemma establishes the properties satisfied by the sequence $\{f^i\}_{i\geq 1}$ and its convergence to $f$.

\begin{lemma}
    \label{le21}
    Suppose that Assumptions 1 and 3 hold. The sequence $\{f^i\}_{i\geq 1}$ possesses the following properties:
 
    \noindent (i) For any $i\geq 1$ and $(s,x,y,z) \in [0,\infty)\times \mathbb{R}^n \times \mathbb{R}^m \times \mathbb{R}^{m\times n}$,
    \begin{equation*}
        |f^i(s,x,y,z)|\leq \sqrt{m}\beta_{1,s}(1+|x|^p+|y|) + \sqrt{m}\beta_{2,s}|z|.
    \end{equation*} 
    
    \noindent (ii) For any $(t,a) \in [0,\infty)\times \mathbb{R}^n$ and $i\geq 1$, $\mathbb{E}(\int^{\infty}_0|f^i(s,x^{t,a}_s,0,0)|\mathrm{d}s)^2<\infty$.  

    \noindent (iii) For $i\geq 1$, there exist functions $\tilde \zeta^i_1,\tilde \zeta^i_2:[0,\infty)\to (0,\infty)$ satisfying $\int^{\infty}_0\tilde \zeta^i_{1,s}\mathrm{d}s<\infty$ and $\int^{\infty}_0(\tilde \zeta^i_{2,s})^2\mathrm{d}s<\infty$, such that for any $(s,x,y_1,y_2,z_1,z_2) \in [0,\infty)\times \mathbb{R}^n \times \mathbb{R}^m \times \mathbb{R}^m \times \mathbb{R}^{m\times n} \times \mathbb{R}^{m\times n}$,
    \begin{equation*}
        |f^i(s,x,y_1,z_1)-f^i(s,x,y_2,z_2)|\leq \tilde \zeta^i_{1,s}|y_1-y_2| + \tilde \zeta^i_{2,s}|z_1-z_2|.
    \end{equation*}

    \noindent (iv) For any $(s,x) \in [0,\infty)\times \mathbb{R}^n$ and compact set $\mathcal{K} \subset \mathbb{R}^m \times \mathbb{R}^{m\times n}$, we obtain
    \begin{equation*}
        \lim_{i\to \infty}\sup_{(y,z)\in \mathcal{K}}|f^i(s,x,y,z)-f(s,x,y,z)|=0.
    \end{equation*}
\end{lemma}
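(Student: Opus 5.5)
The plan is the classical inf-convolution (Lepeltier--San Mart\'in type) argument, carried out componentwise and adapted to the time-dependent weights $\beta_{1,s},\beta_{2,s}$. Fix $j\in\{1,\dots,m\}$ and $(s,x)$. The key preliminary observation is a lower bound for the function inside the infimum in \eqref{eq12_1}. By Assumption 3-(ii) and $|f_j|\le|f|$,
\[
f_j(s,x,\theta,\vartheta)+i\beta_{1,s}|y-\theta|+i\beta_{2,s}|z-\vartheta|\ \ge\ -\beta_{1,s}(1+|x|^p+|\theta|)-\beta_{2,s}|\vartheta|+i\beta_{1,s}|y-\theta|+i\beta_{2,s}|z-\vartheta|.
\]
By the triangle inequality $|\theta|\le|y|+|y-\theta|$, $|\vartheta|\le|z|+|z-\vartheta|$ and $i\ge1$, the right-hand side is at least $-\beta_{1,s}(1+|x|^p+|y|)-\beta_{2,s}|z|$. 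Choosing $(\theta,\vartheta)=(y,z)$ gives the upper bound $f^i_j\le f_j\le \beta_{1,s}(1+|x|^p+|y|)+\beta_{2,s}|z|$. Hence $f^i_j$ is finite and $|f^i_j|\le \beta_{1,s}(1+|x|^p+|y|)+\beta_{2,s}|z|$; summing squares over $j$ gives (i). Measurability of $f^i_j$ in $(s,x,y,z)$ follows because, by continuity of $f_j$ in $(\theta,\vartheta)$ (Assumption 3-(i)), the infimum may be taken over a countable dense set.

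For (ii), apply (i) at $(y,z)=(0,0)$ to get $|f^i(s,x^{t,a}_s,0,0)|\le\sqrt m\,\beta_{1,s}(1+|x^{t,a}_s|^p)$. Minkowski's integral inequality gives
\[
\mathbb E\Big(\int_0^\infty|f^i(s,x^{t,a}_s,0,0)|\,\mathrm ds\Big)^2\le m\Big(\int_0^\infty\beta_{1,s}\big(1+(\mathbb E|x^{t,a}_s|^{2p})^{1/2}\big)\mathrm ds\Big)^2 .
\]
This is finite by \Cref{le01} and Assumption 3-(iii).

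For (iii), each $f^i_j(s,x,\cdot,\cdot)$ is an infimum of functions that are $(i\beta_{1,s},i\beta_{2,s})$-Lipschitz in $(y,z)$, and the infimum is finite. So $|f^i_j(s,x,y_1,z_1)-f^i_j(s,x,y_2,z_2)|\le i\beta_{1,s}|y_1-y_2|+i\beta_{2,s}|z_1-z_2|$. Summing over components, (iii) holds with $\tilde\zeta^i_{1}=\sqrt m\, i\beta_{1}$ and $\tilde\zeta^i_{2}=\sqrt m\, i\beta_{2}$, and the integrability conditions follow from Assumption 3-(iii).

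The main obstacle is (iv), the locally uniform convergence. Fix $(s,x)$ and a compact $\mathcal K$. Since $\beta_{1,s},\beta_{2,s}>0$ are fixed numbers here, this is a purely deterministic statement.
\begin{enumerate}
\item First, $f^i_j$ is nondecreasing in $i$ and bounded above by $f_j$.
\item Next, localise the near-minimisers. Let $(\theta_i,\vartheta_i)$ be such that $f_j(\theta_i,\vartheta_i)+i\beta_1|y-\theta_i|+i\beta_2|z-\vartheta_i|\le f^i_j(y,z)+1/i$. Combining this with the two-sided growth bound above yields
\[
(i-1)\big(\beta_1|y-\theta_i|+\beta_2|z-\vartheta_i|\big)\le 2\beta_1(1+|x|^p+|y|)+2\beta_2|z|+1 .
\]
Hence $|y-\theta_i|+|z-\vartheta_i|\le C_{\mathcal K}/(i-1)$ uniformly for $(y,z)\in\mathcal K$.
\item So all near-minimisers lie in a fixed compact neighbourhood $\mathcal K'$ of $\mathcal K$, on which $f_j$ is uniformly continuous with modulus $\omega$.
\item Then $0\le f_j(y,z)-f^i_j(y,z)\le f_j(y,z)-f_j(\theta_i,\vartheta_i)+1/i\le \omega(C_{\mathcal K}/(i-1))+1/i\to0$, uniformly on $\mathcal K$.
\end{enumerate}
Summing over $j$ gives (iv).
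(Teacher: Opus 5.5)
Your proposal is correct and follows the paper's proof essentially step for step. The steps match: the same two-sided growth bound via $|\theta|\le|y|+|y-\theta|$ for (i); the infimum-of-Lipschitz-functions argument for (iii) with $\tilde\zeta^i_k=i\sqrt m\,\beta_k$; and for (iv), the localisation of $1/i$-near-minimisers through $(i-1)\big(\beta_{1,s}|y-\theta_i|+\beta_{2,s}|z-\vartheta_i|\big)\le C$, followed by continuity of $f_j$. The only differences are cosmetic: you use Minkowski's integral inequality in (ii) where the paper expands the square as a double integral, and you make explicit two points the paper leaves implicit, namely the Borel measurability of $f^i_j$ (via a countable dense set) and the uniform continuity of $f_j$ on a compact neighbourhood of $\mathcal K$.
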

\begin{proof}
    First, we note that if (i) holds, then (ii) follows immediately. Indeed, combining (i) with \Cref{le01}, we deduce
    \begin{equation*}
        \begin{aligned}
            &\mathbb{E}\left(\int^{\infty}_0\big|f^i(s,x^{t,a}_s,0,0)\big|\mathrm{d}s\right)^2 \leq m\mathbb{E}\left(\int^{\infty}_0\beta_{1,s}(1+|x^{t,a}_s|^p)\mathrm{d}s\right)^2\\
            &\qquad\qquad\qquad\qquad\qquad\qquad\text{ }=m\int^{\infty}_0\int^{\infty}_0\beta_{1,r}\beta_{1,s}\mathbb{E}\big[(1+|x^{t,a}_r|^p)(1+|x^{t,a}_s|^p)\big]\mathrm{d}r\mathrm{d}s\\
            &\qquad\qquad\qquad\qquad\qquad\qquad\text{ }\leq \left(2m+2m\sup_{s\geq 0}\mathbb{E}\big[|x^{t,a}_s|^{2p}\big]\right)\left(\int^{\infty}_0\beta_{1,s}\mathrm{d}s\right)^2 < \infty.
        \end{aligned}
    \end{equation*}
    Therefore, it remains to prove (i), (iii), and (iv) in turn. To this end, we first investigate the component $f^i_j$ and then derive the corresponding conclusions for $f^i$.

    \textbf{Step 1.} We prove (i). For any $1\leq j \leq m$, by Assumption 3-(ii), we have $f_j(s,x,\theta,\vartheta) \ge -\beta_{1,s}(1+|x|^p+|\theta|) - \beta_{2,s}|\vartheta|$. Substituting this into \eqref{eq12_1} yields
    \begin{equation*}
        \begin{aligned}
            &f^i_j(s,x,y,z) \ge -\beta_{1,s}(1+|x|^p) + \inf_{(\theta,\vartheta) \in \mathbb{R}^m \times \mathbb{R}^{m\times n}} \left\{ \beta_{1,s} (i|y-\theta| - |\theta|) + \beta_{2,s} (i|z-\vartheta| - |\vartheta|) \right\} \\
            &\qquad\qquad \geq -\beta_{1,s}(1+|x|^p) + \inf_{(\theta,\vartheta) \in \mathbb{R}^m \times \mathbb{R}^{m\times n}} \left\{ \beta_{1,s} ((i-1)|y-\theta| - |y|) + \beta_{2,s} ((i-1)|z-\vartheta| - |z|) \right\}\\
            &\qquad\qquad \geq -\beta_{1,s}(1+|x|^p+|y|) - \beta_{2,s}|z|.
        \end{aligned}
    \end{equation*}
    Moreover, according to \eqref{eq12_1} and Assumption 3-(ii), it is easy to see that $f^i_j(s,x,y,z) \leq f_j(s,x,y,z) \leq \beta_{1,s}(1+|x|^p+|y|) + \beta_{2,s}|z|$. Therefore,
    \begin{equation}
        \label{eq13_1}
        |f^i_j(s,x,y,z)| \leq \beta_{1,s}(1+|x|^p+|y|) + \beta_{2,s}|z|,\quad j=1,\cdots,m.
    \end{equation}
    Then, we have
    \begin{equation*}
        |f^i(s,x,y,z)| = \sqrt{\sum^m_{j=1}|f^i_j(s,x,y,z)|^2} \leq \sqrt{m}\beta_{1,s}(1+|x|^p+|y|) + \sqrt{m}\beta_{2,s}|z|.
    \end{equation*}
    
    \textbf{Step 2.} We establish (iii). For any $(s,x,y_1,y_2,z_1,z_2) \in [0,\infty)\times \mathbb{R}^n \times \mathbb{R}^m \times \mathbb{R}^m \times \mathbb{R}^{m\times n} \times \mathbb{R}^{m\times n}$ and $1\leq j \leq m$, it follows from \eqref{eq12_1} that
    \begin{equation*}
        \begin{aligned}
            &f^i_j(s,x,y_1,z_1) \leq \inf_{(\theta,\vartheta) \in \mathbb{R}^m \times \mathbb{R}^{m\times n}} \left\{ f_j(s,x,\theta ,\vartheta ) + i \beta_{1,s} |y_2-\theta| + i \beta_{2,s} |z_2-\vartheta| \right\}\\
            &\qquad\qquad\qquad\qquad +  i \beta_{1,s} |y_1-y_2| + i \beta_{2,s} |z_1-z_2|\\
            &\qquad\qquad\qquad = f^i_j(s,x,y_2,z_2) +  i \beta_{1,s} |y_1-y_2| + i \beta_{2,s} |z_1-z_2|.
        \end{aligned}
    \end{equation*}
    Similarly, $f^i_j(s,x,y_2,z_2) \leq f^i_j(s,x,y_1,z_1) + i \beta_{1,s} |y_1-y_2| + i \beta_{2,s} |z_1-z_2|$ holds. We have
    \begin{equation*}
        |f^i_j(s,x,y_1,z_1)-f^i_j(s,x,y_2,z_2)| \leq i \beta_{1,s} |y_1-y_2| + i \beta_{2,s} |z_1-z_2|,\quad j=1,\cdots,m.
    \end{equation*}
    Hence, 
    \begin{equation*}
        \begin{aligned}
            &|f^i(s,x,y_1,z_1)-f^i(s,x,y_2,z_2)| = \sqrt{\sum^m_{j=1}|f^i_j(s,x,y_1,z_1)-f^i_j(s,x,y_2,z_2)|^2}\\
            &\qquad\qquad\qquad\qquad\qquad\qquad\quad\text{ }\text{ }\leq i\sqrt{m} \beta_{1,s} |y_1-y_2| + i\sqrt{m} \beta_{2,s} |z_1-z_2|.
        \end{aligned}
    \end{equation*}

    \textbf{Step 3.} We prove (iv). Without loss of generality, we consider the case where $i\geq 2$. For any $(s,x,y,z) \in [0,\infty)\times \mathbb{R}^n\times \mathcal{K}$ and $1\leq j \leq m$, where $\mathcal{K} \subset \mathbb{R}^m \times \mathbb{R}^{m\times n}$ is a compact set, there exists $(\theta^i_j(s,x,y,z),\vartheta^i_j(s,x,y,z)) \in \mathbb{R}^m \times \mathbb{R}^{m\times n}$ (hereafter denoted simply by $\theta^{i,j}_{(y,z)}$ and $\vartheta^{i,j}_{(y,z)}$, respectively) satisfying
    \begin{equation}
        \label{eq14}
        f^i_j(s,x,y,z) > f_j(s,x,\theta^{i,j}_{(y,z)},\vartheta^{i,j}_{(y,z)}) + i\beta_{1,s}|y-\theta^{i,j}_{(y,z)}| + i\beta_{2,s}|z-\vartheta^{i,j}_{(y,z)}| - \frac{1}{i}.
    \end{equation}
    Combined with \eqref{eq13_1} and Assumption 3-(ii), we get
    \begin{equation*}
        (i-1)\left[ \beta_{1,s}|y-\theta^{i,j}_{(y,z)}| + \beta_{2,s}|z-\vartheta^{i,j}_{(y,z)}| \right] < 2\beta_{1,s}(1+|x|^p+|y|) + 2\beta_{2,s}|z| + \frac{1}{i} \leq C,
    \end{equation*}
    where the constant $C>0$ depends on $s,x,\mathcal{K}$. Then, it can be obtained that
    \begin{equation}
        \label{eq15}
        |y-\theta^{i,j}_{(y,z)}|<\frac{C}{\beta_{1,s}(i-1)},\quad |z-\vartheta^{i,j}_{(y,z)}|<\frac{C}{\beta_{2,s}(i-1)}.
    \end{equation}
    Consequently,
    \begin{equation*}
        \lim_{i\to \infty}\theta^{i,j}_{(y,z)}=y,\quad  \lim_{i\to \infty}\vartheta^{i,j}_{(y,z)}=z.
    \end{equation*}
    Furthermore, from \eqref{eq15}, there exists a constant $\tilde C$ depending on $s,x,\mathcal{K}$ such that
    \begin{equation*}
        \sup_{i\geq 2,(y,z) \in \mathcal{K}} |\theta^{i,j}_{(y,z)}|\leq \tilde C,\quad \sup_{i\geq 2,(y,z) \in \mathcal{K}} |\vartheta^{i,j}_{(y,z)}|\leq \tilde C.
    \end{equation*}
    By Assumption 3-(i), one can obtain
    \begin{equation}
        \label{eq16}
        \lim_{i\to \infty}\sup_{(y,z) \in \mathcal{K}} |f_j(s,x,\theta^{i,j}_{(y,z)},\vartheta^{i,j}_{(y,z)})-f_j(s,x,y,z)|=0.
    \end{equation}
    Next, define $\varpi^i_j:=\sup_{(y,z)\in \mathcal{K}}|f^i_j(s,x,y,z)-f_j(s,x,y,z)|$. Combining \eqref{eq12_1}, \eqref{eq14}, and \eqref{eq16}, we conclude
    \begin{equation*}
        \begin{aligned}
            &\quad \text{ }\lim_{i\to \infty}\varpi^i_j\\
            &=\lim_{i\to \infty}\sup_{(y,z)\in \mathcal{K}}\left\{f_j(s,x,y,z)-f^i_j(s,x,y,z)\right\}\\
            &\leq \lim_{i\to \infty}\left\{\sup_{(y,z)\in \mathcal{K}}|f_j(s,x,\theta^{i,j}_{(y,z)},\vartheta^{i,j}_{(y,z)})-f_j(s,x,y,z)|+\frac{1}{i}\right\}\\
            &= 0,\qquad j=1,\cdots,m.
        \end{aligned}
    \end{equation*}
    Thus, 
    \begin{equation*}
        \lim_{i\to \infty}\sup_{(y,z)\in \mathcal{K}}|f^i(s,x,y,z)-f(s,x,y,z)| \leq \lim_{i\to \infty} \sqrt{\sum^m_{j=1}|\varpi^i_j|^2} = 0.
    \end{equation*}
    The proof is complete.
\end{proof}

We now proceed with the proof of \Cref{th24}.

\begin{proof}[Proof of \Cref{th24}]
    We first consider the following sequence of infinite-horizon BSDEs:
    \begin{equation}
        \label{eq19}
        y^i_s = \int^{\infty}_s f^i(r,x^{0,\tilde a}_r,y^i_r,z^i_r)\mathrm{d}r-\int^{\infty}_sz^i_r\mathrm{d}W_r, \quad s\geq 0,\quad i\geq 1,
    \end{equation}
    where the generator $f^i = (f^i_1,\cdots,f^i_m)^{\top}$ is defined in \eqref{eq12_1}. By \Cref{le21}, equation \eqref{eq19} admits a unique solution $(y^i_\cdot,z^i_\cdot)\in S^2_{\mathcal{F}}(0,\infty;\mathbb{R}^m) \times L^2_{\mathcal{F}}(0,\infty;\mathbb{R}^{m\times n})$. Combining this result with \Cref{le02}, for any $i\geq 1$,
    \begin{equation}
        \label{eq20}
        \mathbb{E}\left[\sup_{s\geq 0}|y^i_s|^2\right]+\mathbb{E}\int^{\infty}_0|z^i_s|^2\mathrm{d}s \leq \tilde C_1(1+|\tilde a|^{2p}),
    \end{equation}
    where the constant $\tilde C_1>0$ is independent of $i$, and $p>0$ is given by Assumption 3-(ii). Moreover, by \Cref{th04}, there exist deterministic measurable functions $\gamma^i_1$ and $\gamma^i_2$ such that $y^i_s = \gamma^i_1(s,x^{0,\tilde a}_s)$ and $z^i_s = \gamma^i_2(s,x^{0,\tilde a}_s)$, where
    \begin{equation*}
        \gamma^i_1(t,a) = \mathbb{E}\int^{\infty}_t\Xi^i(r,x^{t,a}_r)\mathrm{d}r,\quad \Xi^i(t,a) = f^i(t,a,\gamma^i_1(t,a),\gamma^i_2(t,a)),\quad \forall(t,a)\in [0,\infty)\times \mathbb{R}^n.
    \end{equation*} 
    In addition, \Cref{le02} and \Cref{th04} imply that, for any $i\geq 1$ and $(t,a)\in [0,\infty)\times \mathbb{R}^n$,
    \begin{equation} 
        \label{eq21}
        \mathbb{E}\left[\sup_{s\geq t}|\gamma^i_1(s,x^{t,a}_s)|^2\right]+\mathbb{E}\int^{\infty}_t|\gamma^i_2(s,x^{t,a}_s)|^2\mathrm{d}s \leq \tilde C_2(1+|a|^{2p}),
    \end{equation} 
    where the constant $\tilde C_2>0$ is independent of $i,t,$ and $a$. This also implies that, for any $(t,a)\in [0,\infty)\times \mathbb{R}^n$,
    \begin{equation}
        \label{eq22_1}
        |\gamma^i_1(t,a)|^2\leq \tilde C_2(1+|a|^{2p}).
    \end{equation}
    Furthermore, 
    \begin{equation}
        \label{eq22_2}
        \sup_{i\geq 1} |y^i_s|^2 = \sup_{i\geq 1} |\gamma^i_1(s,x^{0,\tilde a}_s)|^2\leq \tilde C_2(1+|x^{0,\tilde a}_s|^{2p}), \quad \forall s\geq 0,\quad \mathbb{P}\text{-a.s.}
    \end{equation}
    Next, we establish the convergence of $y^i_\cdot$, $z^i_\cdot$, and $f^i$ in \eqref{eq19}.

    \textbf{Step 1.} We establish the convergence of $\{y^i_{\cdot}\}_{i\geq 1}$. First, consider the sequence $\{\Xi^i\}_{i\geq 1}$. By \eqref{eq20}, \Cref{le01}, \Cref{le21}-(i), and Assumption 3-(iii), for any $T>0$ and $i\geq 1$, 
    \begin{equation*}
        \begin{aligned}
            &\quad\text{ }\int^T_0\int_{\mathbb{R}^n}|\Xi^i(s,x)|^q\mu^{0, \tilde a}(s, \mathrm{d}x)\mathrm{d}s = \mathbb{E}\int^T_0|\Xi^i(s,x^{0,\tilde a}_s)|^q\mathrm{d}s = \mathbb{E}\int^T_0|f^i(s,x^{0,\tilde a}_s,y^i_s,z^i_s)|^q\mathrm{d}s\\
            &\leq 3^{q-1}m^{\frac{q}{2}}\left(\mathbb{E}\int^T_0\beta_{1,s}^q\big(1+|x_s^{0,\tilde a}|^p\big)^q\mathrm{d}s+\mathbb{E}\int^T_0\beta_{1,s}^q|y^i_s|^q\mathrm{d}s+\mathbb{E}\int^T_0\beta_{2,s}^q|z^i_s|^q\mathrm{d}s\right)\\
            &\leq 6^{q-1}m^{\frac{q}{2}}\left(\int^T_0\beta^q_{1,s}\mathrm{d}s\right)\left[1+\sup_{s\geq 0}\mathbb{E}|x_s^{0,\tilde a}|^{pq}+\left(\mathbb{E}\left[\sup_{s\geq 0}|y^i_s|^2\right]\right)^{\frac{q}{2}}\right]\\
            &\quad+ 3^{q-1}m^{\frac{q}{2}}\left(\int^T_0\beta^{\frac{2q}{2-q}}_{2,s}\mathrm{d}s\right)^{\frac{2-q}{2}}\left(\mathbb{E}\int^{\infty}_0|z^i_s|^2\mathrm{d}s\right)^{\frac{q}{2}}\\
            &\leq \bar C_{1,T}(1+|\tilde a|^{pq}),
        \end{aligned}
    \end{equation*}
    where $\bar C_{1,T}>0$ is independent of $i$, and $1<q<2$ is given by Assumption 3-(iii). Therefore, for each fixed $T\geq 0$, the Banach--Alaoglu theorem yields a subsequence of $\{\Xi^i\}_{i\geq 1}$ that converges weakly in $L^q([0,T]\times \mathbb{R}^n;\mu^{0, \tilde a}(s, \mathrm{d}x)\mathrm{d}s)$. Since every $T\geq 0$ is bounded above by some positive integer, a diagonal procedure argument allows us to construct a subsequence $\{\Xi^{i_j}\}_{j\geq 1}$ of $\{\Xi^i\}_{i\geq 1}$ and a deterministic function $\Xi:[0,\infty)\times \mathbb{R}^n \to \mathbb{R}^m$ such that, for any $T\geq 0$,
    \begin{equation}
        \label{eq22}
        \Xi^{i_j} \to \Xi \text{ weakly in } L^q([0,T]\times \mathbb{R}^n;\mu^{0, \tilde a}(s, \mathrm{d}x)\mathrm{d}s).
    \end{equation}

    For notational simplicity, we continue to write $i$ in place of $i_j$. We now show that, for any $(t,a)\in [0,\infty)\times \mathbb{R}^n$, $\{\gamma^i_1(t,a)\}_{i\geq 1}$ is a Cauchy sequence in $\mathbb{R}^m$. Indeed, for any $0<\delta \leq 1$, $T\geq t+\delta$, and $k,i,j \geq 1$,
    \begin{equation}
        \label{eq23}
        \begin{aligned}
            &|\gamma^i_1(t,a)-\gamma^j_1(t,a)| = \left|\mathbb{E}\int^{\infty}_t\big[\Xi^i(s,x^{t,a}_s)-\Xi^j(s,x^{t,a}_s)\big]\mathrm{d}s\right|\\
            &\qquad\qquad\qquad\qquad\text{ }\leq \mathbb{E}\int^{t+\delta}_t\big|\Xi^i(s,x^{t,a}_s)-\Xi^j(s,x^{t,a}_s)\big|\mathrm{d}s\\
            &\qquad\qquad\qquad\qquad\quad\text{ }+\left|\mathbb{E}\int^{T}_{t+\delta}\big(\Xi^i(s,x^{t,a}_s)-\Xi^j(s,x^{t,a}_s)\big)\mathbbm{1}_{\{|x^{t,a}_s|\leq k\}}\mathrm{d}s\right|\\
            &\qquad\qquad\qquad\qquad\quad\text{ }+\mathbb{E}\int^{T}_{t+\delta}\big|\Xi^i(s,x^{t,a}_s)-\Xi^j(s,x^{t,a}_s)\big|\mathbbm{1}_{\{|x^{t,a}_s|> k\}}\mathrm{d}s\\
            &\qquad\qquad\qquad\qquad\quad\text{ }+ \mathbb{E}\int^{\infty}_T\big|\Xi^i(s,x^{t,a}_s)-\Xi^j(s,x^{t,a}_s)\big|\mathrm{d}s\\
            &\qquad\qquad\qquad\qquad\text{ }= (I)+(II)+(III)+(IV).
        \end{aligned} 
    \end{equation}
    We estimate the terms $(I)$--$(IV)$ in turn. For $(I)$, it follows from Assumption 3-(iii), \Cref{le21}-(i), and \eqref{eq21} that
    \begin{equation}
        \label{eq24}
        \begin{aligned}
            &(I) = \mathbb{E}\int^{t+1}_t\big|\Xi^i(s,x^{t,a}_s)-\Xi^j(s,x^{t,a}_s)\big|\mathbbm{1}_{\{t\leq s\leq t+\delta\} }\mathrm{d}s\\
            &\leq \delta^{\frac{q-1}{q}}\left(\mathbb{E}\int^{t+1}_t\big|\Xi^i(s,x^{t,a}_s)-\Xi^j(s,x^{t,a}_s)\big|^q\mathrm{d}s\right)^{\frac{1}{q}}\\
            &\leq \sqrt{m}\delta^{\frac{q-1}{q}} \!\left\{2\left(\int^{t+1}_t\!\beta^q_{1,s}\mathrm{d}s\right)^{\frac{1}{q}}\!\left[4\!+\!4\sup_{s\geq 0}\mathbb{E}|x_s^{t,a}|^{2p}\!+\!\mathbb{E}\left[\sup_{s\geq t}|\gamma^i_1(s,x^{t,a}_s)|^2\right]\!+\!\mathbb{E}\left[\sup_{s\geq t}|\gamma^j_1(s,x^{t,a}_s)|^2\right]\right]^{\frac{1}{2}}\right.\\
            &\qquad\qquad\qquad+\left.\left(\int^{t+1}_t\beta^{\frac{2q}{2-q}}_{2,s}\mathrm{d}s\right)^{\frac{2-q}{2q}}\left[\left(\mathbb{E}\int^{\infty}_t|\gamma^i_2(s,x^{t,a}_s)|^2\mathrm{d}s\right)^{\frac{1}{2}}+\left(\mathbb{E}\int^{\infty}_t|\gamma^j_2(s,x^{t,a}_s)|^2\mathrm{d}s\right)^{\frac{1}{2}}\right]\right\}\\
            &\leq \tilde C_3 \delta^{\frac{q-1}{q}},
        \end{aligned}
    \end{equation}
    where $\tilde C_3>0$ is a constant. For $(II)$, it follows from \Cref{de21} and \Cref{le22} that $\{\mu^{t, a}(s, \mathrm{d}x)\}_{s>t}$ is locally $L^{\frac{q}{q-1}}$-dominated by $\{\mu^{0, \tilde a}(s, \mathrm{d}x)\}_{s>t}$. Therefore, there exists $\phi^{t,a}_T \in L^{\frac{q}{q-1}}([t+\delta, T] \times [-k, k]^n; \mu^{0, \tilde a}(s, \mathrm{d}x)\mathrm{d}s)$ such that
    \begin{equation*}
        \begin{aligned}
            &\quad\text{ } \mathbb{E}\int^{T}_{t+\delta}\big(\Xi^i(s,x^{t,a}_s)-\Xi^j(s,x^{t,a}_s)\big)\mathbbm{1}_{\{|x^{t,a}_s|\leq k\}}\mathrm{d}s\\
            &=\int^{T}_{t+\delta}\int_{\mathbb{R}^n}\big(\Xi^i(s,x)-\Xi^j(s,x)\big)\mathbbm{1}_{\{|x|\leq k\}}\mu^{t, a}(s, \mathrm{d}x)\mathrm{d}s\\
            &=\int^{T}_{t+\delta}\int_{\mathbb{R}^n}\big(\Xi^i(s,x)-\Xi^j(s,x)\big)\mathbbm{1}_{\{|x|\leq k\}}\phi^{t, a}_T(s,x)\mu^{0,\tilde a}(s, \mathrm{d}x)\mathrm{d}s.
        \end{aligned}
    \end{equation*}
    Combining this relation with \eqref{eq22}, we obtain
    \begin{equation}
        \label{eq25}
        (II) = \left|\int^{T}_{t+\delta}\int_{\mathbb{R}^n}\big(\Xi^i(s,x)-\Xi^j(s,x)\big)\mathbbm{1}_{\{|x|\leq k\}}\phi^{t, a}_T(s,x)\mu^{0,\tilde a}(s, \mathrm{d}x)\mathrm{d}s\right|\xrightarrow{i,j\to \infty}0.
    \end{equation}
    For $(III)$,  by Markov's inequality and an argument similar to that used in deriving \eqref{eq24}, it follows that
    \begin{equation}
        \label{eq26}
        \begin{aligned}
            &(III) \leq \left(\int^T_{t+\delta}\mathbb{P}(|x^{t,a}_s|> k)\mathrm{d}s\right)^{\frac{q-1}{q}}\left(\mathbb{E}\int^T_{t+\delta}\big|\Xi^i(s,x^{t,a}_s)-\Xi^j(s,x^{t,a}_s)\big|^q\mathrm{d}s\right)^{\frac{1}{q}}\\
            &\qquad\text{ }\leq \left(\int^T_{t+\delta}\frac{\mathbb{E}|x^{t,a}_s|^p}{k^p}\mathrm{d}s\right)^{\frac{q-1}{q}}\left(\mathbb{E}\int^T_{t+\delta}\big|\Xi^i(s,x^{t,a}_s)-\Xi^j(s,x^{t,a}_s)\big|^q\mathrm{d}s\right)^{\frac{1}{q}}\\
            &\qquad\text{ }\leq \bar C_{2,T}k^{-\frac{p(q-1)}{q}},
        \end{aligned}
    \end{equation}
    where $\bar C_{2,T}>0$ depends on $T$. Finally, for $(IV)$, \Cref{le01}, Assumption 3-(iii), and \eqref{eq21} imply the existence of a constant $\tilde C_4>0$ such that
    \begin{equation}
        \label{eq27}
        \begin{aligned}
            &(IV)=\mathbb{E}\int^{\infty}_T\big|\Xi^i(s,x^{t,a}_s)-\Xi^j(s,x^{t,a}_s)\big|\mathrm{d}s\\
            &\leq \sqrt{m}\left\{2+2\sup_{s\geq 0}\mathbb{E}|x^{t,a}_s|^p+\left(\mathbb{E}\left[\sup_{s\geq t}|\gamma^i_1(s,x^{t,a}_s)|^2\right]\right)^{\frac{1}{2}}+\left(\mathbb{E}\left[\sup_{s\geq t}|\gamma^j_1(s,x^{t,a}_s)|^2\right]\right)^{\frac{1}{2}}\right\}\int^{\infty}_T\beta_{1,s}\mathrm{d}s\\
            &\quad\text{ }+\sqrt{m}\left\{\left(\mathbb{E}\int^{\infty}_t|\gamma^i_2(s,x^{t,a}_s)|^2\mathrm{d}s\right)^{\frac{1}{2}}+\left(\mathbb{E}\int^{\infty}_t|\gamma^j_2(s,x^{t,a}_s)|^2\mathrm{d}s\right)^{\frac{1}{2}}\right\}\left(\int^{\infty}_T\beta^2_{2,s}\mathrm{d}s\right)^{\frac{1}{2}}\\
            &\leq \tilde C_4 \left[\int^{\infty}_T\beta_{1,s}\mathrm{d}s+\left(\int^{\infty}_T\beta^2_{2,s}\mathrm{d}s\right)^{\frac{1}{2}}\right] \xrightarrow{T\to \infty}0.
        \end{aligned}
    \end{equation}
    Combining \eqref{eq23}--\eqref{eq27}, for any $(t,a)\in [0,\infty)\times \mathbb{R}^n$,
    \begin{equation*}
        \lim_{i,j\to \infty} |\gamma^i_1(t,a)-\gamma^j_1(t,a)| \leq \lim_{T \to \infty}\lim_{k \to \infty}\lim_{\delta \to 0}\lim_{i,j\to \infty}\big((I)+(II)+(III)+(IV)\big) = 0.
    \end{equation*}
    Thus, there exists a deterministic function $\gamma_1:[0,\infty)\times \mathbb{R}^n\to \mathbb{R}^m$ such that for any $(t,a)$, $\gamma_1(t,a) = \lim_{i\to \infty}\gamma^i_1(t,a)$. We define, for any $(\omega,s)\in \Omega \times [0,\infty)$,
    \begin{equation*}
        y_{s}(\omega):=\gamma_1(s,x^{0,\tilde a}_{s}(\omega)).
    \end{equation*}
    Then,
    \begin{equation}
        \label{eq39_1}
        \lim_{i\to \infty} y^i_s = \lim_{i\to \infty} \gamma^i_1(s,x^{0,\tilde a}_{s}) = \gamma_1(s,x^{0,\tilde a}_{s}) = y_s, \quad \forall s\geq 0,\quad \mathbb{P}\text{-a.s.}
    \end{equation}
    Therefore, combining Fatou's lemma with \eqref{eq20}, we obtain
    \begin{equation}
        \label{eq31_1}
        \mathbb{E} \left[\sup_{s\geq 0} |y_s|^2\right] \leq \mathbb{E} \left[\liminf_{i\to \infty}\sup_{s\geq 0} |y^i_s|^2\right]\leq \liminf_{i\to \infty} \mathbb{E} \left[\sup_{s\geq 0} |y^i_s|^2\right]\leq \tilde C_1(1+|\tilde a|^{2p}).
    \end{equation}
    Furthermore, by \eqref{eq22_1}, for any $(t,a)\in [0,\infty)\times \mathbb{R}^n$,
    \begin{equation}
        \label{eq41_1}
        |\gamma_1(t,a)| = \lim_{i\to \infty}|\gamma^i_1(t,a)|\leq \tilde C_5(1+|a|^{p}), 
    \end{equation}
    where the constant $\tilde C_5>0$ is independent of $t$ and $a$. Therefore, from the dominated convergence theorem, together with \Cref{le01} and \eqref{eq22_1}, \eqref{eq39_1}, and \eqref{eq41_1}, it follows that for any $\kappa>0$ and $T>0$, 
    \begin{equation}
        \label{eq29}
        \lim_{i\to \infty} \mathbb{E}\int^{T}_0 |y^i_s-y_s|^\kappa\mathrm{d}s = 0.
    \end{equation}

    \textbf{Step 2.} We turn to the convergence of $\{z^i_{\cdot}\}_{i\geq 1}$. For any $i,j\geq 1$ and $\tilde T>0$, applying the It\^{o} formula to $|y^i_s-y^j_s|^2$ gives
    \begin{equation*}
        \begin{aligned}
            &\quad\text{ }\mathbb{E}[|y^i_0 - y^j_0|^2] + \mathbb{E}\int_0^{\tilde T} |z^i_s - z^j_s|^2 \mathrm{d}s\\
            &=\mathbb{E}[|y^i_{\tilde T} - y^j_{\tilde T}|^2] + 2\mathbb{E}\int_0^{\tilde T} \left\langle y^i_s - y^j_s ,f^i(s,x^{0,\tilde a}_s,y^i_s,z^i_s) - f^j(s,x^{0,\tilde a}_s,y^j_s,z^j_s) \right\rangle\mathrm{d}s.
        \end{aligned}
    \end{equation*}
    Letting $\tilde T \to \infty$, for any $T>0$, we obtain
    \begin{equation}
        \label{eq31}
        \begin{aligned}
            &\mathbb{E}\int_0^{\infty} |z^i_s - z^j_s|^2 \mathrm{d}s\leq 2\mathbb{E}\int_0^{\infty} \big|y^i_s - y^j_s\big|\big|f^i(s,x^{0,\tilde a}_s,y^i_s,z^i_s) - f^j(s,x^{0,\tilde a}_s,y^j_s,z^j_s)\big| \mathrm{d}s\\
            &\qquad\qquad\qquad\qquad\leq 2\mathbb{E}\int_0^{T} \big|y^i_s - y^j_s\big|\big|f^i(s,x^{0,\tilde a}_s,y^i_s,z^i_s) - f^j(s,x^{0,\tilde a}_s,y^j_s,z^j_s)\big| \mathrm{d}s\\
            &\qquad\qquad\qquad\qquad\quad+ 2\mathbb{E}\int_T^{\infty} \big|y^i_s - y^j_s\big|\big|f^i(s,x^{0,\tilde a}_s,y^i_s,z^i_s) - f^j(s,x^{0,\tilde a}_s,y^j_s,z^j_s)\big| \mathrm{d}s\\
            &\qquad\qquad\qquad\qquad= (V)+(VI).
        \end{aligned}
    \end{equation}
    In the following, we calculate $(V)$ and $(VI)$ in \eqref{eq31} respectively. First, for $(V)$, according to Assumption 3-(iii), \Cref{le01}, \Cref{le21}-(i), \eqref{eq20}, and \eqref{eq29}, one can deduce
    \begin{equation}
        \label{eq32}
        \begin{aligned}
            &(V) \leq 2\left(\mathbb{E}\int_0^{T}\big|y^i_s - y^j_s\big|^{\frac{q}{q-1}}\mathrm{d}s\right)^{\frac{q-1}{q}}\left(\mathbb{E}\int_0^{T}\big|f^i(s,x^{0,\tilde a}_s,y^i_s,z^i_s) - f^j(s,x^{0,\tilde a}_s,y^j_s,z^j_s)\big|^q\mathrm{d}s\right)^{\frac{1}{q}}\\
            &\quad\text{ }\text{ }\leq 2\sqrt{m}\left(\mathbb{E}\int_0^{T}\big|y^i_s - y^j_s\big|^{\frac{q}{q-1}}\mathrm{d}s\right)^{\frac{q-1}{q}}\\
            &\qquad\quad\times\Bigg\{2\left(\int^T_0\beta^q_{1,s}\mathrm{d}s\right)^{\frac{1}{q}}\left[4+4\sup_{s\geq 0}\mathbb{E}|x^{0,\tilde a}_s|^{2p}+\mathbb{E}\left[\sup_{s\geq 0}|y^i_s|^2\right]+\mathbb{E}\left[\sup_{s\geq 0}|y^j_s|^2\right]\right]^{\frac{1}{2}}\\
            &\qquad\qquad\quad+\left(\int^{T}_0\beta^{\frac{2q}{2-q}}_{2,s}\mathrm{d}s\right)^{\frac{2-q}{2q}}\left[\left(\mathbb{E}\int^\infty_0|z^i_s|^2\mathrm{d}s\right)^{\frac{1}{2}}+\left(\mathbb{E}\int^\infty_0|z^j_s|^2\mathrm{d}s\right)^{\frac{1}{2}}\right]\Bigg\}\\
            &\quad\text{ }\text{ } \leq \bar C_{3,T}\left(\mathbb{E}\int_0^{T}\big|y^i_s - y^j_s\big|^{\frac{q}{q-1}}\mathrm{d}s\right)^{\frac{q-1}{q}}\xrightarrow{i,j\to \infty}0,
        \end{aligned}
    \end{equation}
    where $\bar C_{3,T}>0$ depends on $T$. Then, for $(VI)$, by \Cref{le01}, \eqref{eq20}, and Assumption 3-(iii), it holds that
    \begin{equation}
        \label{eq33}
        \begin{aligned}
            &(VI)\leq 2\mathbb{E}\int_T^{\infty} \big(|y^i_s|+|y^j_s|\big)\big(|f^i(s,x^{0,\tilde a}_s,y^i_s,z^i_s)|+|f^j(s,x^{0,\tilde a}_s,y^j_s,z^j_s)|\big) \mathrm{d}s\\
            &\qquad\leq 2\sqrt{m}\mathbb{E}\int_T^{\infty} \big(|y^i_s|+|y^j_s|\big)\big[\beta_{1,s}(2+2|x^{0,\tilde a}_s|^p+|y^i_s|+|y^j_s|) + \beta_{2,s}(|z^i_s|+|z^j_s|)\big]\mathrm{d}s\\
            &\qquad\leq 4\sqrt{m}\left(\mathbb{E}\left[\sup_{s\geq 0}|y^i_s|^2\right]+\mathbb{E}\left[\sup_{s\geq 0}|y^j_s|^2\right]\right)^{\frac{1}{2}}\\
            &\qquad\quad\times\left\{\left[\sqrt{2}+\sqrt{2}\left(\sup_{s\geq 0}\mathbb{E}|x^{0,\tilde a}_s|^{2p}\right)^{\frac{1}{2}}+\left(\mathbb{E}\left[\sup_{s\geq 0}|y^i_s|^2\right]+\mathbb{E}\left[\sup_{s\geq 0}|y^j_s|^2\right]\right)^{\frac{1}{2}}\right]\int_T^{\infty}\beta_{1,s}\mathrm{d}s\right.\\
            &\qquad\qquad\quad+\left.\left(\mathbb{E}\int^{\infty}_0|z^i_s|^2\mathrm{d}s+\mathbb{E}\int^{\infty}_0|z^j_s|^2\mathrm{d}s\right)^{\frac{1}{2}}\left(\int^{\infty}_T\beta^2_{2,s}\mathrm{d}s\right)^{\frac{1}{2}}\right\}\\
            &\qquad\leq \tilde C_6 \left[\int^{\infty}_T\beta_{1,s}\mathrm{d}s+\left(\int^{\infty}_T\beta^2_{2,s}\mathrm{d}s\right)^{\frac{1}{2}}\right] \xrightarrow{T\to \infty}0,
        \end{aligned}
    \end{equation}
    where the constant $\tilde C_6>0$. Combining \eqref{eq31}--\eqref{eq33}, we have
    \begin{equation*}
        \lim_{i,j\to \infty}\mathbb{E}\int_0^{\infty} |z^i_s - z^j_s|^2 \mathrm{d}r \leq \lim_{T\to \infty}\lim_{i,j\to \infty} \big((V)+(VI)\big) = 0.
    \end{equation*}
    Thus, there exists a stochastic process $z_{\cdot} \in L^2_{\mathcal{F}}(0,\infty;\mathbb{R}^{m\times n})$ such that
    \begin{equation}
        \label{eq34_1}
        \lim_{i\to \infty}\mathbb{E}\int_0^{\infty} |z^i_s - z_s|^2 \mathrm{d}s = 0.
    \end{equation}
    Furthermore, applying the Burkholder--Davis--Gundy inequality, we can obtain
    \begin{equation}
        \label{eq34}
        \begin{aligned}
            &\quad \lim_{i\to \infty}\mathbb{E}\left[\sup_{s\geq 0}\left|\int_s^{\infty}z^i_r\mathrm{d}W_r-\int_s^{\infty}z_r\mathrm{d}W_r\right|\right]\\
            &\leq \tilde C_7 \lim_{i\to \infty}  \left(\mathbb{E}\int_0^{\infty} |z^i_s - z_s|^2 \mathrm{d}s\right)^{\frac{1}{2}} = 0,
        \end{aligned}
    \end{equation}
    where the constant $\tilde C_7>0$.

    Moreover, it follows from \eqref{eq34_1} that there exists a subsequence $\big\{z^{{i_j}_v}_{\cdot}\big\}_{v\geq 1}$ of $\{z^i_{\cdot}\}_{i\geq 1}$ such that
    \begin{equation}
        \label{eq48_1}
        \lim_{v\to \infty} z^{{i_j}_v}_s = z_s,\quad \mathrm{d}\mathbb{P} \otimes \mathrm{d}t\text{-a.e. on } \Omega \times [0,\infty),
    \end{equation}
    and 
    \begin{equation}
        \label{eq49_1}
        \mathbb{E}\int_0^{\infty} \sup_{v\geq 0}\left|z^{{i_j}_v}_s\right|^2 \mathrm{d}s < \infty.
    \end{equation}
    To simplify the notation, we continue to use the symbol $i$ instead of ${i_j}_v$ hereafter. Let $\Lambda(t,a) := \limsup_{i\to \infty} \gamma_2^i(t,a)$ and $E:=\bigcap_{j=1}^{m}\bigcap_{k=1}^{n}\{(t,a)\in [0,\infty)\times \mathbb{R}^n|-\infty<\Lambda_{jk}(t,a)<+\infty\}$, where $\Lambda_{jk}(t,a)$ denotes the entry of the matrix $\Lambda(t,a)$ in row $j$ and column $k$. We then define
    \begin{equation*}
        \gamma_2(t,a):=
        \begin{cases}
            \Lambda(t,a), & (t,a)\in E,\\[2mm]
            0_{m\times n},& (t,a)\notin E,
        \end{cases}
    \end{equation*}
    where $0_{m\times n}$ denotes the $m\times n$ zero matrix. Therefore, combining \Cref{th04} with \eqref{eq48_1}, we obtain
    \begin{equation*}
        z_s = \lim_{i\to \infty} z^i_s = \limsup_{i\to \infty} \gamma_2^i(s, x^{t,a}_s) = \Lambda(s, x^{t,a}_s) = \gamma_2(s, x^{t,a}_s), \quad \mathrm{d}\mathbb{P} \otimes \mathrm{d}t\text{-a.e. on } \Omega \times [t,\infty).
    \end{equation*}

    \textbf{Step 3.} We verify the convergence of $\{f^i(\cdot,x^{0,\tilde a}_{\cdot},y^i_{\cdot},z^i_{\cdot})\}_{i\geq 1}$. For any $T>0$ and $k\geq 1$, we observe
    \begin{equation}
        \label{eq35}
        \begin{aligned}
            &\quad\text{ }\mathbb{E}\int^{\infty}_0\big|f^i(s,x^{0,\tilde a}_s,y^i_s,z^i_s)-f(s,x^{0,\tilde a}_s,y_s,z_s)\big|\mathrm{d}s\\
            &\leq  \mathbb{E}\int^T_0\big|f^i(s,x^{0,\tilde a}_s,y^i_s,z^i_s)-f(s,x^{0,\tilde a}_s,y^i_s,z^i_s)\big|\mathbbm{1}_{\{|y^i_s|+|z^i_s|\leq k\}}\mathrm{d}s\\
            &\quad\text{ }+\mathbb{E}\int^T_0\big|f^i(s,x^{0,\tilde a}_s,y^i_s,z^i_s)-f(s,x^{0,\tilde a}_s,y^i_s,z^i_s)\big|\mathbbm{1}_{\{|y^i_s|+|z^i_s|> k\}}\mathrm{d}s\\
            &\quad\text{ }+\mathbb{E}\int^T_0\big|f(s,x^{0,\tilde a}_s,y^i_s,z^i_s)-f(s,x^{0,\tilde a}_s,y_s,z_s)\big|\mathrm{d}s\\
            &\quad\text{ }+\mathbb{E}\int^{\infty}_T\big|f^i(s,x^{0,\tilde a}_s,y^i_s,z^i_s)-f(s,x^{0,\tilde a}_s,y_s,z_s)\big|\mathrm{d}s\\
            & = (VII) + (VIII) + (IX) + (X).
        \end{aligned}
    \end{equation}
    We now estimate $(VII)$--$(X)$ in turn. Regarding $(VII)$, define $R^i_k(s) := \sup_{|y|+|z|\leq k} |f^i(s, x^{0,\tilde a}_s,$ $ y, z) - f(s, x^{0,\tilde a}_s, y, z)|$. In view of Assumption 3-(ii) and \Cref{le21}, it follows that for any $s\in [0,T]$,
    \begin{equation*}
        \lim_{i\to \infty} R^i_k(s) = 0,\quad R^i_k(s)\leq (1+\sqrt{m})\beta_{1,s}(1+|x^{0,\tilde a}_s|^p+k) + (1+\sqrt{m})\beta_{2,s}k.
    \end{equation*}
    Coupled with \Cref{le01} and the dominated convergence theorem, we obtain
    \begin{equation}
        \label{eq36}
        \begin{aligned}
            &(VII) = \mathbb{E}\int^T_0\big|f^i(s,x^{0,\tilde a}_s,y^i_s,z^i_s)-f(s,x^{0,\tilde a}_s,y^i_s,z^i_s)\big|\mathbbm{1}_{\{|y^i_s|+|z^i_s|\leq k\}}\mathrm{d}s\\
            &\qquad\text{ }\text{ }\leq \mathbb{E}\int^T_0R^i_k(s)\mathrm{d}s \xrightarrow{i\to \infty}0.
        \end{aligned}
    \end{equation}
    Then, for $(VIII)$, according to Markov's inequality and an argument similar to that used in deriving \eqref{eq32}, we derive
    \begin{equation}
        \label{eq37}
        \begin{aligned}
            &(VIII) \leq \left(\int^T_0\mathbb{P}\big(|y^i_s|+|z^i_s| > k\big)\mathrm{d}s\right)^{\frac{q-1}{q}}\left(\mathbb{E}\int^T_0\big|f^i(s,x^{0,\tilde a}_s,y^i_s,z^i_s)-f(s,x^{0,\tilde a}_s,y^i_s,z^i_s)\big|^q\mathrm{d}s\right)^{\frac{1}{q}}\\
            &\qquad\quad\leq \left(\int^T_0\frac{\mathbb{E}\big[(|y^i_s|+|z^i_s|)^2\big]}{k^2}\mathrm{d}s\right)^{\frac{q-1}{q}}\left(\mathbb{E}\int^T_0\big|f^i(s,x^{0,\tilde a}_s,y^i_s,z^i_s)-f(s,x^{0,\tilde a}_s,y^i_s,z^i_s)\big|^q\mathrm{d}s\right)^{\frac{1}{q}}\\
            &\qquad\quad\leq  \bar C_{4,T}k^{-\frac{2q-2}{q}},
        \end{aligned}
    \end{equation}
    where $\bar C_{4,T}>0$ depends on $T$. Proceeding to $(IX)$, we define $G(s):=\beta_{1,s}(2+2|x^{0,\tilde a}_s|^p+\sup_{i\geq 0}|y^i_s|+|y_s|) + \beta_{2,s}(\sup_{i\geq 0}|z^i_s|+|z_s|)$. By Assumption 3, \Cref{le01}, \eqref{eq22_2}, \eqref{eq39_1}, \eqref{eq48_1} and \eqref{eq49_1}, it holds that
    \begin{equation*}
        \lim_{i\to \infty} f(s,x^{0,\tilde a}_s,y^i_s,z^i_s)=f(s,x^{0,\tilde a}_s,y_s,z_s), \quad|f(s,x^{0,\tilde a}_s,y^i_s,z^i_s)| \leq G(s),\quad \mathbb{E}\int^T_0G(s)\mathrm{d}s < \infty.
    \end{equation*}
    Therefore, applying the dominated convergence theorem, yields
    \begin{equation}
        \label{eq41}
        (IX) = \mathbb{E}\int^T_0\big|f(s,x^{0,\tilde a}_s,y^i_s,z^i_s)-f(s,x^{0,\tilde a}_s,y_s,z_s)\big|\mathrm{d}s \xrightarrow{i\to \infty}0.
    \end{equation}
    Finally, for $(X)$, from \Cref{le01}, \Cref{le21}-(i), Assumption 3, \eqref{eq20}, and \eqref{eq31_1}, it can be obtained that
    \begin{equation}
        \label{eq43}
        \begin{aligned}
            &(X) \leq \mathbb{E}\int^{\infty}_T\left[\beta_{1,s}\left((1+\sqrt{m})(1+|x^{0,\tilde a}_s|^p)+\sqrt{m}|y^i_s|+|y_s|\right) + \beta_{2,s}\left(\sqrt{m}|z^i_s|+|z_s|\right)\right]\mathrm{d}s\\
            &\leq \left((1\!+\!\sqrt{m})\left(1\!+\!\left(\sup_{s\geq 0}\mathbb{E}|x^{0,\tilde a}_s|^{2p}\right)^{\frac{1}{2}}\right)\!+\!\sqrt{m}\left(\mathbb{E}\left[\sup_{s\geq 0}|y^i_s|^2\right]\right)^{\frac{1}{2}}\!+\!\left(\mathbb{E}\left[\sup_{s\geq 0}|y_s|^2\right]\right)^{\frac{1}{2}}\right)\int_T^{\infty}\beta_{1,s}\mathrm{d}s\\
            &\quad+\left(\sqrt{m}\left(\mathbb{E}\int^{\infty}_0|z^i_s|^2\mathrm{d}s\right)^{\frac{1}{2}}+\left(\mathbb{E}\int^{\infty}_0|z_s|^2\mathrm{d}s\right)^{\frac{1}{2}}\right)\left(\int^{\infty}_T\beta^2_{2,s}\mathrm{d}s\right)^{\frac{1}{2}}\\
            &\leq \tilde C_8 \left[\int^{\infty}_T\beta_{1,s}\mathrm{d}s+\left(\int^{\infty}_T\beta^2_{2,s}\mathrm{d}s\right)^{\frac{1}{2}}\right] \xrightarrow{T\to \infty}0,
        \end{aligned}
    \end{equation}
    where the constant $\tilde C_8>0$. Summarizing \eqref{eq35}-\eqref{eq37}, \eqref{eq41}, and \eqref{eq43}, we conclude
    \begin{equation}
        \label{eq44}
        \begin{aligned}
            &\quad \text{ }\lim_{i\to \infty}\mathbb{E}\int^{\infty}_0\big|f^i(s,x^{0,\tilde a}_s,y^i_s,z^i_s)-f(s,x^{0,\tilde a}_s,y_s,z_s)\big|\mathrm{d}s\\
            &\leq \lim_{T\to \infty}\lim_{k\to \infty}\lim_{i\to \infty}\big((VII) + (VIII) + (IX) + (X)\big)= 0.
        \end{aligned}
    \end{equation}
     
    Finally, it follows from \eqref{eq34} and \eqref{eq44} that there exists an increasing subsequence $\{\tilde i_j\}_{j\geq 1}$ of the index sequence $\{i\}_{i\geq 1}$ such that
    \begin{equation}
        \label{eq57}
        \lim_{j\to \infty}\left|\int_s^{\infty}z^{\tilde i_j}_r\mathrm{d}W_r-\int_s^{\infty}z_r\mathrm{d}W_r\right|\leq \lim_{j\to \infty}\sup_{s\geq 0}\left|\int_s^{\infty}z^{\tilde i_j}_r\mathrm{d}W_r-\int_s^{\infty}z_r\mathrm{d}W_r\right| = 0, \quad \forall s\geq 0,\quad \mathbb{P}\text{-a.s.},
    \end{equation}
    and
    \begin{equation}
        \label{eq58}
        \begin{aligned}
            &\quad \text{ }\lim_{j\to \infty}\left|\int^{\infty}_sf^{\tilde i_j}(r,x^{0,\tilde a}_r,y^{\tilde i_j}_r,z^{\tilde i_j}_r)\mathrm{d}r-\int^{\infty}_sf(r,x^{0,\tilde a}_r,y_r,z_r)\mathrm{d}r\right|\\
            &\leq \lim_{j\to \infty}\int^{\infty}_0\left|f^{\tilde i_j}(r,x^{0,\tilde a}_r,y^{\tilde i_j}_r,z^{\tilde i_j}_r)-f(r,x^{0,\tilde a}_r,y_r,z_r)\right|\mathrm{d}r = 0, \quad \forall s\geq 0,\quad \mathbb{P}\text{-a.s.}
        \end{aligned}
    \end{equation}
    Combining \eqref{eq19}, \eqref{eq39_1}, \eqref{eq57} and \eqref{eq58}, we have
    \begin{equation*}
        \begin{aligned}
            &\quad \text{ }\Big|y^{\tilde i_j}_s-y_s\Big| = \lim_{v\to \infty} \Big|y^{\tilde i_j}_s-y^{\tilde i_v}_s\Big|\\
            &\leq \lim_{v\to \infty} \left\{\left|\int^{\infty}_s f^{\tilde i_j}(r,x^{0,\tilde a}_r,y^{\tilde i_j}_r,z^{\tilde i_j}_r)\mathrm{d}r-\int^{\infty}_s f^{\tilde i_v}(r,x^{0,\tilde a}_r,y^{\tilde i_v}_r,z^{\tilde i_v}_r)\mathrm{d}r\right| \!+\! \left|\int^{\infty}_sz^{\tilde i_j}_r\mathrm{d}W_r-\int^{\infty}_sz^{\tilde i_v}_r\mathrm{d}W_r\right|\right\}\\
            & = \left|\int^{\infty}_s f^{\tilde i_j}(r,x^{0,\tilde a}_r,y^{\tilde i_j}_r,z^{\tilde i_j}_r)\mathrm{d}r-\int^{\infty}_s f(r,x^{0,\tilde a}_r,y_r,z_r)\mathrm{d}r\right| + \left|\int^{\infty}_sz^{\tilde i_j}_r\mathrm{d}W_r-\int^{\infty}_sz_r\mathrm{d}W_r\right|\\
            &\leq \int^{\infty}_0\left|f^{\tilde i_j}(r,x^{0,\tilde a}_r,y^{\tilde i_j}_r,z^{\tilde i_j}_r)-f(r,x^{0,\tilde a}_r,y_r,z_r)\right|\mathrm{d}r \!+\! \sup_{s\geq 0}\left|\int_s^{\infty}z^{\tilde i_j}_r\mathrm{d}W_r-\int_s^{\infty}z_r\mathrm{d}W_r\right|, \text{ } \forall s\geq 0,\text{ } \mathbb{P}\text{-a.s.}
        \end{aligned} 
    \end{equation*}
    This implies that $\sup_{s\geq 0}|y^{\tilde i_j}_s-y_s|\to0$ as $j\to\infty$, $\mathbb{P}$-a.s. Therefore,
    \begin{equation*}
        y_s = \int^{\infty}_s f(r,x^{0,\tilde a}_r,y_r,z_r)\mathrm{d}r-\int^{\infty}_sz_r\mathrm{d}W_r, \quad \forall s\geq 0,\quad \mathbb{P}\text{-a.s.}
    \end{equation*}
    This completes the proof.
\end{proof}

\begin{remark}
    \label{re55}
    (i) To state \Cref{th24}, the main requirements are the domination condition of \Cref{de21} and the estimate \eqref{estimxa} of \Cref{le01}, therefore our framework has an extension beyond $x^{t,a}_.$, solution of the SDE \eqref{eq01}, to general Markov processes $x^{t,a}_.$ which satisfy the domination condition and \eqref{estimxa}.

    (ii) In fact, under Assumptions 1 and 3, for any $(t,a)\in [0,\infty)\times \mathbb{R}^n$,
    \begin{equation}
        \label{eq59}
        y_s = \int^{\infty}_s f(r,x^{t,a}_r,y_r,z_r)\mathrm{d}r-\int^{\infty}_sz_r\mathrm{d}W_r, \quad s\geq 0,
    \end{equation}
    admits a solution. It should be noted that the proof method of \Cref{th24} only yields the existence of a solution on the interval $[t,\infty)$. More precisely, there exists a solution $(y_\cdot,z_\cdot)\in S^2_{\mathcal{F}}(t,\infty;\mathbb{R}^m) \times L^2_{\mathcal{F}}(t,\infty;\mathbb{R}^{m\times n})$. Moreover, there exist deterministic measurable functions $\gamma_1$ and $\gamma_2$ such that $y_s = \gamma_1(s,x^{t,a}_s)$ and $z_s = \gamma_2(s,x^{t,a}_s)$. This is because the convergence argument for the sequence $\{y^i_{\cdot}\}_{i\geq 1}$ in Step 1 of the proof of \Cref{th24} relies on the Markovian representation established in \Cref{th04}. However, no corresponding representation is available on the interval $[0,t)$, and hence the existence of a solution on $[0,t)$ cannot be obtained directly by the same argument.

    Nevertheless, observe that when $s=t$, $y_t = \gamma_1(t,x^{t,a}_t) = \gamma_1(t,a)$ is a constant. For any $s\in [0,t)$, we then set $z_s\equiv 0$. Consequently, \eqref{eq59} reduces to the following ordinary differential equation:
    \begin{equation*}
        y_s =\gamma_1(t,a)+\int^{t}_s f(r,a,y_r,0)\mathrm{d}r, \quad s \in [0,t).
    \end{equation*}
    Applying the Carath\'{e}odory existence theorem directly (see Theorem 1 on p.~4 of \cite{F1988}), we obtain the existence of a solution on $[0,t)$.     
\end{remark}
\section{Nonzero-Sum Differential Games}
\label{se6}

This section studies a two-player nonzero-sum differential game, and the multiplayer case can be extended directly from this setting. Let the stochastic process $X_{\cdot}$ satisfy \eqref{eq01} for $t=0$, namely,
\begin{equation}
    \label{eq45}
    X_{s} = \tilde a+\int^s_0(AX_{r}+F(r,X_{r}))\mathrm{d}r+\int^s_0 \sigma(r,X_{r}) \mathrm{d}W_{r},\quad s \geq 0.
\end{equation}
where \eqref{eq45} satisfies Assumption 1. Each player $\pi_{i}$ ($i=1,2$) is then endowed with an individual control. We denote by $\mathcal{U}^1_{ad}$ (respectively, $\mathcal{U}^2_{ad}$) the set of progressively measurable processes defined on $[0,\infty)\times\Omega$ and taking values in $U_1$ (respectively, $U_2$), denoted by $u^1_{\cdot}$ (respectively, $u^2_{\cdot}$), where $U_1$ and $U_2$ are two compact metric spaces. Hereafter, $\mathcal{U}_{ad}:=\mathcal{U}^1_{ad}\times\mathcal{U}^2_{ad}$ is called the set of admissible controls for the two players, while $\mathcal{U}^1_{ad}$ (respectively, $\mathcal{U}^2_{ad}$) is the control set of $\pi_{1}$ (respectively, $\pi_{2}$).

Next, we define the random variable $\rho^{u^1,u^2}:=\exp\{\int^{\infty}_0(\sigma^{-1}(s,X_s)h(s,X_s,u^1_s,u^2_s))^{\top}\mathrm{d}W_s-\frac{1}{2}\int^{\infty}_0|\sigma^{-1}(s,X_s)h(s,X_s,u^1_s,u^2_s)|^2\mathrm{d}s\}$, where $h : [0,\infty)\times \mathbb{R}^n \times U_1 \times U_2 \to \mathbb{R}^n$ is measurable. In fact, there exists a locally bounded function $\beta_2:[0,\infty)\to (0,\infty)$ satisfying $\int_0^{\infty}\beta^2_{2,s}\mathrm{d}s<\infty$, and
\begin{equation*}
    |h(s,x,u^1,u^2)|\leq \beta_{2,s},\qquad \forall (s,x,u^1,u^2)\in [0,\infty)\times \mathbb{R}^n \times U_1 \times U_2.
\end{equation*}
Let the measure be $\mathrm{d}\mathbb{P}^{u^1,u^2} := \rho^{u^1,u^2}\mathrm{d}\mathbb{P}$. By virtue of Girsanov's theorem and Novikov's criterion (see \cite{RY1999}, Chapter VIII, Theorem 1.12 and Proposition 1.15), it is straightforward to verify that $\mathbb{P}^{u^1,u^2}$ is a probability measure on $(\Omega,\mathcal{F})$, and that $W^{u^1,u^2}_{\cdot} = W_{\cdot} - \int^{\cdot}_0\sigma^{-1}(s,X_s)h(s,X_s,u^1_s,u^2_s)\mathrm{d}s$ is a Brownian motion under $\mathbb{P}^{u^1,u^2}$. Therefore, we can rewrite \eqref{eq45} as follows:
\begin{equation*}
    X_{s} = \tilde a+\int^s_0\left(AX_{r}+F(r,X_{r})+h(s,X_s,u^1_s,u^2_s)\right)\mathrm{d}r+\int^s_0 \sigma(r,X_{r}) \mathrm{d}W^{u^1,u^2}_{r},\quad s \geq 0.
\end{equation*}

We further give, for the two players implementing the control strategy $(u^1_{\cdot},u^2_{\cdot})$, the cost functional $J_i(u^1_{\cdot},u^2_{\cdot})$ as follows:
\begin{equation}
    \label{eq46}
    J_i(u^1,u^2) =\mathbb{E}^{u^1,u^2}\left[\int_{0}^{\infty}l_i(s,X_s,u^1_s,u^2_s)\mathrm{d}s\right] ,\quad i=1,2,
\end{equation} 
where $\mathbb{E}^{u^1,u^2}$ denotes the expectation under the probability measure $\mathbb{P}^{u^1,u^2}$, and $l_i : [0,\infty)\times \mathbb{R}^n \times U_1 \times U_2 \to \mathbb{R}$. Moreover, there exist a constant $p\geq 0$ and a locally square-integrable function $\beta_1:[0,\infty)\to (0,\infty)$ satisfying $\int_0^{\infty}\beta_{1,s}\mathrm{d}s<\infty$, and
\begin{equation*}
    |l_i(s,x,u^1,u^2)|\leq \beta_{1,s}(1+|x|^p),\qquad \forall (s,x,u^1,u^2)\in [0,\infty)\times \mathbb{R}^n \times U_1 \times U_2.
\end{equation*}

\begin{remark}
    \label{re61_1}
    In fact, \eqref{eq46} includes the usual exponential discount case as a special case. More precisely, for a given constant $\tilde \alpha>0$, let
    \begin{equation*}
        l_i(s,x,u^1,u^2) = e^{-\tilde \alpha s} \tilde l_i(s,x,u^1,u^2),
    \end{equation*}
    where $\tilde l_i : [0,\infty)\times \mathbb{R}^n \times U_1 \times U_2 \to \mathbb{R}$ satisfies the corresponding polynomial growth condition with respect to $x$. In this case, the cost functional $J_i$ reduces to the usual exponential discount form.
\end{remark}

Next, we investigate a nonzero-sum stochastic differential game problem, which indicates that the two players are in a cooperative relationship where both aim to minimize their costs. Naturally, we need to focus on the Nash equilibrium point, that is, the control strategies $(\bar u^1_{\cdot},\bar u^2_{\cdot})\in\mathcal{U}_{ad}$ such that for all $(u^1_{\cdot},u^2_{\cdot})\in\mathcal{U}_{ad}$, it holds that:
\begin{equation*}
    \left\{
        \begin{aligned}
            &J_{1}(\bar u^1,\bar u^2)\leq J_{1}(u^1,\bar u^2),\\
            &J_{2}(\bar u^1,\bar u^2)\leq J_{2}(\bar u^1,u^2).
        \end{aligned}
    \right. 
\end{equation*}
This means that when the players implement the strategies $(\bar u^1_{\cdot},\bar u^2_{\cdot})$, if one of them unilaterally decides to deviate or change strategies while the other maintains their choice, the one making the change will be penalized.

To solve this nonzero-sum differential game problem, we define the Hamiltonian function $H_i:[0,\infty)\times
\mathbb{R}^n \times \mathbb{R}^{1\times n} \times U_1 \times U_2 \to \mathbb{R}$ as follows:
\begin{equation}
    \label{eq47}
    H_i(s,x,z,u^1,u^2) := z\sigma^{-1}(s,x)h(s,x,u^1,u^2) + l_i(s,x,u^1,u^2),\quad i=1,2.
\end{equation}
Next, we introduce the following assumption.

\noindent \textbf{Assumption 4.} (i) There exist
Borel mappings $\bar u^i : [0,\infty)\times \mathbb{R}^n \times
\mathbb{R}^{1\times n}\times \mathbb{R}^{1\times n} \to U_i, i=1,2$, such that for any $(s,x,z^1,z^2,u^1,u^2)\in [0,\infty)\times \mathbb{R}^n \times
\mathbb{R}^{1\times n}\times \mathbb{R}^{1\times n} \times U_1\times U_2$, we have:
\begin{equation*}
    \left\{
        \begin{aligned}
            & H_1(s,x,z^1,\bar u^1(s,x,z^1,z^2),\bar u^2(s,x,z^1,z^2)) \leq H_1(s,x,z^1,u^1,\bar u^2(s,x,z^1,z^2)),\\
            & H_2(s,x,z^2,\bar u^1(s,x,z^1,z^2),\bar u^2(s,x,z^1,z^2)) \leq H_2(s,x,z^2,\bar u^1(s,x,z^1,z^2),u^2).
        \end{aligned}
    \right.
\end{equation*}

\noindent (ii) For any $(s,x) \in [0,\infty)\times \mathbb{R}^n$, the functions $H_1(s,x,z^1,\bar u^1(s,x,z^1,z^2),\bar u^2(s,x,z^1,z^2))$ and $H_2(s,x,z^2,$ $\bar u^1(s,x,z^1,z^2),\bar u^2(s,x,z^1,z^2))$ are continuous with respect to $(z^1,z^2)$.

\begin{remark}
    \label{re31}
    Assumption 4-(i) is referred to as the \textbf{Generalized Isaacs's condition}, which originates from the Isaacs's condition for stochastic differential games in \cite{ED1999,HL1995}, and the method of taking the infimum of the Hamiltonian function in optimal control in \cite{D1973}.
\end{remark}

We provide an example satisfying Assumption 4.
\begin{example}
    \label{ex62}
    We consider the case $n=1$. Define
    \begin{equation*}
        \left\{
        \begin{aligned}
            & U_1 = U_2 = [0,4],\quad \sigma(s,x)=1,\quad h(s,x,u^1,u^2) = e^{-2s}\frac{1+x^2}{2+x^2}(u^1+u^2),\\
            &l_1(s,x,u^1,u^2) = e^{-s}\left[(1+x^2)(u^1-1)^2+(u^2)^2\right]\\
            &l_2(s,x,u^1,u^2) = e^{-s}\left[(1+x^2)(u^2-2)^2+(u^1)^2\right].
        \end{aligned}
        \right.
    \end{equation*}
    Then, for any $(s,x,u^1,u^2)\in [0,\infty)\times \mathbb{R} \times [0,4] \times [0,4]$, we have
    \begin{equation*}
        |h(s,x,u^1,u^2)|\leq 8e^{-2s},\quad l_i(s,x,u^1,u^2)\leq 25e^{-s}(1+|x|^2),\quad i=1,2.
    \end{equation*}
    Therefore, according to \eqref{eq47}, we obtain
    \begin{equation}
        \label{eq63}
        \left\{
            \begin{aligned}
                &H_1(s,x,z^1,u^1,u^2)=e^{-2s}\frac{1+x^2}{2+x^2}z^1(u^1+u^2)+e^{-s}\left[(1+x^2)(u^1-1)^2+(u^2)^2\right],\\
                &H_2(s,x,z^2,u^1,u^2)= e^{-2s}\frac{1+x^2}{2+x^2}z^2(u^1+u^2)+e^{-s}\left[(1+x^2)(u^2-2)^2+(u^1)^2\right].
            \end{aligned}
        \right.
    \end{equation}
    Since $\frac{\partial^2 H_i}{\partial(u^i)^2} = 2e^{-s}(1+x^2)>0$, $H_i$ is strictly convex with respect to $u^i$. Hence, solving $\frac{\partial H_i}{\partial u^i} = 0 $ yields
    \begin{equation}
        \label{eq64}
        \bar{u}^1(s,x,z^1,z^2)=\Pi_{[0,4]}\left(1-\frac{e^{-s}z^1}{2(2+x^2)}\right),\quad \bar{u}^2(s,x,z^1,z^2)=\Pi_{[0,4]}\left(2-\frac{e^{-s}z^2}{2(2+x^2)}\right),
    \end{equation}
    where the projection function is given by $\Pi_{[0,4]}(s) = \min\{4,\max\{0,s\}\}$. It is straightforward to verify that $\bar{u}^1$ and $\bar{u}^2$ satisfy Assumption 4. However, it should be noted that, after substituting \eqref{eq64} into \eqref{eq63}, the resulting $H_1$ and $H_2$ are not Lipschitz continuous with respect to $z_1$ and $z_2$. Hence, Assumption 2-(ii) is not satisfied, whereas Assumption 3 holds.
\end{example}

Next, we provide a useful lemma.
\begin{lemma}
    \label{le32}
    Suppose that Assumption 1 holds, for any $(u^1_{\cdot},u^2_{\cdot})\in\mathcal{U}_{ad},i=1,2$,
    \begin{equation}
        \label{eq48}
        Y^{i,(u^1,u^2)}_s = \int^{\infty}_s H_i(r,X_r,Z^{i,(u^1,u^2)}_r,u^1_r,u^2_r)\mathrm{d}r-\int^{\infty}_sZ^{i,(u^1,u^2)}_r\mathrm{d}W_r, \quad s\geq 0, 
    \end{equation}
    there exists a unique solution $(Y^{i,(u^1,u^2)}_{\cdot},Z^{i,(u^1,u^2)}_{\cdot})\in S^2_{\mathcal{F}}(0,\infty;\mathbb{R}) \times L^2_{\mathcal{F}}(0,\infty;\mathbb{R}^{1\times n})$. Moreover, it follows that $J_i(u^1,u^2)=Y^{i,(u^1,u^2)}_0$.
\end{lemma}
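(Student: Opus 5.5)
The plan has two parts: well-posedness of \eqref{eq48}, and then the identification $J_i(u^1,u^2)=Y^{i,(u^1,u^2)}_0$ via a Girsanov change of measure.

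\textbf{Well-posedness.} Fix $(u^1_\cdot,u^2_\cdot)\in\mathcal{U}_{ad}$ and $i$. Equation \eqref{eq48} is one-dimensional, and its generator $g_r(z):=H_i(r,X_r,z,u^1_r,u^2_r)$ is random through $X_r$ and the controls. I will check the hypotheses of Proposition 3.3 in \cite{FJ2012}, which the paper already uses for \eqref{eq02}. That result is stated for adapted random generators, so it applies here. Two facts are needed.
\begin{itemize}
\item[(a)] \emph{Integrability at $z=0$.} We have $g_r(0)=l_i(r,X_r,u^1_r,u^2_r)$ and $|g_r(0)|\le\beta_{1,r}(1+|X_r|^p)$. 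The same computation as in the proof of \Cref{le21}-(ii), combined with \Cref{le01}, gives $\mathbb{E}(\int_0^\infty|g_r(0)|\mathrm{d}r)^2<\infty$.
\item[(b)] \emph{Lipschitz bound in $z$.} We have $|g_r(z)-g_r(z')|\le|z-z'|\,|\sigma^{-1}(r,X_r)|\,|h|\le C\beta_{2,r}|z-z'|$, since $\sigma^{-1}$ is bounded by Assumption 1-(iii). Moreover $\int_0^\infty\beta_{2,r}^2\mathrm{d}r<\infty$.
\end{itemize}
Together these give a unique solution in $S^2_{\mathcal{F}}(0,\infty;\mathbb{R})\times L^2_{\mathcal{F}}(0,\infty;\mathbb{R}^{1\times n})$.

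\textbf{Identification of $J_i$.} Set $\psi_r:=\sigma^{-1}(r,X_r)h(r,X_r,u^1_r,u^2_r)$, so that $\mathrm{d}W_r=\mathrm{d}W^{u^1,u^2}_r+\psi_r\mathrm{d}r$. Substituting into \eqref{eq48}, the term $Z_r\psi_r$ cancels, and for every $T$ we get
\begin{equation*}
Y_0=Y_T+\int_0^T l_i(r,X_r,u^1_r,u^2_r)\mathrm{d}r-\int_0^T Z_r\,\mathrm{d}W^{u^1,u^2}_r .
\end{equation*}
Since $Y_0$ is deterministic, it remains to take $\mathbb{E}^{u^1,u^2}$ and let $T\to\infty$.
\begin{itemize}
\item[(c)] \emph{Moments of the density.} The integrand $\psi$ satisfies $\int_0^\infty|\psi_r|^2\mathrm{d}r\le C\int_0^\infty\beta_{2,r}^2\mathrm{d}r<\infty$ deterministically. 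Hence $\rho^{u^1,u^2}$ has moments of all orders. Writing $\rho^2=\mathcal{E}(2\psi)\,e^{\int_0^\infty|\psi|^2\mathrm{d}r}$ gives $\mathbb{E}[(\rho^{u^1,u^2})^2]\le e^{C\int_0^\infty\beta_2^2\mathrm{d}r}$.
\item[(d)] \emph{Vanishing of the stochastic integral.} By Cauchy--Schwarz, $\mathbb{E}^{u^1,u^2}[\sup_T|\int_0^TZ\mathrm{d}W^{u^1,u^2}|]\le(\mathbb{E}\rho^2)^{1/2}(\mathbb{E}\sup_T|\cdot|^2)^{1/2}$. The last factor is finite by BDG applied under $\mathbb{P}$, since $\int Z\mathrm{d}W^{u^1,u^2}=\int Z\mathrm{d}W-\int Z\psi\mathrm{d}r$ and $\mathbb{E}(\int|Z||\psi|\mathrm{d}r)^2\le C\,\mathbb{E}\int|Z|^2\mathrm{d}r$. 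So the stochastic integral is a uniformly integrable $\mathbb{P}^{u^1,u^2}$-martingale (it is a local martingale with integrable supremum), and its expectation is zero.
\item[(e)] \emph{Passage to the limit.} The same Cauchy--Schwarz argument with \Cref{le01} shows $\mathbb{E}^{u^1,u^2}\int_0^\infty|l_i|\mathrm{d}r<\infty$; dominated convergence then handles $\int_0^T l_i\,\mathrm{d}r$. Finally, $\mathbb{E}^{u^1,u^2}|Y_T|\le(\mathbb{E}\rho^2)^{1/2}(\mathbb{E}|Y_T|^2)^{1/2}\to0$ by \Cref{re41}. That remark applies here because the generator satisfies the same structure, $|g_r(z)|\le|g_r(0)|+C\beta_{2,r}|z|$.
\end{itemize}
Together, (d) and (e) yield $Y^{i,(u^1,u^2)}_0=J_i(u^1,u^2)$.

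The main obstacle is the infinite-horizon change of measure. Girsanov must hold on all of $[0,\infty)$, not just on finite intervals, and the $\mathbb{P}$-integrability of $Y$ and $Z$ must transfer to $\mathbb{P}^{u^1,u^2}$. Both rest on the uniform $L^2$ bound on $\rho^{u^1,u^2}$ in (c), which in turn comes from $\beta_2\in L^2(0,\infty)$.
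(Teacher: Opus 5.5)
Your proposal is correct and follows essentially the same route as the paper. Well-posedness comes from Proposition 3.3 of \cite{FJ2012} with $z$-Lipschitz coefficient $C\beta_{2,\cdot}$. The identification then uses the Girsanov rewriting $Y_s=\int_s^\infty l_i\,\mathrm{d}r-\int_s^\infty Z\,\mathrm{d}W^{u^1,u^2}_r$, the bound $\mathbb{E}[(\rho^{u^1,u^2})^2]\le e^{C\int_0^\infty\beta_{2,r}^2\mathrm{d}r}$ obtained from the exponential of $2\sigma^{-1}h$ and Novikov, and BDG plus Cauchy--Schwarz to make the stochastic integral a uniformly integrable $\mathbb{P}^{u^1,u^2}$-martingale. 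Your finite-$T$ passage, with the explicit checks $\mathbb{E}^{u^1,u^2}\int_0^\infty|l_i|\,\mathrm{d}r<\infty$ and $\mathbb{E}^{u^1,u^2}|Y_T|\to0$, just makes explicit what the paper uses implicitly when it writes $Y_s$ directly as a conditional expectation over $[s,\infty)$.
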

\begin{proof}
    Based on the definition of $H_i$ in \eqref{eq47} and Proposition 3.3 in \cite{FJ2012}, \eqref{eq48} admits a unique solution $(Y^{i,(u^1,u^2)}_{\cdot},Z^{i,(u^1,u^2)}_{\cdot})\in S^2_{\mathcal{F}}(0,\infty;\mathbb{R}) \times L^2_{\mathcal{F}}(0,\infty;\mathbb{R}^{1\times n})$. Then, we let
    \begin{equation*}
        \tilde \rho := \exp\left\{\int^{\infty}_02\big(\sigma^{-1}(s,X_s)h(s,X_s,u^1_s,u^2_s)\big)^{\top}\mathrm{d}W_s-\frac{1}{2}\int^{\infty}_0\big|2\sigma^{-1}(s,X_s)h(s,X_s,u^1_s,u^2_s)\big|^2\mathrm{d}s\right\}.
    \end{equation*}
    In light of Novikov's criterion (see Proposition 1.15 in Chapter VIII of \cite{RY1999}), we deduce that $\mathbb{E}\tilde \rho = 1$. Consequently, combining the Burkholder-Davis-Gundy inequality and the Cauchy-Schwarz inequality,
    \begin{equation}
        \label{eq49}
        \begin{aligned}
            &\mathbb{E}^{u^1,u^2}\left[\sup_{s\geq 0} \left|\int^{s}_0Z^{i,(u^1,u^2)}_r\mathrm{d}W^{u^1,u^2}_r\right| \right] \leq C \mathbb{E}^{u^1,u^2}\left[\left(\int^{\infty}_0\left|Z^{i,(u^1,u^2)}_r\right|^2\mathrm{d}r\right)^{\frac{1}{2}}\right]\\
            &\qquad\qquad\qquad\qquad\qquad\qquad\quad = C\mathbb{E}\left[\rho^{u^1,u^2}\left(\int^{\infty}_0\left|Z^{i,(u^1,u^2)}_r\right|^2\mathrm{d}r\right)^{\frac{1}{2}}\right]\\
            & \qquad\qquad\qquad\qquad\qquad\qquad\quad\text{ }\leq C\left(\mathbb{E}\left|\rho^{u^1,u^2}\right|^2\right)^{\frac{1}{2}}\left(\mathbb{E}\int^{\infty}_0\left|Z^{i,(u^1,u^2)}_r\right|^2\mathrm{d}r\right)^{\frac{1}{2}}\\
            & \qquad\qquad\qquad\qquad\qquad\qquad\quad= C \left(e^{\int^{\infty}_0|\sigma^{-1}(r,X_r)h(r,X_r,u^1_r,u^2_r)|^2\mathrm{d}r}\mathbb{E}\tilde \rho\right)^{\frac{1}{2}}\left(\mathbb{E}\int^{\infty}_0\left|Z^{i,(u^1,u^2)}_r\right|^2\mathrm{d}r\right)^{\frac{1}{2}}\\
            &\qquad\qquad\qquad\qquad\qquad\qquad\quad\leq C e^{\frac{\tilde C}{2}\int^{\infty}_0\beta^2_{2,r}\mathrm{d}r}\left(\mathbb{E}\int^{\infty}_0\left|Z^{i,(u^1,u^2)}_r\right|^2\mathrm{d}r\right)^{\frac{1}{2}}< \infty,
        \end{aligned}
    \end{equation}
    where constants $C,\tilde C>0$. This implies that $(\int^{s}_0Z^{i,(u^1,u^2)}_r\mathrm{d}W^{u^1,u^2}_r)_{s\geq 0}$ is a uniformly integrable martingale. Meanwhile, we can rewrite \eqref{eq48} as follows:
    \begin{equation*}
         Y^{i,(u^1,u^2)}_s = \int^{\infty}_s l_i(r,X_r,u^1_r,u^2_r)\mathrm{d}r-\int^{\infty}_sZ^{i,(u^1,u^2)}_r\mathrm{d}W^{u^1,u^2}_r, \quad s\geq 0. 
    \end{equation*}
    Therefore, we obtain
    \begin{equation*}
         Y^{i,(u^1,u^2)}_s = \mathbb{E}^{u^1,u^2}\left[\left.\int^{\infty}_s l_i(r,X_r,u^1_r,u^2_r)\mathrm{d}r\right|\mathcal{F}_s\right], \quad s\geq 0. 
    \end{equation*}
    Furthermore, when $s=0$,
    \begin{equation*}
        J_i(u^1,u^2) =\mathbb{E}^{u^1,u^2}\left[\int_{0}^{\infty}l_i(r,X_r,u^1_r,u^2_r)\mathrm{d}r\right] = Y^{i,(u^1,u^2)}_0.
    \end{equation*}
    This completes the proof.
\end{proof}

We now present the main result of this section.
\begin{theorem}
    \label{th33}
    Suppose that Assumptions 1 and 4 hold. Then, for any $i=1,2$,
    \begin{equation}
        \label{eq50}
        \bar Y^i_s = \int^{\infty}_s H_i\big(r,X_r,\bar Z^i_r,\bar u^1(r,X_r,\bar Z^1_r,\bar Z^2_r),\bar u^2(r,X_r,\bar Z^1_r,\bar Z^2_r)\big)\mathrm{d}r-\int^{\infty}_s\bar Z^i_r\mathrm{d}W_r, \quad s\geq 0, 
    \end{equation}
    there exists a solution $(\bar Y^i_{\cdot},\bar Z^i_{\cdot})\in S^2_{\mathcal{F}}(0,\infty;\mathbb{R}) \times L^2_{\mathcal{F}}(0,\infty;\mathbb{R}^{1\times n})$. Moreover, $(\bar u^1(\cdot,X_{\cdot},\bar Z^1_{\cdot},\bar Z^2_{\cdot}),\bar u^2(\cdot,$ $X_{\cdot},\bar Z^1_{\cdot},\bar Z^2_{\cdot}))$ is a Nash equilibrium of the nonzero-sum differential game.
\end{theorem}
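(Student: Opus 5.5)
The plan has two parts. First we show that \eqref{eq50} is an instance of \eqref{eq11} satisfying Assumption 3, so \Cref{th24} gives a solution. Then we run the classical comparison argument on \Cref{le32} to obtain the Nash property.

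\textbf{Existence.} Set $m=2$ and $y=(y^1,y^2)^\top$. Let $z\in\mathbb{R}^{2\times n}$ have rows $z^1,z^2$. Define
\[
f_i(s,x,y,z):=H_i\big(s,x,z^i,\bar u^1(s,x,z^1,z^2),\bar u^2(s,x,z^1,z^2)\big),\qquad i=1,2.
\]
This $f$ is Borel, since $\bar u^i$, $h$, $l_i$ and $\sigma^{-1}$ are. It is independent of $y$, and by Assumption 4-(ii) it is continuous in $(y,z)$, which gives Assumption 3-(i). From \eqref{eq47}, the bound $|h|\le\beta_{2,s}$, the growth condition on $l_i$ and the boundedness of $\sigma^{-1}$, we get
\[
|f(s,x,y,z)|\le 2\beta_{1,s}(1+|x|^p)+C\beta_{2,s}|z|.
\]
After renaming constants this is Assumption 3-(ii). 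For Assumption 3-(iii), $\int_0^\infty\beta_{1,s}\,\mathrm{d}s<\infty$ and $\int_0^\infty\beta_{2,s}^2\,\mathrm{d}s<\infty$ are given. Moreover $\beta_1$ is locally square integrable and $\beta_2$ is locally bounded. Taking any $q\in(1,2)$, Hölder's inequality gives $\int_0^T\beta_1^q<\infty$, and local boundedness gives $\int_0^T\beta_2^{2q/(2-q)}<\infty$. Hence \Cref{th24} applies with $x^{0,\tilde a}=X$ and yields $(\bar Y,\bar Z)\in S^2_{\mathcal F}\times L^2_{\mathcal F}$ solving \eqref{eq50}.

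\textbf{Nash property.} Put $\bar u^i_s:=\bar u^i(s,X_s,\bar Z^1_s,\bar Z^2_s)$. These processes are progressively measurable with values in $U_i$, so $(\bar u^1,\bar u^2)\in\mathcal U_{ad}$. By uniqueness in \Cref{le32}, $(\bar Y^i,\bar Z^i)=(Y^{i,(\bar u^1,\bar u^2)},Z^{i,(\bar u^1,\bar u^2)})$, and therefore $J_i(\bar u^1,\bar u^2)=\bar Y^i_0$.

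Now fix an arbitrary $u^1\in\mathcal U^1_{ad}$ and let $(Y,Z)$ be the solution of \eqref{eq48} for $i=1$ with controls $(u^1,\bar u^2)$. Write $\Delta Y=Y-\bar Y^1$ and $\Delta Z=Z-\bar Z^1$. We have
\[
H_1(r,X_r,Z_r,u^1_r,\bar u^2_r)-H_1(r,X_r,\bar Z^1_r,\bar u^1_r,\bar u^2_r)
=\Delta Z_r\,\sigma^{-1}h(r,X_r,u^1_r,\bar u^2_r)+\Big[H_1(r,X_r,\bar Z^1_r,u^1_r,\bar u^2_r)-H_1(r,X_r,\bar Z^1_r,\bar u^1_r,\bar u^2_r)\Big].
\]
By Assumption 4-(i), evaluated at $z^1=\bar Z^1_r$, $z^2=\bar Z^2_r$, the bracket is $\ge0$. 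Consequently
\[
\Delta Y_s=\int_s^\infty(\text{bracket})\,\mathrm{d}r-\int_s^\infty\Delta Z_r\,\mathrm{d}W^{u^1,\bar u^2}_r,
\]
where $W^{u^1,\bar u^2}$ is the Girsanov Brownian motion. As in \eqref{eq49}, $\int\Delta Z\,\mathrm{d}W^{u^1,\bar u^2}$ is a uniformly integrable martingale under $\mathbb P^{u^1,\bar u^2}$. This uses the square integrability of $\rho$, which holds because $\int_0^\infty\beta_2^2<\infty$. Also $\Delta Y_T\to0$ in the appropriate sense, by \Cref{re41}. Taking $\mathbb E^{u^1,\bar u^2}$ at $s=0$ therefore gives $\Delta Y_0\ge0$, that is, $J_1(u^1,\bar u^2)\ge J_1(\bar u^1,\bar u^2)$. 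The argument for player 2 is symmetric.

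\textbf{Main obstacle.} The delicate step is the last one: justifying the expectation under the changed measure on an infinite horizon. Concretely, we need the bracket to be $\mathbb P^{u^1,\bar u^2}$-integrable on $[0,\infty)$. This follows from Hölder's inequality with $\rho\in L^2(\mathbb P)$, together with $\mathbb E\big(\int_0^\infty\beta_1(1+|X|^p)\big)^2<\infty$ and $\bar Z^1\in L^2$. We also need to avoid any boundary term at infinity. For that we work directly with the representation $\Delta Y_0=\mathbb E^{u^1,\bar u^2}\int_0^\infty(\text{bracket})\,\mathrm{d}r$, which follows by letting $s=0$ in the identity above once both stochastic integrals are known to be uniformly integrable martingales under $\mathbb P^{u^1,\bar u^2}$.
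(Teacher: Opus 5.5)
Your proposal is correct and follows the paper's proof. Existence comes from checking Assumption 3 for the two-dimensional generator and applying \Cref{th24}. The Nash property comes from rewriting $\Delta Y$ as a BSDE driven by $W^{u^1,\bar u^2}$, whose driver is nonnegative by Assumption 4-(i), and then using the uniform-integrability argument of \eqref{eq49} together with \Cref{le32}.

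You also make explicit several points the paper leaves implicit:
\begin{itemize}
\item Assumption 3-(iii), via local square integrability of $\beta_1$ and local boundedness of $\beta_2$;
\item admissibility of the feedback controls;
\item the identification $J_i(\bar u^1,\bar u^2)=\bar Y^i_0$ through uniqueness in \Cref{le32}.
\end{itemize}
One small caveat: \Cref{re41} is stated under Assumption 2, which \eqref{eq50} does not satisfy. This costs nothing, since, as you note, your argument needs no boundary term at infinity.
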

\begin{proof}
    By the definition in \eqref{eq47} of $H_i$ and Assumption 4-(ii), it is straightforward to verify that \eqref{eq50} satisfies Assumption 3. Therefore, \Cref{th24} implies that \eqref{eq50} admits a solution $(\bar Y^i_{\cdot},\bar Z^i_{\cdot})\in S^2_{\mathcal{F}}(0,\infty;\mathbb{R}) \times L^2_{\mathcal{F}}(0,\infty;\mathbb{R}^{1\times n})$. Then, for any $u^1_{\cdot} \in \mathcal{U}^1_{ad}$, consider the following BSDE:
    \begin{equation}
        \label{eq51}
        Y^{u^1}_s = \int^{\infty}_s H_1\big(r,X_r,Z^{u^1}_r,u^1_r,\bar u^2(r,X_r,\bar Z^1_r,\bar Z^2_r)\big)\mathrm{d}r-\int^{\infty}_sZ^{u^1}_r\mathrm{d}W_r, \quad s\geq 0.
    \end{equation}
    As in \eqref{eq48}, equation \eqref{eq51} admits a unique solution $(Y^{u^1}_{\cdot},Z^{u^1}_{\cdot})$. Next, for any $s\geq 0$, we define
    \begin{equation*}
        \left\{
            \begin{aligned}
                & \Delta H_s := H_1\big(s,X_s,\bar Z^1_s,u^1_s,\bar u^2(s,X_s,\bar Z^1_s,\bar Z^2_s)\big) - H_1\big(s,X_s,\bar Z^1_s,\bar u^1(s,X_s,\bar Z^1_s,\bar Z^2_s),\bar u^2(s,X_s,\bar Z^1_s,\bar Z^2_s)\big),\\
                & \Delta Y_s := Y^{u^1}_s - \bar Y^i_s,\qquad \Delta Z_s := Z^{u^1}_s - \bar Z^i_s. 
            \end{aligned}
        \right. 
    \end{equation*}
    Combining \eqref{eq50} and \eqref{eq51} yields
    \begin{equation*}
        \begin{aligned}
            &\Delta Y_s = \int^{\infty}_s\big[\Delta H_r + H_1(r,X_r,Z^{u^1}_r,u^1_r,\bar u^2(r,X_r,\bar Z^1_r,\bar Z^2_r)) - H_1(r,X_r,\bar Z^1_r,u^1_r,\bar u^2(r,X_r,\bar Z^1_r,\bar Z^2_r)) \big]\mathrm{d}r\\
            &\qquad\qquad-\int^{\infty}_s\Delta Z_r\mathrm{d}W_r\\
            &\qquad = \int^{\infty}_s\big[\Delta H_r + \Delta Z_r\sigma^{-1}(r,X_r)h(r,X_r,u^1_r,\bar u^2(r,X_r,\bar Z^1_r,\bar Z^2_r))\big]\mathrm{d}r-\int^{\infty}_s\Delta Z_r\mathrm{d}W_r\\
            &\qquad =\int^{\infty}_s\Delta H_r\mathrm{d}r - \int^{\infty}_s\Delta Z_r\mathrm{d}W^{u^1,\bar u^2}_r.
        \end{aligned}
    \end{equation*}
    Similarly to proof of \eqref{eq49}, we obtain $\mathbb{E}^{u^1,\bar u^2}[\sup_{s\geq 0} |\int^{s}_0\Delta Z_r\mathrm{d}W^{u^1,\bar u^2}_r| ] < \infty$. This implies that $(\int^{s}_0\Delta Z_r\mathrm{d}W^{u^1,\bar u^2}_r)_{s\geq 0}$ is a uniformly integrable martingale. Furthermore, Assumption 4-(i) gives $\Delta H_s \geq 0$. Therefore,
    \begin{equation*}
        \Delta Y_s = \mathbb{E}^{u^1,\bar u^2}\left[\left.\int^{\infty}_s\Delta H_r\mathrm{d}r\right|\mathcal{F}_s\right]\geq 0, \quad s\geq 0.
    \end{equation*}
    By \Cref{le32}, for any $u^1_{\cdot} \in \mathcal{U}^1_{ad}$, we obtain
    \begin{equation*}
        J_{1}(u^1,\bar u^2) - J_{1}(\bar u^1,\bar u^2)  = Y^{u^1}_0 - \bar Y^i_0 = \Delta Y_0\geq 0.
    \end{equation*}
    A similar argument shows that, for any $u^2_{\cdot} \in \mathcal{U}^2_{ad}$,
    \begin{equation*}
        J_{2}(\bar u^1,u^2) - J_{2}(\bar u^1,\bar u^2)\geq 0.
    \end{equation*}
    This completes the proof.
\end{proof}

\begin{remark}
    \label{re64}
    Combining \Cref{th24} and \Cref{th33}, we further obtain a feedback representation of the optimal strategy in terms of $X_\cdot$. More precisely, there exist deterministic measurable functions $\Upsilon_1: [0,\infty)\times \mathbb{R}^n \to U_1$ and $\Upsilon_2: [0,\infty)\times \mathbb{R}^n \to U_2$ such that
    \begin{equation*}
        \bar u^1(s,X_s,\bar Z^1_s,\bar Z^2_s) = \Upsilon_1(s,X_s),\quad \bar u^2(s,X_s,\bar Z^1_s,\bar Z^2_s) = \Upsilon_2(s,X_s), \quad \mathrm{d}\mathbb{P} \otimes \mathrm{d}t\text{-a.e. on } \Omega \times [0,\infty).
    \end{equation*}
\end{remark}

\section{Conclusion}
\label{se7}

In this paper, we have established an analytical framework for multidimensional infinite-horizon BSDEs under weak regularity and have resolved the existence problem for their solutions. In contrast to existing theories that rely on Lipschitz regularity assumptions, the generator considered here is only required to satisfy continuity and polynomial growth conditions, without any quantitative control on differences.

First, we smooth and truncate the generator, and use uniform estimates for the approximating equations together with a common Markov representation to transfer the convergence analysis to the state space. We then establish a local $L^q$-domination framework suitable for the infinite-horizon setting and prove the Cauchy property of the approximating state functions as well as the strong convergence of the approximating solutions. Finally, combining the continuity of the generator with its approximation properties yields a solution to the corresponding BSDE. On this basis, we consider a class of infinite-horizon nonzero-sum differential games with a general non linear structure and reduce them to the above multidimensional infinite-horizon BSDEs under weak regularity conditions, thereby obtaining a Nash equilibrium for the corresponding game problem. However, when the game model no longer satisfies the continuity requirement in Assumption 4-(ii), the generator of the associated BSDE may be discontinuous with respect to $z$. Therefore, extending the existence results of this paper to multidimensional infinite-horizon BSDEs with discontinuous coefficients and investigating the corresponding nonzero-sum differential games constitute a problem worthy of consideration which we leave for future research.

\begin{appendices}
    \section{Proof of \Cref{le44}}
    \label[appendix]{app}
    Before proceeding with the proof, we introduce the relevant definitions and propositions. We follow Definition (2.1) in Chapter II of \cite{BG2007} for the notion of an $\alpha$-excessive function. For the reader's convenience, we restate the case $\alpha=0$ using the notation of this paper; such functions are referred to simply as excessive functions. First, for any $(t,a)\in[0,\infty)\times\mathbb R^n$ and $r\geq 0$, define the time--space process $\hat X_r^{t,a}:=(t+r,x_{t+r}^{t,a})$, where $x^{t,a}_\cdot$ is the solution to \eqref{eq01}. The definition of an excessive function is given below.
    \begin{definition}
        \label{appde01}
        A nonnegative Borel measurable function $V:[0,\infty)\times\mathbb R^n\to \mathbb R^m$ is called an excessive function for $\hat X_\cdot^{t,a}$ if, for any $(t,a)\in[0,\infty)\times\mathbb R^n$ and $r\geq 0$, $\mathbb E[V(\hat X_r^{t,a})]\le V(t,a)$ and $\lim_{r\to 0}\mathbb E[V(\hat X_r^{t,a})]=V(t,a)$ hold.
    \end{definition}

    We next present a useful proposition concerning Borel measurability.

    \begin{proposition}
        \label{apppro02}
        Let $\Psi: [0,\infty)\times \mathbb R^n \to \mathbb R$ be a Borel function such that, for any $(t,a)\in [0,\infty)\times \mathbb R^n$, $\mathbb E[\int_t^\infty |\Psi(s,x^{t,a}_s)|\mathrm{d}s]<\infty$. Then the function $u(t,a)=\mathbb E\int_t^\infty \Psi(s,x^{t,a}_s)\mathrm{d}s$ is Borel measurable from $[0,\infty)\times \mathbb R^n$ to $\mathbb R$.
    \end{proposition}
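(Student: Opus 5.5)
We have to show that $u(t,a)=\mathbb E\int_t^\infty \Psi(s,x^{t,a}_s)\,\mathrm ds$ is Borel. The plan is a monotone class argument, reducing everything to the joint measurability of the map $(t,a,s)\mapsto \mathbb E[g(s,x^{t,a}_s)]$ for bounded Borel $g$.

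\textbf{Reductions.} Write $\Psi=\Psi^+-\Psi^-$. Each part is integrable by hypothesis, so it suffices to treat $\Psi\ge 0$. For nonnegative $\Psi$, truncate via $\Psi_N:=(\Psi\wedge N)\mathbbm 1_{[0,N]}(s)$. Monotone convergence gives $u_N\uparrow u$ pointwise, and a pointwise limit of Borel functions is Borel, so it is enough to handle bounded Borel $\Psi$ supported in $s\le N$. For such $\Psi$, Fubini gives
\begin{equation*}
u(t,a)=\int_0^N \mathbbm 1_{\{s\ge t\}}\,\mathbb E[\Psi(s,x^{t,a}_s)]\,\mathrm ds .
\end{equation*}
Hence it suffices to show that $(t,a,s)\mapsto \mathbb E[\Psi(s,x^{t,a}_s)]$ is Borel on $[0,\infty)\times\mathbb R^n\times[0,\infty)$. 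Fubini–Tonelli for Borel integrands then gives Borel measurability in $(t,a)$.

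\textbf{Continuous case.} First take $\Psi$ bounded and continuous. Under Assumption 1 ($F$, $\sigma$ Lipschitz in $x$ and bounded), standard estimates give continuity of $(t,a,s)\mapsto x^{t,a}_s$ in $L^2(\Omega)$:
\begin{equation*}
\mathbb E|x^{t,a}_s-x^{t',a'}_{s'}|^2\le C_T\bigl(|a-a'|^2+|t-t'|+|s-s'|\bigr)
\end{equation*}
on compacts. Here we use the convention $x^{t,a}_s=a$ for $s\le t$, and we control the unbounded drift $Ax$ on bounded horizons by Gronwall together with \Cref{le01}. Consequently $\mathbb E[\Psi(s,x^{t,a}_s)]$ is continuous in $(t,a,s)$ by dominated convergence, because convergence in probability plus continuity and boundedness of $\Psi$ suffice. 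In particular it is Borel.

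\textbf{Extension to bounded Borel $\Psi$.} Let $\mathcal H$ be the class of bounded Borel $\Psi$ for which the map is Borel. $\mathcal H$ is a vector space containing constants and closed under bounded monotone limits, by monotone convergence. It contains the bounded continuous functions, which are closed under multiplication and generate the Borel $\sigma$-field. The functional monotone class theorem then gives $\mathcal H=$ all bounded Borel functions.

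\textbf{Main obstacle.} The delicate step is the joint continuity estimate, specifically its uniformity in $t$. When $t$ varies, the SDEs start at different times, so one must compare $x^{t,a}$ and $x^{t',a}$ on $[t\vee t',s]$ using the strong Markov/flow property. The increment $x^{t\wedge t',a}_{t\vee t'}-a$ is $O(|t-t'|^{1/2})$ in $L^2$, using the bounds on $F$ and $\sigma$ and the local boundedness of $Ax$ moments from \Cref{le01}. I will carry this out on each finite horizon $[0,N]$ only, which is all the truncation requires.
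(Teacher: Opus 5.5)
Your proposal is correct and follows essentially the same route as the paper. Both arguments run a functional monotone class argument whose base case, bounded continuous $\Psi$, rests on the $L^2$-continuity of the flow $(t,a)\mapsto x^{t,a}$ (the paper simply cites estimate (4.6) of \cite{EPQ1997}), then truncate via $(\Psi\wedge N)\mathbbm 1_{\{s\le N\}}$ with monotone convergence and split $\Psi=\Psi^+-\Psi^-$; the only cosmetic difference is that you apply the monotone class to the marginal map $(t,a,s)\mapsto\mathbb E[\Psi(s,x^{t,a}_s)]$ and then integrate in $s$ by Fubini, whereas the paper applies it directly to the time-truncated functional $u^\nu(t,a)$.
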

    \begin{proof} 
        Let $T \ge 1$ be fixed, and let ${\mathcal H}$ be the linear space of bounded Borel functions $\nu$ on $[0,\infty)\times \mathbb R^n$ such that the function $u^\nu$ defined by 
        \begin{equation*}
            u^\nu(t,a):=\mathbb E\int_t^\infty \nu(s,x^{t,a}_s)1_{\{s\le T\}}\mathrm{d}s
        \end{equation*}
        is Borel. The space $\mathcal{H}$ contains the constant function ${\bf 1}$; if a positive increasing sequence $\{\nu_i\}_{i\ge 1}$ belongs to $\mathcal{H}$ and converges to a bounded function $\psi$, then $\psi$ belongs to $\mathcal{H}$; finally, estimate (4.6) in (\cite{EPQ1997}, p.~44) shows that it contains the set ${\mathcal C}$ of bounded continuous functions on $[0,\infty)\times \mathbb R^n$, which is closed under pointwise multiplication. Therefore, it contains all bounded $\sigma(\mathcal C)$-measurable functions. Thus, if $\Psi$ is bounded and Borel measurable, then $u^\Psi$ is Borel. 
        
        Next, suppose that $\Psi$ is nonnegative and Borel measurable. For $i\ge 1$, set $\Psi_i:=\Psi\wedge i$. Then the function $u^{\Psi_i}$ defined by
        \begin{equation*}
            u^{\Psi_i}(t,a):=\mathbb E\int_t^\infty \Psi_i(s,x^{t,a}_s)\mathbbm{1}_{\{s\le i\}}\mathrm{d}s
        \end{equation*}
        is Borel. By the Lebesgue dominated convergence theorem, we deduce that $u^\Psi$ is Borel. Finally, for arbitrary $\Psi$, it suffices to consider $\Psi^+:=\Psi\vee 0$ and $\Psi^-:=(-\Psi)\vee 0$. 
    \end{proof}

    We now give the proof of \Cref{le44}.
    \begin{proof}[Proof of \Cref{le44}]
        We first establish the Markovian property of $\tilde y^{t,a}_\cdot$. By \Cref{apppro02}, $\gamma_1$ is Borel. Then, by Theorem 13.9 in \cite{BP2006}, for any $r\geq s\geq t$, the identity $x_r^{t,a} = x_r^{s, x_s^{t,a}}$ holds. Therefore, for any $(t,a) \in [0,\infty)\times \mathbb{R}^n$ and $s\geq t$, 
        \begin{equation}
            \label{eqap01}
            \begin{aligned}
                & \tilde y^{t,a}_s = \mathbb{E}\left[\left.\int^{\infty}_s\tilde f(r,x^{t,a}_r)\mathrm{d}r\right|\mathcal{F}_s\right]= \mathbb{E}\left[\left.\int^{\infty}_s\tilde f(r,x^{s,x^{t,a}_s}_r)\mathrm{d}r\right|\mathcal{F}_s\right]\\
                &\quad\text{ }\text{ }= \left.\mathbb{E}\int^{\infty}_s\tilde f(r,x^{s,a'}_r)\mathrm{d}r\right|_{a' = x^{t,a}_s} =\gamma_1(s,x^{t,a}_s).
            \end{aligned}
        \end{equation}
        Moreover, we define the Borel measurable functions $V_+,V_-:[0,\infty)\times \mathbb{R}^n \in \mathbb{R}^m$, whose components $V^i_+$ and $V^i_-$ are given by
        \begin{equation*}
            V^i_+ (t,a):=\mathbb E\int_t^\infty\big[\tilde f^i(r,x_r^{t,a})\vee 0\big]\mathrm d r,\quad V^i_- (t,a):=\mathbb E\int_t^\infty\big[(-\tilde f^i(r,x_r^{t,a}))\vee 0\big]\mathrm d r,\quad i=1,\cdots,m,
        \end{equation*}
        where $\tilde f^i$ is the component of $\tilde f$ indexed by $i$. By \Cref{appde01}, for each $1\leq i\leq m$, it is straightforward to verify that $V^i_+$ and $V^i_-$ are excessive functions. By Proposition 4.2 and Theorem 4.8 in Chapter II of \cite{BG2007}, we obtain that $V^i_+(\cdot,x^{t,a}_\cdot)$ and $V^i_-(\cdot,x^{t,a}_\cdot)$ are $\mathbb{P}$-a.s. right-continuous. Since, for any $(t,a) \in [0,\infty)\times \mathbb{R}^n$, $\gamma_1(t,a) = V_+(t,a)-V_-(t,a)$, the process $\gamma_1(\cdot,x^{t,a}_\cdot)$ is also $\mathbb{P}$-a.s. right-continuous. Combining \eqref{eqap01} with the continuity of $\tilde y^{t,a}_\cdot$, we obtain
        \begin{equation}
            \label{eqap15_1}
            \tilde y^{t,a}_s = \gamma_1(s,x^{t,a}_s),\quad \forall s\geq t,\quad \mathbb{P}\text{-a.s.} 
        \end{equation}

        Next, we establish the Markovian property of $\tilde z^{t,a}_\cdot$. For $k\geq 1$, it follows from \eqref{eq13_01} and \eqref{eqap15_1} that
        \begin{equation*}
            \tilde y^{t,a}_s =\gamma_1(k,x^{t,a}_k)+ \int^{k}_s \tilde f(r,x^{t,a}_r)\mathrm{d}r-\int^{k}_s\tilde z^{t,a}_r\mathrm{d}W_r, \quad t\leq s\leq k.
        \end{equation*}
        Applying Lemma 4.1 in \cite{EPQ1997} and Lemma A.2 in \cite{NX2022}, we obtain a Borel function $\xi_k: [0,k]\times \mathbb{R}^n \to \mathbb{R}^{m\times n}$ such that, for any $(t,a) \in [0,k]\times \mathbb{R}^n$,
        \begin{equation*}
            \tilde z^{t,a}_s=\xi_k(s,x^{t,a}_s),\quad \mathrm{d}\mathbb{P} \otimes \mathrm{d}t\text{-a.e. on } \Omega \times [t,k].
        \end{equation*}
        For any $(t,a)\in [0,\infty)\times \mathbb{R}^n$, define $\gamma_2(t,a):=\sum^{\infty}_{k=0}\mathbbm{1}_{\{k\leq t<k+1\}}\xi_{k+1}(t,a)$. Therefore, $\gamma_2$ is Borel and
        \begin{equation*}
            \tilde z^{t,a}_s = \gamma_2(s,x^{t,a}_s),\quad \mathrm{d}\mathbb{P} \otimes \mathrm{d}t\text{-a.e. on } \Omega \times [t,\infty).
        \end{equation*}
        This completes the proof. 
    \end{proof}

\end{appendices}

\section*{Statements and Declarations}

{\small
\noindent\textbf{Author Contributions}\enspace
All authors contributed equally to this work, including the conception and design of the study, and have read and approved the final manuscript. This work is carried out during the one-year visit of Hongyu Shi to Laboratoire Manceau de Math\'ematiques, Le Mans Universit\'e, France.\par

\medskip
\noindent\textbf{Conflict of interest}\enspace
The authors have no relevant financial or non-financial interests to disclose.\par
}


\begin{thebibliography}{99}

  \bibitem{A1967} Aronson, D.G.: Bounds for the fundamental solution of a parabolic equation. Bull. Amer. Math. Soc. \textbf{73}(6), 890--896 (1967). \url{https://doi.org/10.1090/S0002-9904-1967-11830-5}

  \bibitem{BG2007} Blumenthal, R.M., Getoor, R.K.: Markov Processes and Potential Theory. Dover Publications, Mineola (2007).

  \bibitem{BP2006} Bonaccorsi, S., Priola, E.: From Brownian Motion to Stochastic Differential Equations. Lecture notes, 10th Internet Seminar (2006). \url{https://www.math.tecnico.ulisboa.pt/~czaja/ISEM/10internetseminar200607.pdf}. Accessed 23 October 2006

  \bibitem{BCR2004} Buckdahn, R., Cardaliaguet, P., Rainer, C.: Nash equilibrium payoffs for nonzero-sum stochastic differential games. SIAM J. Control Optim. \textbf{43}(2), 624--642 (2004). \url{https://doi.org/10.1137/S0363012902411556}

  \bibitem{C2000} Chen, Z.: Existence and uniqueness for BSDE with stopping time. Chin. Sci. Bull. \textbf{43}(2), 96--99 (1998). \url{https://doi.org/10.1007/BF02883914}

  \bibitem{CJPS1980} \c{C}inlar, E., Jacod, J., Protter, P., Sharpe, M.J.: Semimartingales and Markov processes. Z. Wahrscheinlichkeitstheorie verw. Gebiete \textbf{54}(2), 161--219 (1980). \url{https://doi.org/10.1007/BF00531446}

  \bibitem{GJ1992} Da Prato, G., Zabczyk, J.: Stochastic Equations in Infinite Dimensions. Cambridge University Press, Cambridge (1992). \url{https://doi.org/10.1017/CBO9780511666223}

  \bibitem{D1973} Davis, M.H.: On the existence of optimal policies in stochastic control. SIAM J. Control \textbf{11}(4), 587--594 (1973). \url{https://doi.org/10.1137/0311045}

  \bibitem{EKH2003} El Karoui, N., Hamad\`{e}ne, S.: BSDEs and risk-sensitive control, zero-sum and nonzero-sum game problems of stochastic functional differential equations. Stochastic Process. Appl. \textbf{107}(1), 145--169 (2003). \url{https://doi.org/10.1016/S0304-4149(03)00059-0}

  \bibitem{EPQ1997} El Karoui, N., Peng, S., Quenez, M.C.: Backward stochastic differential equations in finance. Math. Finance \textbf{7}(1), 1--71 (1997). \url{https://doi.org/10.1111/1467-9965.00022}

  \bibitem{ED1999} Elliott, R.J., Davis, M.H.A.: Optimal play in a stochastic differential game. SIAM J. Control Optim. \textbf{19}(4), 543--554 (1981). \url{https://doi.org/10.1137/0319033}

  \bibitem{FJ2012} Fan, S.J., Jiang, L.: $L^p$ solutions of finite and infinite time interval BSDEs with non-Lipschitz coefficients. Stochastics \textbf{84}(4), 487--506 (2012). \url{https://doi.org/10.1080/17442508.2011.615933}

  \bibitem{F1988} Filippov, A.F.: Differential Equations with Discontinuous Righthand Sides. Kluwer Academic Publishers, Dordrecht (1988). \url{https://doi.org/10.1007/978-94-015-7793-9}

  \bibitem{FHT2009} Fuhrman, M., Hu, Y., Tessitore, G.: Ergodic BSDEs and optimal ergodic control in Banach spaces. SIAM J. Control Optim. \textbf{48}(3), 1542--1566 (2009). \url{https://doi.org/10.1137/07069849X}

  \bibitem{GP2022} Ghosh, M.K., Pradhan, S.: A nonzero-sum risk-sensitive stochastic differential game in the orthant. Math. Control Relat. Fields \textbf{12}(2), 343--370 (2022). \url{https://doi.org/10.3934/mcrf.2021025}

  \bibitem{GSK2005} Ghosh, M.K., Suresh Kumar, K.: A nonzero-sum stochastic differential game in the orthant. J. Math. Anal. Appl. \textbf{305}(1), 158--174 (2005). \url{https://doi.org/10.1016/j.jmaa.2004.11.002}

  \bibitem{GSPP2021} Ghosh, M.K., Suresh Kumar, K., Pal, C., Pradhan, S.: Nonzero-sum risk-sensitive stochastic differential games with discounted costs. Stoch. Anal. Appl. \textbf{39}(2), 306--326 (2021). \url{https://doi.org/10.1080/07362994.2020.1796707}

  \bibitem{GSPP2023} Ghosh, M.K., Suresh Kumar, K., Pal, C., Pradhan, S.: Nonzero-sum risk-sensitive stochastic differential games: A multi-parameter eigenvalue problem approach. Systems Control Lett. \textbf{172}, 105443 (2023). \url{https://doi.org/10.1016/j.sysconle.2022.105443}

  \bibitem{GT2020} Guatteri, G., Tessitore, G.: Ergodic BSDEs with multiplicative and degenerate noise. SIAM J. Control Optim. \textbf{58}(4), 2050--2077 (2020). \url{https://doi.org/10.1137/19M1292552}

  \bibitem{HL1995} Hamad\`{e}ne, S., Lepeltier, J.-P.: Zero-sum stochastic differential games and backward equations. Systems Control Lett. \textbf{24}(4), 259--263 (1995). \url{https://doi.org/10.1016/0167-6911(94)00011-J}

  \bibitem{HLP1997} Hamad\`{e}ne, S., Lepeltier, J.-P., Peng, S.: BSDEs with continuous coefficients and stochastic differential games. In: El Karoui, N., Mazliak, L. (eds.) Backward Stochastic Differential Equations. Pitman Research Notes in Mathematics Series, vol. 364, pp. 115--128. Longman, Harlow (1997).

  \bibitem{HM2015} Hamad\`{e}ne, S., Mu, R.: Existence of Nash equilibrium points for Markovian non-zero-sum stochastic differential games with unbounded coefficients. Stochastics \textbf{87}(1), 85--111 (2015). \url{https://doi.org/10.1080/17442508.2014.915973}

  \bibitem{HM2020} Hamad\`{e}ne, S., Mu, R.: Discontinuous Nash equilibrium points for nonzero-sum stochastic differential games. Stochastic Process. Appl. \textbf{130}(11), 6901--6926 (2020). \url{https://doi.org/10.1016/j.spa.2020.07.003}

  \bibitem{HM2021} Hamad\`{e}ne, S., Mu, R.: Risk-sensitive nonzero-sum stochastic differential game with unbounded coefficients. Dyn. Games Appl. \textbf{11}(1), 84--108 (2021). \url{https://doi.org/10.1007/s13235-020-00353-0}

  \bibitem{HLT2020} Hu, Y., Liang, G., Tang, S.: Systems of ergodic BSDEs arising in regime switching forward performance processes. SIAM J. Control Optim. \textbf{58}(4), 2503--2534 (2020). \url{https://doi.org/10.1137/18M1234783}

  \bibitem{JL2023} Jackson, J., Liang, G.: A new monotonicity condition for ergodic backward SDEs and ergodic control with superquadratic Hamiltonians. SIAM J. Control Optim. \textbf{61}(3), 1273--1296 (2023). \url{https://doi.org/10.1137/21M1460958}

  \bibitem{JLE2017} Jasso-Fuentes, H., L\'{o}pez-Barrientos, J.D., Escobedo-Trujillo, B.A.: Infinite-horizon non-zero-sum stochastic differential games with additive structure. IMA J. Math. Control Inform. \textbf{34}(1), 283--309 (2017). \url{https://doi.org/10.1093/imamci/dnv045}

  \bibitem{LXF2021} Li, T., Xu, Z., Fan, S.: General time interval multidimensional BSDEs with generators satisfying a weak stochastic-monotonicity condition. Probab. Uncertain. Quant. Risk \textbf{6}(4), 301--318 (2021). \url{https://doi.org/10.3934/puqr.2021015}

  \bibitem{LSY2021} Li, X., Shi, J., Yong, J.: Mean-field linear-quadratic stochastic differential games in an infinite horizon. ESAIM Control Optim. Calc. Var. \textbf{27}, 81 (2021). \url{https://doi.org/10.1051/cocv/2021078}

  \bibitem{LZF2025} Li, X., Zhang, Y., Fan, S.: Weighted solutions of random time horizon BSDEs with stochastic monotonicity and general growth generators and related PDEs. Stochastic Process. Appl. \textbf{190}, 104758 (2025). \url{https://doi.org/10.1016/j.spa.2025.104758}

  \bibitem{L2012} Lin, Q.: A BSDE approach to Nash equilibrium payoffs for stochastic differential games with nonlinear cost functionals. Stochastic Process. Appl. \textbf{122}(1), 357--385 (2012). \url{https://doi.org/10.1016/j.spa.2011.08.011}

  \bibitem{LLW2025} Luo, S., Li, X., Wei, Q.: Infinite time horizon stochastic recursive control problems with jumps: dynamic programming and stochastic verification theorems. SIAM J. Control Optim. \textbf{63}(2), 796--821 (2025). \url{https://doi.org/10.1137/24M1685080}

  \bibitem{M2004} Mannucci, P.: Nonzero-sum stochastic differential games with discontinuous feedback. SIAM J. Control Optim. \textbf{43}(4), 1222--1233 (2004). \url{https://doi.org/10.1137/S0363012903423715}

  \bibitem{NX2022} Nam, K., Xu, Y.: Coupled FBSDEs with measurable coefficients and its application to parabolic PDEs. J. Math. Anal. Appl. \textbf{515}(1), 126403 (2022). \url{https://doi.org/10.1016/j.jmaa.2022.126403}

  \bibitem{PP1990} Pardoux, E., Peng, S.G.: Adapted solution of a backward stochastic differential equation. Systems Control Lett. \textbf{14}(1), 55--61 (1990). \url{https://doi.org/10.1016/0167-6911(90)90082-6}

  \bibitem{RY1999} Revuz, D., Yor, M.: Continuous Martingales and Brownian Motion, 3rd edn. Springer, Berlin, Heidelberg (1999). \url{https://doi.org/10.1007/978-3-662-06400-9}

  \bibitem{ST2014} Sene, M., Thibault, L.: Regularization of dynamical systems associated with prox-regular moving sets. J. Nonlinear Convex Anal. \textbf{15}(4), 647--663 (2014).

  \bibitem{SK2008} Suresh Kumar, K.: Nonzero sum stochastic differential games with discounted payoff criterion: an approximating Markov chain approach. SIAM J. Control Optim. \textbf{47}(1), 374--395 (2008). \url{https://doi.org/10.1137/060650623}

  \bibitem{WLXZ2026} Wu, F., Li, X., Xiong, J., Zhang, X.: Stochastic linear-quadratic differential game with regime-switching in an infinite horizon. SIAM J. Control Optim. \textbf{64}(1), 175--210 (2026). \url{https://doi.org/10.1137/24M1688709}

  \bibitem{WLZ2026} Wu, F., Li, X., Zhang, X.: Two person non-zero-sum linear--quadratic differential game with Markovian jumps in infinite horizon. Automatica J. IFAC \textbf{187}, 112878 (2026). \url{https://doi.org/10.1016/j.automatica.2026.112878}

  \bibitem{ZH2016} Zong, Z., Hu, F.: Infinite time interval backward stochastic differential equations with continuous coefficients. SpringerPlus \textbf{5}(1), 1733 (2016). \url{https://doi.org/10.1186/s40064-016-3419-3}

\end{thebibliography}
\end{document}